\def\N{{{\Bbb N}}}
\def\Z{{{\Bbb Z}}}
\def\T{{{\Bbb T}}}
\def\R{{\Bbb R}}
\def\l{{\lambda }}
\def\a{{\alpha }}
\def\D{{\Delta }}
\def\a{{\alpha}}
\def\d{{\delta}}
\def\vp{{\varphi}}
\def\g{{\gamma }}
\def\w{{\omega }}
\def\L{{\mathcal{L} }}
\def\G{{\Gamma }}
\def\B{{\mathcal{B} }}
\def\La{{\Lambda }}
\def\I{{\mathcal{I} }}
\def\V{{\mathcal{V} }}
\def\Np{{\mathcal{N} }}
\def\Tp{{\mathcal{T} }}
\def\Pp{{\mathcal{P} }}
\def\B{{\mathcal{B} }}
\def\n{{\bm n}}
\def\k{{\bm k}}
\def\ll{{\bm l}}
\def\x{{\bm x}}
\def\){\right)}
\def\({\left(}
\def\spann{\operatorname{span}}
\numberwithin{equation}{section}
\newtheorem{corollary}{Corollary}[section]
\newtheorem{lemma}{Lemma}[section]
\newtheorem{theorem}{Theorem}[section]
\newtheorem{proposition}{Proposition}[section]
\newtheorem{remark}{Remark}[section]
\begin{document}

\title[On $L_p$-error of bivariate polynomial interpolation on the square]{On $L_p$-error of bivariate polynomial \\ interpolation on the square}

\author[Yurii
Kolomoitsev]{Yurii
Kolomoitsev$^{\text{a, b, 1, *}}$}
\address{Universit\"at zu L\"ubeck,
Institut f\"ur Mathematik,
Ratzeburger Allee 160,
23562 L\"ubeck}
\email{kolomoitsev@math.uni-luebeck.de, kolomus1@mail.ru}

\author[Tetiana
Lomako]{Tetiana
Lomako$^{\text{a, b, 2}}$}
\address{Institute of Applied Mathematics and Mechanics of NAS of Ukraine,
General Batyuk Str.~19, Slov’yans’k, Donetsk region, Ukraine, 84116}
\email{tlomako@yandex.ru}

\author[J\"urgen~Prestin]{J\"urgen~Prestin$^\text{a, 2}$}
\address{Universit\"at zu L\"ubeck,
Institut f\"ur Mathematik,
Ratzeburger Allee 160,
23562 L\"ubeck}
\email{prestin@math.uni-luebeck.de}

\thanks{$^\text{a}$Universit\"at zu L\"ubeck,
Institut f\"ur Mathematik,
Ratzeburger Allee 160,
23562 L\"ubeck, Germany}

\thanks{$^\text{b}$Institute of Applied Mathematics and Mechanics of NAS of Ukraine, General Batyuk Str.~19, Slov’yans’k, Donetsk region, Ukraine, 84116}

\thanks{$^1$Supported by H2020-MSCA-IF-2015 Project number 704030   (AFFMA: "\!Approximation of Functions and Fourier Multipliers and their applications").}

\thanks{$^2$Supported by   H2020-MSCA-RISE-2014 Project number 645672  (AMMODIT: "\!Approximation Methods for Molecular Modelling and Diagnosis Tools").}

\thanks{$^*$Corresponding author}

\thanks{E-mail address: kolomoitsev@math.uni-luebeck.de, kolomus1@mail.ru}

\date{\today}
\subjclass[2010]{41A05, 41A25, 42A15} \keywords{Interpolation, Lissajous-Chebyshev nodes, best approximation, weighted moduli of smoothness, bounded variation}

\begin{abstract}
We obtain estimates of the $L_p$-error of the bivariate polynomial interpolation on the Lissajous-Chebyshev node points for wide classes of functions including non-smooth functions of bounded variation in the sense of Hardy-Krause. The results show that $L_p$-errors of polynomial interpolation on the Lissajous-Chebyshev nodes have almost the same behavior as the polynomial interpolation in the case of the tensor product Chebyshev grid.
\end{abstract}

\maketitle

\section{Introduction}

Let $B(D)$ be the set of all real-valued bounded measurable functions $f$ on some non-empty set $D\subset \R^d$.
Denote $\k=(k_1,\dots,k_d)\in \Z_+^d$, $\n=(n_1,\dots,n_d)\in \N^d$, and $\x=(x_1,\dots,x_d)\in D$.
Let $X_\n=\{\x_\k\}_{\k\in \I_{\n}}$ be a  set of distinct points $\x_\k=(x_{1}^{(\k)},\dots, x_{d}^{(\k)})\in D$, where $\I_{\n}\subset \Z_+^d$ is the index set related to $X_\n$; let $\Pi_\n$ be some abstract space of polynomial functions  on $D$ (e.g. algebraic or trigonometric polynomials of order at most $\n$ in some sense).

The polynomial interpolation problem associated to $X_\n$ and $\Pi_\n$ is to find for each $f\in B(D)$ a polynomial $P\in \Pi_\n$ such that
\begin{equation}\label{Prob}
  P(\x_{\k})=f(\x_{\k})\quad\text{for all}\quad \k\in \I_{\n}.
\end{equation}
If this problem is solvable, then one can
construct the so-called Lagrange interpolation polynomial
\begin{equation*}\label{L}
  L(f,X_\n,\x)=\sum_{\k\in \I_{\n}}f(\x_\k)\ell_{\k}(X_\n,\x),
\end{equation*}
where $\ell_{\k}(X_\n,\x)\in \Pi_\n$ are  fundamental Lagrange
polynomials, i.e.
$
  \ell_{\k}(X_\n,\x_\ll)=\d_{\ll,\k}$ for all $\ll,\k\in \I_\n
$
($\d_{\ll,\k}$ is the Kronecker delta).
In this case, we have
\begin{equation}\label{L=f}
  L(f,X_\n,\x_\k)=f(\x_\k)\quad\text{for all}\quad \k\in \I_\n.
\end{equation}

Everywhere below we will suppose that~\eqref{Prob} is solvable and
\begin{equation}\label{LP}
 L(P,X_\n,\x)=P(\x)\quad\text{for all}\quad P\in \Pi_\n.
\end{equation}

One of the main problems in multivariate polynomial interpolation is the choice of a suitable set of node points $X_{\n}$,
which should provide "\!good" properties of the interpolation polynomials $L(f,X_\n,\x)$ depending on considered tasks.
In the bivariate
interpolation on $[-1,1]^2$, efficient node points were introduced by Morrow and Patterson~\cite{MP},
Xu~\cite{Xu1996} (see also~\cite{Ha}), Caliari, De Marchi, Vianello~\cite{CMV}.
The set of points introduced in~\cite{CMV} are known as the so-called Padua points.
It turned out that the  Padua points are a promising set of nodes for bivariate polynomial interpolation.
In particular, this point set allows unique interpolation in the appropriate space of polynomials; the Lebesgue constant of the interpolation problem on the Padua points grows only logarithmically with respect to the total degree of the interpolating polynomial; the Padua points can be characterized as a set of node points of a particular Lissajous curve~\cite{BDeMVXu2006}.
The last property can be applied in a new medical imaging technology called Magnetic
Particle Imaging. The main reason of this application is that the data acquisition path of the scanning device is commonly described by the Lissajous curves (see e.g.~\cite{KBSWGBB}). Under investigation of the bivariate polynomial interpolation of data values on these curves it was gained several extensions of the generating curve approach of the Padua points  in~\cite{E} and~\cite{ErbKaethnerAhlborgBuzug2015} (see also the survey~\cite{ErbKaethnerDenckerAhlborg2015}).

Let us recall that in the bivariate setting the Lissajous curve $\g_{\n}:\R\to J^2$, $J=[-1,1]$,  is given by
$$
\g_{\n}(t)=\(\cos(n_2t), \cos(n_1t)\),
$$
where $n_1$ and $n_2$ are relatively prime.
If we sample the curve $\g_{\n}$ along $n_1n_2+1$ equidistant points on $[0,\pi]$, then we get the set
$$
{\rm LC}_{\n}=\left\{\g_{\n}\(\frac{\pi k}{n_1n_2}\),\,k=0,\dots,n_1n_2\right\}
$$
called the Lissajous-Chebyshev node points of the (degenerate) Lissajous curve (see~\cite{E}). Note that in the case $n_2=n_1+1$ the Lissajous-Chebyshev node points ${\rm LC}_{\n}$ coincide with the Padua points ${\rm Pad}_{n_1}$ (see~\cite{CMV}, \cite{BDeMVXu2006}).

The multivariate Lagrange interpolation on the Lissajous-Chebyshev node points was studied in detail in the recent papers~\cite{E}, \cite{DE}, and~\cite{DEKL}. In particular, the following estimate of the error of approximation  by the interpolation polynomials $L(f,{\rm LC}_{\n},\cdot)$ was obtained in~\cite{DEKL}:
%
%
\noindent\emph{if $\frac{\partial^r}{\partial^rx_1} f$,  $\frac{\partial^s}{\partial^sx_2}f\in C(J^2)$, then}
\begin{equation}\label{int1}
  \Vert f-L(f,{\rm LC}_{\n},\cdot)\Vert_{L_\infty(J^2)}=\mathcal{O}(\log n_1\log n_2)\(\frac1{{n_1}^{r}}+\frac1{{n_2}^{s}}\).
\end{equation}

\smallskip

Note that the polynomial $L(f,{\rm LC}_{\n},\cdot)$ has total degree at most $n_1+n_2-1$. Precisely, it has degree $n_1-1$ in $x_1$ and degree $n_2$ in $x_2$ (see also Theorem~\ref{thErb} and Theorem~\ref{thDEKL} below).

Concerning approximation in the weighted $L_{p,w}(J^2)$ space with the Chebyshev weight $w(x_1,x_2)=1/{(\sqrt{1-x_1^2}\sqrt{1-x_2^2})^\frac1p}$, it is only known  convergence (see~\cite{E}):

\smallskip

\noindent\emph{if $f\in C(J^2)$ and $1\le p<\infty$, then}
\begin{equation}\label{int2}
\Vert f-L(f,{\rm LC}_{\n},\cdot)\Vert_{L_{p,w}(J^2)}\to 0\quad\text{\emph{as}}\quad\min\{n_1,n_2\}\to \infty.
\end{equation}

\smallskip

Results of qualitative type were considered only in the case of the Padua points (see~\cite{BDeMVXu2007}):

\smallskip

\noindent\emph{if $f\in C(J^2)$, then}
\begin{equation*}
  \Vert f-L(f,{\rm Pad}_{n},\cdot)\Vert_{L_{p,w}(J^2)}\le C_p E(f,\mathcal{P}_n^2)_\infty,
\end{equation*}
where $E(f,\mathcal{P}_n^2)_\infty$ is the error of the best approximation of $f$ by polynomials from $\mathcal{P}_n^2$ (bivariate algebraic polynomials of degree $n$ in two variables)  in the uniform metric.

Note that analogues of~\eqref{int1} and~\eqref{int2} for the Xu points, the Padua points,  and the Lissajous-Chebyshev node points of the non-degenerate Lissajous curve were derived in~\cite{BDeMVXu2006}, \cite{BDeMV2006}, \cite{Xu1996}, and~\cite{ErbKaethnerDenckerAhlborg2015}.

In this paper, we obtain estimates of the error $\Vert f-L(f,{\rm LC}_{\n},\cdot)\Vert_{L_{p,w}(J^2)}$ for wide classes of functions including non-smooth functions of bounded variation. In particular, we show that if a bivariate function $f$ has bounded variation in the sense of Hardy-Krause (see the definition in Section~4), then
\begin{equation}\label{int4}
\Vert f-L(f,{\rm LC}_{\n},\cdot)\Vert_{L_{p,w}(J^2)}=\mathcal{O}\(\frac1{{n_1}^{1/p}}+\frac1{{n_2}^{1/p}}\),
\end{equation}
see Corollary~\ref{corL1}.

A main idea of the proof of~\eqref{int4} is that using Marcinkiewicz-Zygmund-type inequalities and the fact that ${\rm LC}_{\n}$ is a subset of a certain set of points that forms the tensor product Chebyshev grid (see Figure~1), we can estimate the $L_p$-norm of the difference $f-L(f,{\rm LC}_{\n},\cdot)$ via
the $L_p$-norm of the error of approximation of $f$ by some tensor product interpolation polynomials (see Lemma~\ref{pass}). Then we can apply some known results for classical interpolation polynomials, e.g. \cite{Pr84}, \cite{PrT}, or  \cite{PrXu}.

The paper is organized as follows: In Section~2 we consider an abstract interpolation problem and obtain two types of estimates for the error of approximation of functions by interpolation polynomials. In Section~3 we give  preliminary results and some notations related to the polynomial interpolation on the Lissajous-Chebyshev nodes. In Section~4 we formulate the main results. In Section~5 we prove auxiliary and main results of the paper.

\section{General estimates of the error of approximation by interpolation polynomials}

Let $B_p(D)$, $1\le p<\infty$, be a subspace of $B(D)$ equipped with the norm  $\Vert f\Vert_p=\Vert f\Vert_{B_p(D)}$.  Everywhere below we suppose that the set of polynomials $\Pi_\n$ is a subset of $B_p(D)$ for any $1\le p<\infty$.

Let us consider generalized Marcinkiewicz-Zygmund (MZ) inequalities associated with a norm $\Vert \cdot\Vert_p$,
node points $X_\n$, weights $\La_\n=\{\l_\k\}_{\k\in \I_\n}\subset \R_+^d$, and a finite dimensional space of polynomials~$\Pi_\n$.

We distinguish the so-called left-hand side and right-hand side MZ inequalities.
In what follows, we say that the right-hand side MZ inequality associated with $(X_\n, \La_\n, \Pi_\n,\Vert \cdot\Vert_p)$ holds with a constant $M>0$ if
\begin{equation}\label{MZb}
  M\(\sum_{\k\in \I_\n} \l_\k |P(\x_\k)|^p\)^\frac1p\le \Vert P\Vert_p\quad\text{for all}\quad P\in \Pi_\n.
\end{equation}
By analogy, we say that the left-hand side MZ inequality associated with $(X_\n, \La_\n, \Pi_\n,\Vert \cdot\Vert_p)$ holds with a constant $M>0$ if
\begin{equation}\label{MZa}
\Vert P\Vert_p \le M\(\sum_{\k\in \I_\n} \l_\k |P(\x_\k)|^p\)^\frac1p \quad\text{for all}\quad P\in \Pi_\n.
\end{equation}

Below, the error of the best approximation of $f$ by elements of $\Pi_\n$ in the norm $\Vert \cdot\Vert_p$ is given by
$$
E(f,\Pi_\n)_p=\inf_{P\in \Pi_\n}\Vert f-P\Vert_p.
$$

Let us consider two arbitrary  polynomial operators $\L_\n : B_p(D)\mapsto \Pi_\n$ and $\L_\n' : B_p(D)\mapsto \Pi_\n'\subset B_p(D)$ of "interpolation type" such that for any $f\in B_p(D)$ one has
\begin{equation}\label{MainInt}
  \L_\n f(\x_\k)=\L_\n' f(\x_\k),\quad \x_\k\in X_\n.
\end{equation}
In the following theorem, using the above MZ inequalities, we obtain an estimate of $\Vert f-\L_\n f\Vert_p$ via the sum of $E(f,\Pi_\n)_p$ and the error of approximation of $f$ by
${\L}_\n' f$.

\begin{theorem}\label{th1}
  Let $f\in B_p(D)$, $1\le p< \infty$, $X_\n\subset {X}_\n'\subset D$, $\Pi_\n\subset {\Pi}_\n'$,
and let $\La_\n=\{\l_\k\}_{\k\in \I_\n}$ and ${\La}_\n'=\{{\l}_\k'\}_{\k\in {\I}_\n'}$ be such that
for some $\g>0$ one has $\l_\k\le \g{\l}_\k'$ for all $\k\in \I_\n$. Suppose also that equalities~\eqref{MainInt} hold.  If the left-hand side  MZ inequality associated with $(X_\n, \La_\n, \Pi_\n,\Vert \cdot\Vert_p)$ holds with a constant $M>0$ and the right-hand side MZ inequality associated with $(X_\n', \La_\n', \Pi_\n',\Vert \cdot\Vert_p)$ holds with a constant $M'>0$, then
\begin{equation}\label{eq4}
  \Vert f-\L_\n f\Vert_p\le \(\frac{\g  M}{M'}+1\) \(E(f,\Pi_\n)_p+\Vert f-{\L}_\n' f\Vert_p\).
\end{equation}
\end{theorem}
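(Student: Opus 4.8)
The plan is to interpolate between the two operators through a near-best approximating polynomial from $\Pi_\n$ and then to chain the two Marcinkiewicz--Zygmund inequalities across the common node set $X_\n$. First I would fix $Q\in\Pi_\n$ with $\|f-Q\|_p\le E(f,\Pi_\n)_p+\e$ (taking an exact best approximant if one exists, otherwise letting $\e\to0$ at the end) and split by the triangle inequality
\[
\|f-\L_\n f\|_p\le \|f-Q\|_p+\|Q-\L_\n f\|_p,
\]
so that the first summand is already controlled by $E(f,\Pi_\n)_p$ and the whole problem reduces to estimating $\|Q-\L_\n f\|_p$.

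Since $Q\in\Pi_\n$ and $\L_\n f\in\Pi_\n$, the difference $Q-\L_\n f$ lies in $\Pi_\n$, so the left-hand side MZ inequality \eqref{MZa} applies to it and gives
\[
\|Q-\L_\n f\|_p\le M\(\sum_{\k\in\I_\n}\l_\k\,|Q(\x_\k)-\L_\n f(\x_\k)|^p\)^{1/p}.
\]
At this point I would invoke the coincidence condition \eqref{MainInt} to replace $\L_\n f(\x_\k)$ by $\L_\n' f(\x_\k)$ for every $\k\in\I_\n$, so that each summand becomes $|Q(\x_\k)-\L_\n' f(\x_\k)|^p$. Using the weight comparison $\l_\k\le\g\l_\k'$ together with the inclusion $\I_\n\subset\I_\n'$ (which follows from $X_\n\subset X_\n'$), I would enlarge the sum to run over $\I_\n'$ with the weights $\l_\k'$, at the cost of a factor $\g^{1/p}$ pulled outside the $p$-th root.

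The final step is to observe that $Q-\L_\n' f\in\Pi_\n'$, because $Q\in\Pi_\n\subset\Pi_\n'$ and $\L_\n' f\in\Pi_\n'$; hence the right-hand side MZ inequality \eqref{MZb}, applied to this polynomial, bounds the discrete quantity above by $\tfrac1{M'}\|Q-\L_\n' f\|_p$. Combining these estimates yields
\[
\|Q-\L_\n f\|_p\le \frac{\g^{1/p}M}{M'}\,\|Q-\L_\n' f\|_p\le \frac{\g^{1/p}M}{M'}\(E(f,\Pi_\n)_p+\|f-\L_\n' f\|_p\),
\]
where the last inequality is again the triangle inequality $\|Q-\L_\n' f\|_p\le\|Q-f\|_p+\|f-\L_\n' f\|_p$. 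Adding back $\|f-Q\|_p\le E(f,\Pi_\n)_p$ and letting $\e\to0$ produces \eqref{eq4} (with $\g^{1/p}$ in place of $\g$, which is harmless since $\g^{1/p}\le\g$ whenever $\g\ge1$).

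I expect the crux to be the bookkeeping of which polynomial space each difference belongs to rather than any analytic difficulty: the left-hand inequality must be fed a polynomial from $\Pi_\n$ while the right-hand one must be fed a polynomial from $\Pi_\n'$, and it is precisely \eqref{MainInt} that lets one and the same vector of nodal values serve both. Notably, neither linearity of the operators nor the polynomial-reproducing property \eqref{LP} seems to be needed here; only the two MZ bounds, the coincidence at the nodes, and the inclusions $X_\n\subset X_\n'$, $\Pi_\n\subset\Pi_\n'$, $\l_\k\le\g\l_\k'$ enter. Once these pieces are aligned, everything else is a routine application of the triangle inequality.
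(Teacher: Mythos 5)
Your proposal is correct and follows essentially the same argument as the paper: split off a best approximant $P\in\Pi_\n$, apply the left-hand MZ inequality to $P-\L_\n f\in\Pi_\n$, swap $\L_\n f$ for $\L_\n' f$ at the nodes via~\eqref{MainInt}, enlarge the weighted sum to $\I_\n'$, and close with the right-hand MZ inequality applied to $P-\L_\n' f\in\Pi_\n'$. Your constant $\g^{1/p}$ is in fact the sharper (and strictly correct) form of the paper's factor $\g$, which as written is valid only for $\g\ge 1$; since $\Pi_\n$ is finite dimensional a best approximant exists, so your $\e$-hedge, while harmless, is unnecessary.
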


\begin{proof}
Let $P\in \Pi_\n$ be such that $\Vert f-P\Vert_p=E(f,\Pi_{\n})_p$. We have
\begin{equation}\label{eq5}
  \Vert f-\L_\n f\Vert_p\le \Vert f-P\Vert_p+\Vert P-\L_\n f\Vert_p.
\end{equation}
By~\eqref{MZa}, \eqref{MainInt}, and~\eqref{MZb}, we obtain
\begin{equation}\label{eq6}
  \begin{split}
    \Vert P-\L_\n f\Vert_p&\le M\(\sum_{\k\in \I_\n}\l_\k |P(\x_\k)-\L_\n f(\x_\k)|^p\)^\frac1p\\
&=M\(\sum_{\k\in \I_\n}\l_\k |P(\x_\k)-{\L}_\n' f(\x_\k)|^p\)^\frac1p\\
&\le \g M\(\sum_{\k\in {\I}_\n'}{\l}_\k' |P({\x}_\k')-{\L}_\n' f({\x}_\k')|^p\)^\frac1p\\
&\le \frac{\g  M}{M'} \Vert P-{\L}_\n' f\Vert_p\\
&\le \frac{\g  M}{M'}\(\Vert f-P\Vert_p+\Vert f-{\L}_\n' f\Vert_p\).
  \end{split}
\end{equation}

Finally, combining~\eqref{eq5} and~\eqref{eq6}, we get~\eqref{eq4}.
\end{proof}

In the next theorem, we estimate the error of approximation of a function $f$ by a particular interpolation process  $\L_\n f$ by means of the error of the best one-sided approximation given by
$$
\widetilde{E}(f,\Pi_{\n})_{p}=\inf\left\{\Vert Q_{\n}-q_{\n}\Vert_{p}\,:\, q_{\n}, Q_{\n} \in \Pi_{\n},\, q_{\n}(x)\le f(x)\le Q_{\n}(x),\, x\in D\right\}.
$$

\begin{theorem}\label{th2}
  Let $f\in B_p(D)$, $1\le p< \infty$, and $\L_\n f(\x)=L(f,X_\n,\x)$. If the right-hand side and the left-hand side MZ inequalities associated with $(X_\n, \La_\n, \Pi_\n,\Vert \cdot\Vert_p)$ hold with constants $M_1$ and $M_2$, correspondingly, then
\begin{equation}\label{eq7}
  \Vert f-\L_\n f\Vert_p\le \(\frac{M_2}{M_1}+1\) \widetilde{E}(f,\Pi_\n)_p.
\end{equation}
\end{theorem}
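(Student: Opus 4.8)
The plan is to exploit the fact that $\L_\n f=L(f,X_\n,\cdot)$ reproduces the sampled values of $f$ at the nodes, combined with the two-sided structure of $\widetilde{E}(f,\Pi_\n)_p$. First I would fix $\e>0$ and choose $q_\n,Q_\n\in\Pi_\n$ with $q_\n(x)\le f(x)\le Q_\n(x)$ on $D$ and $\Vert Q_\n-q_\n\Vert_p\le\widetilde{E}(f,\Pi_\n)_p+\e$ (letting $\e\to0$ at the end, since the infimum need not be attained). The backbone is then the triangle inequality
\[
\Vert f-\L_\n f\Vert_p\le \Vert f-q_\n\Vert_p+\Vert q_\n-\L_\n f\Vert_p,
\]
and I would estimate the two summands separately.

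For the first summand, since $0\le f-q_\n\le Q_\n-q_\n$ pointwise, the monotonicity of the $L_p$-type norm on $B_p(D)$ under pointwise domination gives $\Vert f-q_\n\Vert_p\le\Vert Q_\n-q_\n\Vert_p$. For the second summand I would observe that $q_\n-\L_\n f\in\Pi_\n$ (both terms lie in the linear space $\Pi_\n$) and apply the left-hand side inequality~\eqref{MZa} with constant $M_2$, passing to the weighted discrete sum over the nodes. Here the interpolation identity~\eqref{L=f}, $\L_\n f(\x_\k)=f(\x_\k)$, is decisive: at each node $q_\n(\x_\k)-\L_\n f(\x_\k)=q_\n(\x_\k)-f(\x_\k)$, and the one-sided bounds $q_\n(\x_\k)\le f(\x_\k)\le Q_\n(\x_\k)$ yield $|q_\n(\x_\k)-f(\x_\k)|\le Q_\n(\x_\k)-q_\n(\x_\k)$. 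Replacing each term of the sum by the corresponding value of $Q_\n-q_\n$ and then applying the right-hand side inequality~\eqref{MZb} with constant $M_1$ to $Q_\n-q_\n\in\Pi_\n$ converts the discrete sum back into $\tfrac1{M_1}\Vert Q_\n-q_\n\Vert_p$, whence $\Vert q_\n-\L_\n f\Vert_p\le\tfrac{M_2}{M_1}\Vert Q_\n-q_\n\Vert_p$.

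Combining the two estimates gives $\Vert f-\L_\n f\Vert_p\le(1+M_2/M_1)\Vert Q_\n-q_\n\Vert_p\le(1+M_2/M_1)(\widetilde{E}(f,\Pi_\n)_p+\e)$, and letting $\e\to0$ finishes the proof. The step I expect to require the most care is the passage through the node values: the entire gain over Theorem~\ref{th1} comes from the fact that, because $\L_\n f$ interpolates $f$ exactly, the error at the nodes is controlled by the gap $Q_\n-q_\n$ of a single two-sided envelope rather than by a best approximant, and this is exactly what allows $\widetilde{E}(f,\Pi_\n)_p$ to replace $E(f,\Pi_\n)_p+\Vert f-\L_\n' f\Vert_p$. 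I would also state explicitly the monotonicity of $\Vert\cdot\Vert_p$ under pointwise domination, as this is the one structural property of $B_p(D)$ beyond the MZ inequalities on which the argument rests.
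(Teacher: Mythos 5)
Your proof is correct and follows essentially the same route as the paper: the same triangle-inequality decomposition through $q_\n$, the same use of the interpolation identity $\L_\n f(\x_\k)=f(\x_\k)$ at the nodes, and the same chaining of the left-hand and right-hand MZ inequalities to bound $\Vert q_\n-\L_\n f\Vert_p$ by $\tfrac{M_2}{M_1}\Vert Q_\n-q_\n\Vert_p$. Your only additions --- the $\e$-argument covering a possibly unattained infimum and the explicit appeal to monotonicity of $\Vert\cdot\Vert_p$ under pointwise domination --- are minor refinements of steps the paper takes for granted (it simply picks an extremal pair $q_\n,Q_\n$ and calls the first estimate obvious).
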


\begin{proof}
Let $q_{\n}, Q_{\n} \in \Pi_{\n}$ be such that $q_{\n}(x)\le f(x)\le Q_{\n}(x)$ and $\Vert Q_{\n} - q_{\n}\Vert_p = \widetilde{E}(f,\Pi_{\n})_p$. We have
\begin{equation}\label{eq8}
  \Vert f-\L_\n f\Vert_p\le \Vert q_\n -f\Vert_p+\Vert q_\n-\L_\n f\Vert_p.
\end{equation}
It is obvious that
\begin{equation}\label{eq9}
  \Vert q_\n -f\Vert_p\le \Vert q_\n -Q_\n\Vert_p= \widetilde{E}(f,\Pi_\n)_p.
\end{equation}
Let us estimate the second term in the right-hand side of~\eqref{eq8}. By~\eqref{MZb} and~\eqref{MZa}, we derive
\begin{equation}\label{eq10}
  \begin{split}
    \Vert q_\n-\L_\n f\Vert_p&\le M_2\(\sum_{\k\in \I_\n}\l_\k|q_\n(\x_\k)-\L_\n f(\x_\k)|^p\)^\frac1p\\
    &= M_2\(\sum_{\k\in \I_\n}\l_\k|q_\n(\x_\k)-f(\x_\k)|^p\)^\frac1p\\
    &\le M_2\(\sum_{\k\in \I_\n}\l_\k|q_\n(\x_\k)-Q_\n(\x_\k)|^p\)^\frac1p\\
    &\le \frac{M_2}{M_1}\Vert q_\n-Q_\n\Vert_p= \frac{M_2}{M_1} \widetilde{E}(f,\Pi_\n)_p.
  \end{split}
\end{equation}
Thus, combining~\eqref{eq8}--\eqref{eq10}, we get~\eqref{eq7}.
\end{proof}

\section{Preliminary results and some notations}

In this section, we give some notations and preliminary results related to the bivariate interpolation on the Lissajous-Chebyshev node points.

Let $\Np^2=\{(m,n)\in \N^2\,:\,  m\,\, \text{and}\,\, n\,\, \text{are relatively prime}\}$.
In what follows, for simplicity, we use the notation
\begin{equation}\label{xy}
  x_k=\cos\(\frac{k\pi}{m}\),\quad y_l=\cos\(\frac{l\pi}{n}\), \quad k=0,\dots,m, \quad l=0,\dots,n,
\end{equation}
to abbreviate the Chebyshev-Gauss-Lobatto points.

Recall that for each $(m,n)\in \Np^2$, the Lissajous curve $\g_{m,n}:\R\to J^2$ is given by
$$
\g_{m,n}(t)=\(\cos(nt), \cos(mt)\)
$$
and the Lissajous-Chebyshev node points of the (degenerate) Lissajous curve are defined by
$$
{\rm LC}_{m,n}=\left\{\g_{m,n}\(\frac{\pi k}{mn}\),\,k=0,\dots,mn\right\}
$$
(see Figure 1).

\begin{figure}[ht]
\includegraphics[scale=0.4, clip]{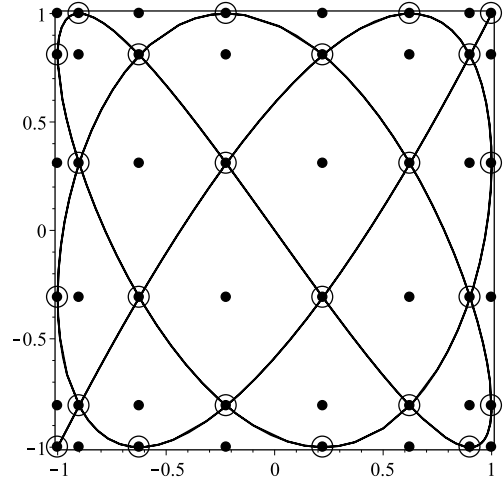}
	\caption{Lissajous curve $\gamma_{7,5}$; points of ${\rm LC}_{7,5}$ (big circles); points of the tensor product Chebyshev grid $\left\{(x_i,y_j): i=0,\dots,7,\, j=0,\dots,5 \right\}$ (small solid circles).}
\end{figure}

\bigskip

\begin{proposition}\label{prop_int} { (See~\cite{E}).}
  Let $({m,n})\in \Np^2$.
The set ${\rm LC}_{m,n}$ contains  $(m+1)(n+1)/2$ distinct points and can be represented in the following form
  $$
  {\rm LC}_{m,n}=\left\{(x_i,y_j): (i,j)\in \I_{m,n} \right\},
  $$
where
   $$
  \I_{m,n}=\left\{(i,j)\in \Z_+^2: i=0,\dots,m,\, j=0,\dots,n,\, i+j=0\pmod 2\right\}.
  $$
\end{proposition}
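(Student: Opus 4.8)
The plan is to analyze the map $k \mapsto \gamma_{m,n}(\pi k/(mn))$ directly and show both that its image lands in the tensor product grid with the stated parity constraint, and that the fiber structure produces exactly $(m+1)(n+1)/2$ distinct points. First I would compute the coordinates explicitly: for $k \in \{0,\dots,mn\}$ we have
\begin{equation*}
  \gamma_{m,n}\(\frac{\pi k}{mn}\)=\(\cos\(\frac{n\pi k}{mn}\),\cos\(\frac{m\pi k}{mn}\)\)=\(\cos\(\frac{\pi k}{m}\),\cos\(\frac{\pi k}{n}\)\).
\end{equation*}
The key observation is that $\cos(\pi k/m)$ depends only on the residue $k \bmod 2m$ up to the reflection $k\mapsto 2m-k$, and similarly for the second coordinate with $n$. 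So I want to identify, for each admissible pair of indices $(i,j)$, how many and which $k$ hit the point $(x_i,y_j)$, where $x_i,y_j$ are the Chebyshev-Gauss-Lobatto points from~\eqref{xy}.

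The central step is to determine which $(i,j)$ are attainable. Since $\cos(\pi k/m)=\cos(\pi i/m)$ forces $k\equiv \pm i \pmod{2m}$, and likewise $\cos(\pi k/n)=\cos(\pi j/n)$ forces $k\equiv \pm j\pmod{2n}$, I would fix a target $(i,j)$ with $0\le i\le m$, $0\le j\le n$ and ask for the existence of $k\in\{0,\dots,mn\}$ satisfying a compatible pair of sign choices in these two congruences simultaneously. Because $\gcd(m,n)=1$, the Chinese Remainder Theorem guarantees that for each of the four sign combinations the system $k\equiv \pm i\pmod{2m}$, $k\equiv \pm j\pmod{2n}$ has a unique solution modulo $2mn$. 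The parity condition $i+j\equiv 0\pmod 2$ is what I expect to emerge as the solvability criterion: I would show that $k\equiv \pm i\pmod{2m}$ and $k\equiv \pm j\pmod{2n}$ force $k\equiv i\pmod 2$ and $k\equiv j\pmod 2$ (since $2m,2n$ are even, $k$ and $i$ share parity, and $k$ and $j$ share parity), so any attainable $(i,j)$ must satisfy $i\equiv j\pmod 2$, i.e. $i+j$ even; conversely, when $i+j$ is even one checks that a consistent residue $k\bmod 2$ exists and the CRT solution can be taken in the correct range.

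Having pinned down the index set as $\I_{m,n}$, I would finish by a counting argument to confirm the cardinality $(m+1)(n+1)/2$, and to verify that distinct admissible $(i,j)$ give distinct grid points (which is immediate, since the $x_i$ are distinct for $0\le i\le m$ and the $y_j$ are distinct for $0\le j\le n$, so the map $(i,j)\mapsto(x_i,y_j)$ is injective on $\I_{m,n}$). The count of pairs $(i,j)$ with $0\le i\le m$, $0\le j\le n$, $i+j$ even, is exactly half of the $(m+1)(n+1)$ total pairs when one of $m,n$ is even and the total is odd; here since $\gcd(m,n)=1$ exactly one of $m,n$ is even (they cannot both be even, and if both were odd then $(m+1)(n+1)$ would be even but the parity count would still need checking), so $(m+1)(n+1)$ is even and the even-sum pairs number precisely $(m+1)(n+1)/2$. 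I would make this last combinatorial identity precise with a short parity bookkeeping over the rows $i=0,\dots,m$.

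The main obstacle I anticipate is the bookkeeping in the surjectivity/injectivity onto the $k$-indices: I must ensure that the $mn+1$ values of $k$ map \emph{onto} all of $\I_{m,n}$ without overcounting, i.e. that each admissible grid point is hit, and that the endpoints $k=0$ and $k=mn$ (which are fixed points of the reflection symmetry) are handled correctly rather than double-counted. Concretely, the $mn+1$ parameter values must collapse, under the two-fold reflection symmetry in each coordinate, onto the $(m+1)(n+1)/2$ points; tracking exactly which $k$ coincide and confirming the fiber sizes sum correctly to $mn+1$ is the delicate part, and is precisely where $\gcd(m,n)=1$ does the essential work through the CRT.
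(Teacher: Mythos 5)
Your overall strategy is sound, and it is essentially the standard argument: the paper itself gives no proof of this proposition, citing Erb's work~\cite{E}, where the node set is analyzed through precisely the identity $\gamma_{m,n}\left(\frac{\pi k}{mn}\right)=\left(\cos\frac{\pi k}{m},\cos\frac{\pi k}{n}\right)$ and the congruences $k\equiv\pm i\pmod{2m}$, $k\equiv\pm j\pmod{2n}$. However, two of your intermediate claims are wrong as stated. First, the plain Chinese Remainder Theorem does not apply to the moduli $2m$ and $2n$: they are not coprime, since $\gcd(2m,2n)=2$. The system $k\equiv\pm i\pmod{2m}$, $k\equiv\pm j\pmod{2n}$ is solvable if and only if the two residues agree modulo $2$, i.e. if and only if $i\equiv j\pmod 2$, and then the solution is unique modulo $\operatorname{lcm}(2m,2n)=2mn$. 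This is not cosmetic: if the plain CRT held as you assert ("for each of the four sign combinations \dots a unique solution modulo $2mn$"), then \emph{every} pair $(i,j)$ would be attainable and the parity constraint defining $\I_{m,n}$ could never arise. The parity condition is exactly the compatibility condition of the generalized CRT, which your mod-$2$ analysis correctly identifies, so the repair is to phrase the entire step as the generalized CRT rather than asserting unconditional solvability first and deriving parity separately.

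Second, your claim that coprimality forces "exactly one of $m,n$ even" is false: both may be odd, as in the paper's own Figure~1 with $(m,n)=(7,5)$. Coprimality gives only that not both are even, i.e. at least one is odd, and that is all the count needs: if, say, $n$ is odd, then each row $i$ contains exactly $(n+1)/2$ indices $j$ with $j\equiv i\pmod 2$, so $\I_{m,n}$ has $(m+1)(n+1)/2$ elements; the symmetric row count handles the case $m$ odd. Finally, the range issue you flag as the delicate obstacle has a one-line resolution you should make explicit: the solutions for the sign patterns $(+,+)$ and $(-,-)$ are reflections of one another under $k\mapsto 2mn-k$ modulo $2mn$, so whenever the system is solvable at least one solution lies in $\{0,\dots,mn\}$. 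This proves every $(i,j)\in\I_{m,n}$ is attained; combined with the injectivity of $(i,j)\mapsto(x_i,y_j)$ (the $x_i$ and $y_j$ are strictly decreasing in $i$ and $j$), the cardinality statement follows, and there is no need to carry out the full fiber-size bookkeeping over all $mn+1$ values of $k$ that you anticipated.
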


Further, to study the interpolation problem on ${\rm LC}_{m,n}$, we need to introduce some additional notations.
Let
$$
C_n(x)=\cos(n \arccos x)
$$
be the Chebyshev polynomial of the first kind. The normalized Chebyshev polynomial is defined by
$$
\hat{C}_n(x)=\left\{
               \begin{array}{ll}
                 1, & \hbox{$n=0$,} \\
                 \sqrt{2}C_n(x), & \hbox{$n\neq 0$.}
               \end{array}
             \right.
$$

A proper set of polynomials for interpolation on ${\rm LC}_{m,n}$ is given by
\begin{equation*}
  \Pi_{m,n}^\mathcal{L}={\rm span}\{C_{k}(x)C_{l}(y)\,:\, (k,l)\in \Gamma_{m,n}^\mathcal{L}\},
\end{equation*}
where
$$
\G_{m,n}^\L=\left\{(i,j)\in \Z_+^2 : \frac{i}{m}+\frac{j}{n}<1\right\} \cup \{(0,n)\}.
$$

To construct the Lagrange interpolation polynomial corresponding to the sets ${\rm LC}_{m,n}$ and  $\Pi_{m,n}^\mathcal{L}$, we will use the following weights
\begin{equation}\label{lll}
  \l_{i,j}:=\left\{
        \begin{array}{ll}
          \displaystyle1/{(2mn)}, & \hbox{$(x_i,y_j)$ is a vertex point of $J^2$,} \\
          \displaystyle1/(mn), & \hbox{$(x_i,y_j)$ is an edge point of $J^2$,} \\
          \displaystyle2/(mn), & \hbox{$(x_i,y_j)$ is an interior point of $J^2$.}
        \end{array}
      \right.
\end{equation}

Let us present two basic theorems for our further consideration.

\begin{theorem}\label{thErb} (See~\cite{E}).
For $f\in B(J^2)$, the unique solution of the interpolation problem
\begin{equation*}
  \L_{m,n}(f)(x_k,y_l)=f(x_k,y_l),\quad  (x_k,y_l)\in {\rm LC}_{m,n},
\end{equation*}
in the space $\Pi_{n,m}^\mathcal{L}$ is given by the polynomial
\begin{equation}\label{pol}
  \L_{m,n}f(x,y)=\sum_{(k,l)\in \I_{m,n}}f(x_k,y_l)\ell_{m,n}(x,y;x_k,y_l),
\end{equation}
where
  \begin{equation}\label{E1}
  \begin{split}
    \ell_{m,n}(x,y;x_k,y_l)&=\l_{k,l}\(\sum_{(i,j)\in \Gamma_{m,n}^\mathcal{L}} \hat{C}_{i}(x_k)\hat{C}_{j}(y_l)\hat{C}_{i}(x)\hat{C}_{j}(y)-\frac12 \hat{C}_{n}(y_l)\hat{C}_{n}(y)\).
  \end{split}
  \end{equation}
These polynomials form the fundamental polynomials of Lagrange interpolation in the space $\Pi_{m,n}^\mathcal{L}$ with respect to the point set ${\rm LC}_{m,n}$, i.e.
  $
  \ell_{m,n}(x_{k'},y_{l'};x_k,y_l)=\delta_{(k,l),(k',l')}.
  $
\end{theorem}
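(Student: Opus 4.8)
The plan is to derive the whole statement from a single discrete orthogonality relation for the tensor-product Chebyshev polynomials on ${\rm LC}_{m,n}$. Introduce the discrete inner product
$$
\langle F, G\rangle_{m,n} = \sum_{(k,l)\in \I_{m,n}} \l_{k,l}\, F(x_k,y_l)\, G(x_k,y_l),
$$
with the weights $\l_{k,l}$ from~\eqref{lll}. The point is that the expression in~\eqref{E1} is exactly $\l_{k,l}$ times the reproducing kernel of $\Pi_{m,n}^\mathcal{L}$ for this inner product, evaluated at the node $(x_k,y_l)$; indeed, for an orthonormal family $\{e_\a\}$ the interpolation projector reads $\L_{m,n}f=\sum_{(k,l)}f(x_k,y_l)\,\l_{k,l}\sum_\a e_\a(x_k,y_l)e_\a(x,y)$, whose fundamental polynomial is $\l_{k,l}\sum_\a e_\a(x_k,y_l)e_\a(x,y)$. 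Thus everything reduces to showing that $\{\hat{C}_i(x)\hat{C}_j(y):(i,j)\in \G_{m,n}^\L\}$ is orthonormal with respect to $\langle \cdot,\cdot\rangle_{m,n}$, up to the one correction carried by the subtracted term $-\tfrac12\hat{C}_n(y_l)\hat{C}_n(y)$.

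To compute the Gram sums I would first record the classical one-dimensional discrete orthogonality of the Chebyshev polynomials at the Gauss--Lobatto abscissae $x_k=\cos(k\pi/m)$: the sum $\sum_{k=0}^{m}{}''C_i(x_k)C_{i'}(x_k)$ equals $m$ when $i=i'\in\{0,m\}$, equals $m/2$ when $i=i'\notin\{0,m\}$, and vanishes for $i\ne i'$ (with $0\le i,i'\le m$ and the endpoints halved). The device for descending from the full tensor grid to the checkerboard set $\I_{m,n}$ is the identity $(-1)^{k+l}=C_m(x_k)\,C_n(y_l)$, valid since $C_m(x_k)=\cos(k\pi)=(-1)^k$ and likewise for $y_l$. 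Because $(k,l)\in\I_{m,n}$ exactly when $k+l$ is even, this rewrites any weighted sum over $\I_{m,n}$ as one half of the corresponding sum over $\{0,\dots,m\}\times\{0,\dots,n\}$ of $\bigl(1+C_m(x_k)C_n(y_l)\bigr)$ times the integrand, with the boundary halving already encoded in $\l_{k,l}$.

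With this reformulation the Gram sum $\langle \hat{C}_i\hat{C}_j,\hat{C}_{i'}\hat{C}_{j'}\rangle_{m,n}$ splits into two pieces. The first is a tensor product of two one-dimensional orthogonality sums and contributes $\delta_{i,i'}\delta_{j,j'}$ after the $\hat{C}$-normalisation is accounted for. The second carries the extra factor $C_m(x_k)C_n(y_l)$; applying the product-to-sum formula $C_i(x)C_m(x)=\tfrac12(C_{m+i}(x)+C_{m-i}(x))$ in each variable converts it into one-dimensional orthogonality sums at the shifted indices $m\pm i$ and $n\pm j$. For pairs in the interior of $\G_{m,n}^\L$, i.e. with $\tfrac{i}{m}+\tfrac{j}{n}<1$, these shifted indices lie outside the admissible range, so the second piece either vanishes or only reproduces already-counted terms; the sole surviving contribution is the one localised at the added point $(0,n)$, and it is cancelled precisely by the subtracted term $-\tfrac12\hat{C}_n(y_l)\hat{C}_n(y)$. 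Tracking these aliased indices near the hypotenuse $\tfrac{i}{m}+\tfrac{j}{n}=1$ and checking that the single leftover is exactly the $(0,n)$ term is the step I expect to be the main obstacle.

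Once orthonormality is established, the Kronecker-delta property $\ell_{m,n}(x_{k'},y_{l'};x_k,y_l)=\delta_{(k,l),(k',l')}$ follows by substituting $(x,y)=(x_{k'},y_{l'})$ into~\eqref{E1} and reading off the reproducing property at the nodes. Every basis function occurring in~\eqref{E1}, including the isolated $\hat{C}_n(y)$ attached to $(0,n)$, has its index in $\G_{m,n}^\L$, so each $\ell_{m,n}(\cdot;x_k,y_l)$ lies in $\Pi_{m,n}^\mathcal{L}$ and~\eqref{pol} solves the interpolation problem. For uniqueness I would count $|\G_{m,n}^\L|$, the lattice points strictly under $\tfrac{i}{m}+\tfrac{j}{n}=1$ together with the single point $(0,n)$, and verify that it equals $(m+1)(n+1)/2$, which by Proposition~\ref{prop_int} is the number of nodes; the matching dimension together with the delta property forces the interpolant in $\Pi_{m,n}^\mathcal{L}$ to be unique.
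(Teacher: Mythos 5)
Note first that the paper contains no proof of Theorem~\ref{thErb}: it is imported verbatim from~\cite{E}. Your proposal reconstructs essentially the argument of that source — discrete orthogonality of the tensor Chebyshev basis with respect to the cubature weights~\eqref{lll}, the reproducing-kernel form of the fundamental polynomials, and a dimension count for unisolvence — and the outline is sound, including the key reduction $\sum_{(k,l)\in\I_{m,n}}\l_{k,l}F(x_k,y_l)=\frac1{mn}\sum_{k}\sum_{l}\varepsilon_k\varepsilon_l\bigl(1+C_m(x_k)C_n(y_l)\bigr)F(x_k,y_l)$ (full grid, endpoint terms halved via $\varepsilon_k,\varepsilon_l\in\{\tfrac12,1\}$) and the dimension count $|\G_{m,n}^\L|=(m+1)(n+1)/2$, for which you should make explicit that $\gcd(m,n)=1$ is what guarantees the segment $ni+mj=mn$ contains no lattice points other than $(m,0)$ and $(0,n)$, so that the symmetry $(i,j)\mapsto(m-i,n-j)$ halves the grid.

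However, your predicted mechanism for the correction term is wrong. The aliased piece carrying $C_m(x_k)C_n(y_l)$ is nonzero only if $i+i'=m$ together with $j+j'=n$ or $\{j,j'\}=\{0,n\}$ (the options $|i-i'|=m$, and $j=n$ for a point other than $(0,n)$, are excluded by the index ranges). For two indices strictly below the hypotenuse, $i+i'=m$ and $j+j'=n$ would force the sum of the two slacks $1-\tfrac{i}{m}-\tfrac{j}{n}$ to vanish, a contradiction; and a pair involving $(0,n)$ would force $i'=m\notin\G_{m,n}^\L$. Hence the second piece vanishes \emph{identically} on $\G_{m,n}^\L\times\G_{m,n}^\L$: there is no ``sole surviving contribution localised at $(0,n)$'' for the subtracted term to cancel. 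What $-\tfrac12\hat{C}_n(y_l)\hat{C}_n(y)$ actually compensates sits in the plain tensor piece: since $\hat{C}_n(y_l)=\sqrt2(-1)^l$, one gets $\frac1n\sum_{l}\varepsilon_l\,\hat{C}_n(y_l)^2=2$, so $\langle \hat{C}_0\hat{C}_n,\hat{C}_0\hat{C}_n\rangle_{m,n}=2$ while all other Gram entries are $\delta_{i,i'}\delta_{j,j'}$ (no analogous anomaly in $x$, as $i=m$ never occurs in $\G_{m,n}^\L$); the kernel therefore needs coefficient $\tfrac12=1/\langle\hat{C}_0\hat{C}_n,\hat{C}_0\hat{C}_n\rangle_{m,n}$ on the $(0,n)$ term, exactly as in~\eqref{E1}. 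Finally, to avoid circularity in your last step, deduce from the identity Gram matrix that $\langle\cdot,\cdot\rangle_{m,n}$ is positive definite on $\Pi_{m,n}^\mathcal{L}$, hence evaluation at the nodes is injective and, by the matching dimension, bijective; this yields uniqueness of the interpolant and then $\ell_{m,n}(x_{k'},y_{l'};x_k,y_l)=\delta_{(k,l),(k',l')}$, rather than reading the delta property directly off the reproducing identity.
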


The next theorem was obtained in~\cite{DEKL}. In what follows for $r,s\in \Z_+$, we denote
$$
f^{(r,s)}(x,y)=\frac{\partial^{r+s}f}{\partial x^{r}\partial y^{s}}(x,y).
$$
\begin{theorem}\label{thDEKL}
  For $f\in C(J^2)$ and $(m,n)\in \Np^2$, we have
\begin{equation*}
  \Vert f-\L_{m,n}f\Vert_{L_\infty(J^2)} \le  C\log(m+1)\log(n+1)E(f,\Pi_{m,n}^\mathcal{L})_{L_\infty(J^2)},
\end{equation*}
where $C$ is some absolute constant. In particular, if $f^{(r,0)}, f^{(0,s)}\in C(J^2)$ for some $r,s\in \Z_+$, then
\begin{equation*}
\begin{split}
    \Vert f-&\L_{m,n}(f)\Vert_{L_\infty(J^2)} \\
&\le  C_{r,s}\log(m+1)\log(n+1)\({m^{-r}}\w^{(1)}(f^{(r,0)},{m^{-1}})_\infty+{n^{-s}}\w^{(2)}(f^{(0,s)},{n^{-1}})_\infty\),
\end{split}
\end{equation*}
where
$$
\w^{(1)}(f,\d)_\infty=\sup_{|h|\le 1-\d}\Vert f(\cdot+h,\cdot)-f\Vert_\infty,
$$
$$
\w^{(2)}(f,\d)_\infty=\sup_{|h|\le 1-\d}\Vert f(\cdot,\cdot+h)-f\Vert_\infty
$$
are the partial moduli of continuity by the first and second variables, correspondingly.
\end{theorem}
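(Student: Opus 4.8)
The plan is to split the assertion into its two parts: the first inequality is a standard Lebesgue-constant estimate, while the ``in particular'' statement follows by feeding a Jackson-type bound for the best approximation into the first inequality. First I would exploit that, by the reproduction property \eqref{LP} and Theorem~\ref{thErb}, the operator $\L_{m,n}$ is a projection onto $\Pi_{m,n}^{\mathcal{L}}$, so $\L_{m,n}P=P$ for every $P\in\Pi_{m,n}^{\mathcal{L}}$. Writing
\begin{equation*}
f-\L_{m,n}f=(f-P)-\L_{m,n}(f-P)
\end{equation*}
and taking the $L_\infty(J^2)$-norm gives
\begin{equation*}
\Vert f-\L_{m,n}f\Vert_{L_\infty(J^2)}\le \left(1+\Lambda_{m,n}\right)\Vert f-P\Vert_{L_\infty(J^2)},
\end{equation*}
where $\Lambda_{m,n}$ is the Lebesgue constant, i.e. the operator norm of $\L_{m,n}$ on $C(J^2)$. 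Passing to the infimum over $P\in\Pi_{m,n}^{\mathcal{L}}$ reduces the first inequality to the bound $\Lambda_{m,n}\le C\log(m+1)\log(n+1)$.

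To estimate $\Lambda_{m,n}=\max_{(x,y)\in J^2}\sum_{(k,l)\in\I_{m,n}}|\ell_{m,n}(x,y;x_k,y_l)|$ I would use the explicit formula \eqref{E1}. The additive correction term $-\frac12\hat C_n(y_l)\hat C_n(y)$ is harmless: since $|\hat C_n|\le\sqrt2$ and $\sum_{(k,l)\in\I_{m,n}}\l_{k,l}=O(1)$ by \eqref{lll}, its total contribution is bounded by an absolute constant. For the reproducing-kernel part I would set $x=\cos\theta$, $y=\cos\phi$, $x_k=\cos\theta_k$, $y_l=\cos\phi_l$, turning each product $\hat C_i\hat C_j$ into a product of cosines and the double sum over $\Gamma_{m,n}^{\mathcal{L}}$ into a trigonometric sum. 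For each fixed $i$ the inner summation over the admissible range $0\le j<n(1-i/m)$ is a Chebyshev--Dirichlet kernel, which by the Christoffel--Darboux identity collapses to a combination of Dirichlet kernels in the arguments $\phi\pm\phi_l$. Summing the resulting expressions against the weights $\l_{k,l}$ and invoking the classical $O(\log)$ bounds for the univariate Lebesgue function at the Chebyshev--Gauss--Lobatto nodes in each variable would then yield the product bound $O(\log(m+1)\log(n+1))$.

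I expect the main obstacle to lie precisely here: the index set $\Gamma_{m,n}^{\mathcal{L}}=\{i/m+j/n<1\}\cup\{(0,n)\}$ is triangular rather than a full rectangle, so the reproducing kernel does not factor as a tensor product of one-dimensional Dirichlet kernels. The variable upper limit $j<n(1-i/m)$ has to be handled by the Christoffel--Darboux summation, and the estimates must be organized so that the triangular truncation still produces the product of two logarithms rather than a worse growth; the additional node $(0,n)$ and the correction term account for the mild asymmetry between the two variables.

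Finally, for the ``in particular'' statement I would combine the first inequality with a Jackson-type estimate
\begin{equation*}
E(f,\Pi_{m,n}^{\mathcal{L}})_{L_\infty(J^2)}\le C_{r,s}\left(m^{-r}\w^{(1)}(f^{(r,0)},m^{-1})_\infty+n^{-s}\w^{(2)}(f^{(0,s)},n^{-1})_\infty\right).
\end{equation*}
Such an approximant can be built by combining univariate near-best (Jackson--Favard) polynomial approximations of degree at most $m-1$ in $x$, applied to $f^{(r,0)}$, and of degree at most $n$ in $y$, applied to $f^{(0,s)}$, through a blending (Boolean-sum) construction whose partial degrees are chosen to respect the constraint $i/m+j/n<1$; the key point is that the pure axis directions $(i,0)$ with $i\le m-1$ and $(0,j)$ with $j\le n$ are all admissible, which is what allows the separate rates $m^{-r}$ and $n^{-s}$. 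Substituting this estimate into the first inequality completes the proof.
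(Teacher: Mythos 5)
First, a point of context: the paper itself does not prove this statement at all --- it is imported verbatim from the reference \cite{DEKL} (``The next theorem was obtained in \cite{DEKL}''), so your attempt has to stand on its own. Your reduction of the first inequality is the standard and correct one: by \eqref{LP} the operator $\L_{m,n}$ reproduces $\Pi_{m,n}^{\mathcal{L}}$, so $\Vert f-\L_{m,n}f\Vert_\infty\le(1+\Lambda_{m,n})E(f,\Pi_{m,n}^{\mathcal{L}})_\infty$, and the handling of the correction term $-\frac12\hat C_n(y_l)\hat C_n(y)$ via $|\hat C_n|\le\sqrt2$ and $\sum_{(k,l)\in\I_{m,n}}\l_{k,l}=1$ is fine. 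But the bound $\Lambda_{m,n}\le C\log(m+1)\log(n+1)$, which you correctly identify as the crux and then leave as a plan (``the estimates must be organized so that the triangular truncation still produces the product of two logarithms''), is precisely the main content of the cited 27-page paper: the anisotropically dilated simplex $\Gamma_{m,n}^{\mathcal{L}}$ does not factor, and on top of the Christoffel--Darboux work you sketch there is a second issue you do not mention at all --- $\L_{m,n}$ is a \emph{discrete} operator, so the sums over $\I_{m,n}$ reproduce Fourier coefficients only up to aliasing, and one must separately control the passage from the Lebesgue constant of the Fourier partial sum over $\Gamma_{m,n}^{\mathcal{L}}$ to that of the interpolation operator. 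As written, this is a genuine gap, not a routine verification.

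The Jackson step also has a concrete flaw. A Boolean sum $P_1f+P_2f-P_1P_2f$ of \emph{univariate} near-best operators is not a bivariate polynomial: $P_1f$ is a polynomial in $x$ whose coefficients are merely continuous functions of $y$, so your approximant need not lie in $\Pi_{m,n}^{\mathcal{L}}$, and the rationale you give (admissibility of the pure-axis frequencies $(i,0)$, $i\le m-1$, and $(0,j)$, $j\le n$) is not what produces the separate rates --- no joint polynomial with full degree $m-1$ in $x$ and nontrivial $y$-degree fits in the triangle, since $i=m-1$ forces $j<n/m$. The correct and simpler route is to note that $\Gamma_{m,n}^{\mathcal{L}}$ contains the rectangle $\{0,\dots,\lceil m/2\rceil-1\}\times\{0,\dots,\lceil n/2\rceil-1\}$, take the \emph{tensor product} $V^{(1)}_{m'}V^{(2)}_{n'}$ of uniformly bounded near-best univariate operators at these half degrees, write
\begin{equation*}
f-V^{(1)}_{m'}V^{(2)}_{n'}f=\bigl(f-V^{(1)}_{m'}f\bigr)+V^{(1)}_{m'}\bigl(f-V^{(2)}_{n'}f\bigr),
\end{equation*}
and apply the univariate Jackson--Favard inequality fiberwise in each variable; halving the degrees only changes the constants $C_{r,s}$ since $\w^{(j)}(g,2t)_\infty\le C\,\w^{(j)}(g,t)_\infty$. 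With that replacement the second half of your argument closes; the Lebesgue-constant bound remains the part you would genuinely have to prove (or cite, as this paper does).
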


\section{Main results}

Our goal is to obtain analogues of Theorem~\ref{thDEKL} in the weighted $L_p$-spaces.
We say that a real bivariate function $f:D\to \R^2$ belongs to the weighted space $L_{p,u}(D)$, $1\le p<\infty$, with the weight $u(x,y)\ge 0$  if
$$
\Vert f\Vert_{L_{p,u}(D)}=\(\int_{D}|f(x,y)u(x,y)|^p{\rm d}x{\rm d}y\)<\infty.
$$
We denote the Chebyshev-type weight $w$ in the case $D=J^2=[-1,1]^2$  by
$$
w(x,y)=\frac1{\(\sqrt{1-x^2}\sqrt{1-y^2}\)^\frac1p}.
$$
In the one-dimensional case, the function $w$ has the same meaning, i.e. $w(x)=(1-x^2)^{-1/(2p)}$. In what follows, if $u\equiv 1$, then we denote
$\Vert f\Vert_{L_{p}(D)}=\Vert f\Vert_{L_{p,u}(D)}$.

Everywhere below, $c_p$ and $C_p$ denote some positive constants depending only on $p$.

From Theorem~\ref{th2} and Lemma~\ref{lemmz} in Section~5 below, we directly obtain the following quite standard result.

\begin{theorem}\label{thos}
  Let $f\in B(J^2)$, $1\le p<\infty$, and $(m,n)\in \Np^2$. Then
\begin{equation*}
  \Vert f-\L_{m,n}f\Vert_{L_{p,w}(J^2)}\le C_pK_p({m,n})\widetilde{E}(f,\Pi_{m,n}^\mathcal{L})_{L_{p,w}(J^2)},
\end{equation*}
where
\begin{equation}\label{K_p}
  K_p(m,n)=\left\{
            \begin{array}{ll}
              \log(m+1)\log(n+1), & \hbox{$p=1$,}\\
              1, & \hbox{$1<p<\infty$.}
            \end{array}
          \right.
\end{equation}
\end{theorem}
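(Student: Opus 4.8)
The plan is to read Theorem~\ref{thos} as the specialization of the abstract Theorem~\ref{th2} to the concrete Lissajous--Chebyshev setting, the only real work being the verification of the two Marcinkiewicz--Zygmund (MZ) inequalities that Theorem~\ref{th2} requires. I would set $D=J^2$, $B_p(D)=L_{p,w}(J^2)$ with $\Vert\cdot\Vert_p=\Vert\cdot\Vert_{L_{p,w}(J^2)}$, take $X_\n={\rm LC}_{m,n}$ with index set $\I_{m,n}$ (Proposition~\ref{prop_int}), $\Pi_\n=\Pi_{m,n}^\mathcal{L}$, and weights $\La_\n=\{\l_{i,j}\}$ from~\eqref{lll}. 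By Theorem~\ref{thErb}, $\L_{m,n}f=L(f,{\rm LC}_{m,n},\cdot)$ is precisely the Lagrange operator $\L_\n f(\x)=L(f,X_\n,\x)$ demanded by Theorem~\ref{th2}. Granting the right-hand and left-hand MZ inequalities with constants $M_1$ and $M_2$, Theorem~\ref{th2} then gives at once
\[
\Vert f-\L_{m,n}f\Vert_{L_{p,w}(J^2)}\le\Bigl(\frac{M_2}{M_1}+1\Bigr)\widetilde{E}(f,\Pi_{m,n}^\mathcal{L})_{L_{p,w}(J^2)},
\]
so the theorem follows once I show $M_2/M_1+1\le C_pK_p(m,n)$. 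This is exactly the estimate packaged as Lemma~\ref{lemmz}.

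To establish those MZ inequalities I would exploit the geometric fact stressed after~\eqref{int4}: ${\rm LC}_{m,n}$ is a subset of the full tensor-product Chebyshev--Gauss--Lobatto grid $\{(x_i,y_j)\}$, and the weights $\l_{i,j}$ in~\eqref{lll} are the corresponding product Gauss--Lobatto quadrature weights, rescaled to account for retaining only the $(m+1)(n+1)/2$ nodes of ${\rm LC}_{m,n}$. Thus $\bigl(\sum_{(i,j)\in\I_{m,n}}\l_{i,j}|P(x_i,y_j)|^p\bigr)^{1/p}$ is a discrete analogue of $\Vert P\Vert_{L_{p,w}(J^2)}$, and the MZ inequalities are precisely the statement that the two are comparable on $\Pi_{m,n}^\mathcal{L}$. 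I would reduce this to the one-dimensional Chebyshev--Gauss--Lobatto MZ inequalities, for which it is classical that, for algebraic polynomials of degree at most $m$ (resp.\ $n$), the discrete quadrature and the weighted $L_p$-norm are equivalent with constants independent of the degree when $1<p<\infty$, whereas at $p=1$ the passage from the quadrature sum \emph{up to} the norm (the left-hand direction) loses a factor of order $\log$ in the degree, while the reverse (right-hand) direction remains bounded.

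The main obstacle is the lifting of these one-dimensional facts to the bivariate space $\Pi_{m,n}^\mathcal{L}$, whose degree index set $\Gamma_{m,n}^\mathcal{L}=\{i/m+j/n<1\}\cup\{(0,n)\}$ is a truncated triangle rather than a tensor-product box. I would apply a Fubini-type argument, treating the sum and the integral one variable at a time and invoking the one-dimensional bounds, using that every $P\in\Pi_{m,n}^\mathcal{L}$ has degree at most $m$ in $x$ and at most $n$ in $y$ so that the one-dimensional MZ inequalities apply in each slice; the corner $(0,n)$, which pushes the $y$-degree up to exactly $n$ (the Lobatto endpoint degree), requires the most care, and it is the accumulation of one $\log$ per variable at $p=1$ that produces the factor $\log(m+1)\log(n+1)$ in $K_p(m,n)$. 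Tracking the constants through this argument yields $M_2/M_1+1\le C_pK_p(m,n)$, which combined with the displayed bound completes the proof.
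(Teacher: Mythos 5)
Your first step is exactly the paper's proof: Theorem~\ref{thos} is stated in the paper with the one-line justification that it follows ``directly'' from Theorem~\ref{th2} combined with Lemma~\ref{lemmz}, with the identifications you make ($D=J^2$, $\Vert\cdot\Vert_p=\Vert\cdot\Vert_{L_{p,w}(J^2)}$, $X_\n={\rm LC}_{m,n}$, $\Pi_\n=\Pi_{m,n}^\mathcal{L}$, weights~\eqref{lll}). Where you diverge is in how Lemma~\ref{lemmz} itself is to be proved, and there your proposal has a genuine gap. The paper does not tensorize: it cites \cite{E} for~\eqref{mza1} (all $1\le p<\infty$) and for~\eqref{mza2} when $1<p<\infty$, and for $p=1$ it uses the reproducing property $P=\L_{m,n}P$ together with the $L_{1,w}$ Lebesgue-constant-type bound $\sup_{(x,y)\in J^2}\Vert \ell_{m,n}(\cdot,\cdot;x,y)\Vert_{L_{1,w}(J^2)}\le C(mn)^{-1}\log(m+1)\log(n+1)$ from \cite{E}, \cite{DEKL}. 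The factor $\log(m+1)\log(n+1)$ thus comes from a genuinely bivariate estimate of the fundamental polynomials~\eqref{E1}, not from accumulating one logarithm per variable in one-dimensional MZ inequalities.

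The concrete reason your Fubini-type slicing fails is that ${\rm LC}_{m,n}$ is \emph{not} a product set: the index set $\I_{m,n}$ carries the parity constraint $i+j\equiv 0 \pmod 2$, so a fixed column $x=x_i$ contains only about $(n+1)/2$ of the $n+1$ Gauss--Lobatto ordinates $y_j$, while the restriction of $P\in\Pi_{m,n}^\mathcal{L}$ to that column can have degree exactly $n$ in $y$ (because of the mode $(0,n)$ and, more generally, of modes with $j$ close to $n$ for small $i$). By dimension count there exist nonzero polynomials of $y$-degree $n$ vanishing at all nodes of a given column, so no one-dimensional left-hand MZ inequality can hold slice-wise, with or without a logarithmic loss; the validity of the global inequality~\eqref{mza2} rests on genuinely two-dimensional structure --- the parity pattern together with $\gcd(m,n)=1$, equivalently the exactness of the associated cubature obtained by sampling along the Lissajous curve, which is the content of \cite{E}. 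Your route could be salvaged for the right-hand inequality~\eqref{mza1}, since the half-grid sum is dominated by the full tensor-grid sum (the weights~\eqref{lll} are at most twice the product Gauss--Lobatto weights) and the one-dimensional right-hand MZ inequality then applies in each variable; but for~\eqref{mza2}, which is the inequality that actually produces $K_p(m,n)$, the slice-by-slice argument breaks down and must be replaced either by the citation of \cite{E} or, at $p=1$, by the fundamental-polynomial estimate above.
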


The error of the best one-sided approximation $\widetilde{E}(f,\Pi_{m,n}^\mathcal{L})_{L_{p,w}(J^2)}$ is a rather specific quantity, for which, in fact, it is difficult to find effective estimates via more classical objects. In the case of standard polynomial spaces,  some estimates of the error of the best one-sided approximation can be found in~\cite{SP}, \cite{HI}, \cite{H}.
For example, it is well-known that in the one-dimensional case for any periodic function $f$ of bounded variation on $\T\simeq [-\pi,\pi)$ and $1\le p<\infty$, one has
(see Lemma~\ref{lem2v})
\begin{equation}\label{bv1}
  \widetilde{E}(f,\Tp_n)_{L_{p}(\T)}=\mathcal{O}({n^{-1/p}}),
\end{equation}
where $\Tp_n$ is the set of all univariate real trigonometric polynomials of order at most $n$. Note that~\eqref{bv1} implies some well-known estimates of the error of approximation of $f$ by the classical Lagrange interpolation polynomials on equidistant nodes~\cite{PrXu} (see also~\cite{Pr84}, \cite{Pr89}).

We do not know if it is possible to obtain  similar estimates in the multivariate case which can be used together with Theorem~\ref{thos} to obtain efficient estimates of $\Vert f-\L_{m,n}f\Vert_{L_{p,w}(J^2)}$. In this paper, instead of utilizing Theorem~\ref{thos}, we apply another approach based on Theorem~\ref{th1}.
To introduce this approach, we need to recall some notation.

Let $BV(I)$, $I=[a,b]$, be the set of all real-valued functions $f: I\to \R$ with bounded variation on $I$.
Recall that $f\in BV(I)$ if
$$
V_I (f)=\sup_{a\le\xi_1<\dots<\xi_n\le b}\sum_{k=1}^{n-1}|f(\xi_{k+1})-f(\xi_k)|<\infty.
$$

A real-valued bivariate function $f\,:\,I^2 \to \R$ is said to be of bounded variation on $I^2$ in the sense of Hardy-Krause (see~\cite{CA})
if $f(\cdot,y), f(x,\cdot) \in BV(I)$ for some fixed $x,y\in I$ and if
$$
H_{I^2} (f)=\sup_{\pi_1, \pi_2}\sum_{k=1}^{m-1}\sum_{l=1}^{n-1}|\D f(\xi_k,\eta_l)|<\infty,
$$
where
$$
\D f(\xi_k,\eta_l)=f(\xi_{k+1},\eta_{l+1})-f(\xi_{k+1},\eta_{l})-f(\xi_{k},\eta_{l+1})+f(\xi_{k},\eta_{l})
$$
and
$$
\pi_1: a\le\xi_1<\dots<\xi_m\le b,
$$
$$
\pi_2: a\le\eta_1<\dots<\eta_n\le b
$$
are arbitrary decompositions of $I$.
The class of such functions $f$ we denote by $HBV(I^2)$. Other classes of functions of multivariate bounded variation as well as their relation to the class $HBV(I^2)$ can be found, e.g., in~\cite{CA}.

Note that if $f\in HBV(I^2)$, then $f(\cdot,y)$, $f(x,\cdot)\in BV(I)$ for any fixed $x,y\in I$. In what follows the values
$$
V_{1,I} (f)(y)\quad\text{and}\quad V_{2,I} (f)(x)
$$
are defined as the total variations of $f(\cdot,y)$ and $f(x,\cdot)$ on $I$. Note that for $f\in HBV(I^2)$ we also have $V_{1,I} (f)$, $V_{2,I} (f)\in BV(I)$ (see~\cite{CA}). Hence, $\sup_{y\in I}V_{1,I} (f)(y)$ and $\sup_{x\in I}V_{2,I} (f)(x)$ are finite.

Denote
$$
\widetilde{D}^{(r,s)}f(x,y)=\left\{\frac{\partial^{r+s}}{\partial^r\!\phi \,\partial^s\psi}f(\cos\phi,\cos\psi)\right\}_{\phi=\arccos x,\, \psi=\arccos y}.
$$
The following theorem is one of the main results of this paper.

\begin{theorem}\label{thL1}
Let $f\in B(J^2)$, $1\le p<\infty$, $r,s\in \Z_+$, and $(m,n)\in \Np^2$. If  $\widetilde{D}^{(r,s)}f\in HBV(J^2)$, then
\begin{equation*}\label{est1}
\begin{split}
    \Vert f-\L_{m,n}f&\Vert_{L_{p,w}(J^2)}\le CK_p(m,n)\bigg( {m^{-\frac1p-r}}\Vert V_{1,J} (\widetilde{D}^{(r,0)}f)\Vert_{L_{p,w}(J)}\\
&+{n^{-\frac1p-s}}\Vert V_{2,J} (\widetilde{D}^{(0,s)}f)\Vert_{L_{p,w}(J)}+m^{-\frac1p-r}n^{-\frac1p-s}H_{J^2}(\widetilde{D}^{(r,s)}f)\bigg),
\end{split}
\end{equation*}
where the constant $C$ does not depend on $m$, $n$, and $f$.
\end{theorem}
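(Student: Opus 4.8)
The plan is to deduce the estimate from the abstract comparison result of Theorem~\ref{th1}, comparing the interpolation on ${\rm LC}_{m,n}$ with tensor product Chebyshev interpolation on the full grid of which it is a subset. In Theorem~\ref{th1} I take the unprimed data $(X_\n,\Pi_\n,\La_\n)$ to be ${\rm LC}_{m,n}$, $\Pi_{m,n}^\mathcal{L}$ and $\{\l_{i,j}\}$ from~\eqref{lll}, and the primed data $(X_\n',\Pi_\n',\La_\n',\L_\n')$ to be the full tensor product Chebyshev grid $\{(x_i,y_j):i=0,\dots,m,\ j=0,\dots,n\}$ (Figure~1), the tensor product space $\Pi_{m,n}'$ of polynomials of degree at most $m$ in $x$ and at most $n$ in $y$, its product weights $\{\l_{i,j}'\}$, and the corresponding tensor product interpolation operator $\L_{m,n}'$. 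Since ${\rm LC}_{m,n}$ is a subset of this grid and both operators reproduce the nodal data on ${\rm LC}_{m,n}$, the matching condition~\eqref{MainInt} holds; the inclusion $\Pi_{m,n}^\mathcal{L}\subset\Pi_{m,n}'$ follows from $\Gamma_{m,n}^\mathcal{L}\subset\{0,\dots,m\}\times\{0,\dots,n\}$, and the weight comparison $\l_{i,j}\le\g\,\l_{i,j}'$ as well as the left-hand MZ inequality for ${\rm LC}_{m,n}$ and the right-hand MZ inequality for the full grid are supplied by Lemma~\ref{lemmz} and Lemma~\ref{pass}. The ratio of the MZ constants is bounded for $1<p<\infty$ and of order $\log(m+1)\log(n+1)$ for $p=1$, which is precisely the factor $K_p(m,n)$. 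Theorem~\ref{th1} thus reduces the problem to bounding $E(f,\Pi_{m,n}^\mathcal{L})_{L_{p,w}(J^2)}$ and $\Vert f-\L_{m,n}'f\Vert_{L_{p,w}(J^2)}$.

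The next step is the change of variables $x=\cos\phi$, $y=\cos\psi$, $\phi,\psi\in[0,\pi]$. Under it the Chebyshev nodes become equidistant, $\L_{m,n}'$ becomes a tensor product $L_m^{(1)}\otimes L_n^{(2)}$ of one-dimensional trigonometric interpolation operators on equidistant nodes, and the Chebyshev weight disappears, since
\begin{equation*}
\int_{J}|g(x)|^p w(x)^p\,{\rm d}x=\int_0^\pi|g(\cos\phi)|^p\,{\rm d}\phi
\end{equation*}
and likewise for the double integral. In these coordinates the hypothesis $\widetilde{D}^{(r,s)}f\in HBV(J^2)$ says exactly that $\partial_\phi^r\partial_\psi^s f(\cos\phi,\cos\psi)$ is of bounded Hardy--Krause variation, and the three terms on the right-hand side of the assertion become the $\psi$-integrated variation of $\partial_\phi^r f$ in $\phi$, the $\phi$-integrated variation of $\partial_\psi^s f$ in $\psi$, and the mixed variation of $\partial_\phi^r\partial_\psi^s f$.

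For the tensor product error I would use the operator identity
\begin{equation*}
I-L_m^{(1)}\otimes L_n^{(2)}=(I-L_m^{(1)})+(I-L_n^{(2)})-(I-L_m^{(1)})(I-L_n^{(2)}).
\end{equation*}
The two single-variable terms are handled by Fubini together with the one-dimensional interpolation estimate $\Vert g-L_m g\Vert_{L_p(\T)}\le Cm^{-1/p-r}V(g^{(r)})$ for $g^{(r)}\in BV$, which follows from~\eqref{bv1} via Theorem~\ref{th2} (cf.~\cite{PrXu}, \cite{Pr84}): applying $(I-L_m^{(1)})$ in $\phi$ for each fixed $\psi$ and integrating the $p$-th power of the variation over $\psi$ reproduces, after returning to the variable $y$, the first term $m^{-1/p-r}\Vert V_{1,J}(\widetilde{D}^{(r,0)}f)\Vert_{L_{p,w}(J)}$, and symmetrically for the second. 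The best approximation term is controlled by the same mechanism: since $\Gamma_{m,n}^\mathcal{L}$ contains the full rectangle of frequencies up to about $m/2$ and $n/2$, one has $E(f,\Pi_{m,n}^\mathcal{L})_{L_{p,w}(J^2)}\le\Vert f-\L_{m,n}''f\Vert_{L_{p,w}(J^2)}$ for a tensor product interpolation $\L_{m,n}''$ of those half-degrees, to which the same decomposition applies and yields the identical three terms up to constants.

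The main obstacle is the mixed term $(I-L_m^{(1)})(I-L_n^{(2)})f$, which must be bounded by $m^{-1/p-r}n^{-1/p-s}H_{J^2}(\widetilde{D}^{(r,s)}f)$; a one-variable-at-a-time argument entangles the two operators and does not directly produce the Hardy--Krause variation. The approach I would take is to represent each univariate error in Peano/Stieltjes form, writing $(I-L_m^{(1)})g(\phi)=\int K_m(\phi,t)\,{\rm d}_t\partial_t^r g(t)$ after $r$-fold summation by parts (the boundary terms being controlled by the even periodic structure of $f(\cos\phi,\cdot)$), and similarly in $\psi$, so that
\begin{equation*}
(I-L_m^{(1)})(I-L_n^{(2)})f(\phi,\psi)=\int\!\!\int K_m(\phi,t)K_n(\psi,u)\,{\rm d}_t{\rm d}_u\,\partial_t^r\partial_u^s f(t,u).
\end{equation*}
The generalized Minkowski inequality then bounds the $L_p$-norm by $\sup_t\Vert K_m(\cdot,t)\Vert_{L_p}\cdot\sup_u\Vert K_n(\cdot,u)\Vert_{L_p}\cdot H_{J^2}(\widetilde{D}^{(r,s)}f)$, where the two kernel factors are $O(m^{-1/p-r})$ and $O(n^{-1/p-s})$ --- this uniform kernel estimate is just the kernel form of the univariate bound used above --- and the double Stieltjes mass is exactly the mixed variation. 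Making this representation rigorous for $HBV$ functions (integrability of the product kernel, the interchange of integrations, and the summation by parts together with its boundary terms) is the technical heart of the argument and the step I would treat most carefully.
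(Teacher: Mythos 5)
Your overall architecture coincides with the paper's: both pass through the abstract comparison Theorem~\ref{th1}, the cosine substitution $x=\cos\phi$, $y=\cos\psi$ (under which the weight $w$ disappears), univariate bounded-variation estimates, and a Prestin--Tasche-style treatment \cite{PrT} of the mixed term. The genuinely different choice you make is the intermediate operator $\L_\n'$: you take tensor-product \emph{Lagrange} interpolation on the $(m+1)\times(n+1)$ Chebyshev grid, whereas the paper takes the de la Vall\'ee-Poussin interpolation means $\V_{m,n}$ built on the six-times finer equidistant grid (Lemma~\ref{pass} together with Lemma~\ref{thVtrig1}). This substitution is not cosmetic: it breaks the endpoint $p=1$, which the theorem claims with the single factor $K_1(m,n)=\log(m+1)\log(n+1)$.

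Concretely, two sources of extra logarithms appear in your route at $p=1$. First, your univariate building block $\Vert g-L_mg\Vert_{L_1(\T)}\le Cm^{-1-r}V(g^{(r)})$ is false for $r=0$: trigonometric Lagrange interpolation of a unit step function has $L_1$-error of exact order $m^{-1}\log m$ (the Gibbs oscillations decay like $(m|x|)^{-1}$ away from the jump, and integrating this produces the logarithm); equivalently, the discrete-to-continuous bound analogous to~\eqref{newnew} fails for the Lagrange operator because the Dirichlet kernel has $L_1$-norm $\asymp\log m$. The same failure destroys the uniform kernel estimate $\sup_t\Vert K_m(\cdot,t)\Vert_{L_1}=\mathcal{O}(m^{-1})$ on which your mixed-term argument rests, since that kernel bound \emph{is} the step-function estimate. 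Second, even if you accept the univariate Lagrange estimates with their logarithmic factors (obtainable via Theorem~\ref{th2}, whose left-hand MZ constant at $p=1$ is itself logarithmic, cf.~\cite{PrXu}), these factors get multiplied by the factor $K_1(m,n)$ already produced by Theorem~\ref{th1}, yielding $\log^2(m+1)\log^2(n+1)$ overall --- strictly weaker than the statement. The paper's choice of $\V_{m,n}$ is made precisely to avoid this: $\V_n$ reproduces $\Tp_{2n}$, interpolates on the finer grid so that Theorem~\ref{th1} applies with the right-hand MZ inequality of Lemma~\ref{lemmz2T}, and satisfies the uniform bound~\eqref{newnew} for \emph{all} $1\le p<\infty$, so that $\Vert f^*-\V_{m,n}f^*\Vert_{L_p(\T^2)}$ carries no logarithms and Lemma~\ref{lem2v}(2) gives the clean $n^{-1/p}V(f)$ rate; the bivariate HBV bound is then obtained by repeating the proofs of \cite{PrT} for $\V_{m,n}$. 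For $1<p<\infty$ your plan is viable and essentially parallel to the paper's, but as written it does not prove the case $p=1$. A minor further inaccuracy: Lemma~\ref{pass} is not an MZ inequality, and the right-hand MZ inequality for your coarse grid is not supplied by the paper's lemmas (Lemma~\ref{lemmz2T} concerns the $6m\times 6n$ grid) --- you would need to import it from Xu-type results as in~\eqref{mztrig}.
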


Denote $\vp(t)=\sqrt{1-t^2}$,  $\vp_1(x,y)=\vp(x)$, and $\vp_2(x,y)=\vp(y)$.

\begin{corollary}\label{corL1}
Let $f\in B(J^2)$, $1\le p<\infty$, $r,s\in \{0,1\}$, and $(m,n)\in \Np^2$. If $\vp_{1}^r\vp_{2}^sf^{(r,s)}\in HBV(J^2)$, then
\begin{equation*}
\begin{split}
    \Vert f-\L_{m,n}f&\Vert_{L_{p,w}(J^2)}\le CK_p(m,n)\bigg( {m^{-\frac1p-r}}\Vert V_{1,J} (\vp_1^rf^{(r,0)})\Vert_{L_{p,w}(J)}\\
&+{n^{-\frac1p-s}}\Vert V_{2,J} (\vp_2^sf^{(0,s)})\Vert_{L_{p,w}(J)}+m^{-\frac1p-r}n^{-\frac1p-s}H_{J^2}(\vp_1^r\vp_2^sf^{(r,s)})\bigg),
\end{split}
\end{equation*}
where the constant $C$ does not depend on $m$, $n$, and $f$.
\end{corollary}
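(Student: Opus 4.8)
The plan is to deduce Corollary~\ref{corL1} from Theorem~\ref{thL1} by converting the \emph{trigonometric} derivatives $\widetilde{D}^{(r,s)}f$ into the ordinary partial derivatives $f^{(r,s)}$ weighted by powers of $\vp$, which is possible in closed form precisely because $r,s\in\{0,1\}$. First I would compute $\widetilde{D}^{(r,s)}f$ straight from its definition using the chain rule. Since $\frac{{\rm d}}{{\rm d}\phi}\cos\phi=-\sin\phi$ and, for $\phi\in[0,\pi]$, $\sin\phi=\sqrt{1-\cos^2\phi}=\vp(\cos\phi)$, a single differentiation in $\phi$ contributes the factor $-\vp(\cos\phi)$ and raises the order of the $x$-derivative by one, and similarly for $\psi$. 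Because each variable is differentiated at most once, no product-rule terms stemming from derivatives of $\sin\phi$ or $\sin\psi$ occur; evaluating at $\phi=\arccos x$, $\psi=\arccos y$ then gives the identity
\begin{equation}\label{cor-id}
\widetilde{D}^{(r,s)}f(x,y)=(-1)^{r+s}\vp_1^r(x,y)\,\vp_2^s(x,y)\,f^{(r,s)}(x,y),\qquad r,s\in\{0,1\}.
\end{equation}

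Next I would check that the three variational quantities appearing in Theorem~\ref{thL1} are insensitive to the constant sign in~\eqref{cor-id}. For each fixed $y$ the function $\widetilde{D}^{(r,0)}f(\cdot,y)$ differs from $\vp_1^r f^{(r,0)}(\cdot,y)$ only by the factor $(-1)^r$, and the total variation satisfies $V_J(-g)=V_J(g)$; hence $V_{1,J}(\widetilde{D}^{(r,0)}f)\equiv V_{1,J}(\vp_1^r f^{(r,0)})$ as functions of $y$, and likewise $V_{2,J}(\widetilde{D}^{(0,s)}f)\equiv V_{2,J}(\vp_2^s f^{(0,s)})$. For the Hardy--Krause functional the constant $(-1)^{r+s}$ factors out of every mixed difference $\D$, so $H_{J^2}(\widetilde{D}^{(r,s)}f)=H_{J^2}(\vp_1^r\vp_2^s f^{(r,s)})$. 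In particular, \eqref{cor-id} shows that $\widetilde{D}^{(r,s)}f\in HBV(J^2)$ if and only if $\vp_1^r\vp_2^s f^{(r,s)}\in HBV(J^2)$, so the hypothesis of the corollary coincides with that of Theorem~\ref{thL1}.

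It then remains only to substitute these three equalities into the bound of Theorem~\ref{thL1}, which produces the claimed estimate verbatim and with the same constant $C$. The one point demanding care---indeed the only real obstacle---is the restriction $r,s\in\{0,1\}$: as soon as $r\ge 2$ or $s\ge 2$ the iterated chain rule generates extra lower-order terms (for instance $\frac{\partial^2}{\partial\phi^2}f(\cos\phi,\cos\psi)=-\cos\phi\,f^{(1,0)}+\sin^2\phi\,f^{(2,0)}$), so the clean identity~\eqref{cor-id} and the resulting reformulation of the variational data would fail. Within the range $r,s\le 1$, however, these terms never arise and the passage from Theorem~\ref{thL1} to the corollary is routine.
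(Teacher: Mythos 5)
Your proposal is correct and is exactly the intended argument: the paper states Corollary~\ref{corL1} without proof as an immediate consequence of Theorem~\ref{thL1}, and your chain-rule identity $\widetilde{D}^{(r,s)}f=(-1)^{r+s}\vp_1^r\vp_2^s f^{(r,s)}$ for $r,s\in\{0,1\}$, combined with the sign-invariance of $V_{1,J}$, $V_{2,J}$, and $H_{J^2}$, is precisely the substitution that turns the theorem's bound into the corollary's. Your remark on why the restriction $r,s\in\{0,1\}$ is essential (extra lower-order chain-rule terms for higher derivatives) is also accurate.
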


Sharper results can be obtained in terms of moduli of smoothness and the errors of the best approximation of functions by algebraic polynomials. To formulate such results, let us recall some notations.

Denote
$$
\Delta_h^\nu (f,x)=\left\{
                     \begin{array}{ll}
                       \displaystyle{\sum_{i=1}^\nu}\binom{\nu}{i}(-1)^{\nu-i}f(x+(i-\nu/2)h), & \hbox{$x\pm \nu h/2\in J$,} \\
                       \displaystyle0, & \hbox{otherwise.}
                     \end{array}
                   \right.
$$
The weighted Ditzian-Totik modulus of smoothness of $f\in L_{p,u}(J)$ of order $\nu\in \N$ and step $t>0$ is defined by
\begin{equation}\label{DTmod}
\begin{split}
  \w_\nu^\vp(f,t)_{p,u}=\sup_{0<h\le t}&\Vert u \D_{h\vp}^\nu f\Vert_{L_p[-1+t^*,1-t^*]}\\
&+\sup_{0<h\le t}\Vert u \overrightarrow{\D}_{h}^\nu f\Vert_{L_p[-1,-1+At^*]}+\sup_{0<h\le t}\Vert u \overleftarrow{\D}_{h}^\nu f\Vert_{L_p[1-At^*,1]},
\end{split}
\end{equation}
where $t^*=2\nu^2t^2$, $A$ is an absolute constant, and the forward and backward $\nu$th differences are given by
$$
\overrightarrow{\D}_h^\nu f(x)=\D_h^\nu (f,x+\nu h/2),\quad \overleftarrow{\D}_h^\nu f(x)=\D_h^\nu (f,x-\nu h/2)
$$
(for details see~\cite[p.~218]{DT}).

In our work, we mainly deal with the following analogue of~\eqref{DTmod} given by
\begin{equation}\label{newmod}
  \w_{\nu,\a}^\vp(f,t)_p=\sup_{0\le h\le t} \Vert \mathcal{W}_{\nu h}^\a \D_{h\vp}^\nu (f,\cdot)\Vert_{L_p(J)},
\end{equation}
where
$$
\mathcal{W}_\d(x)=\(\(1-x-\frac{\d\vp(x)}{2}\)\(1+x-\frac{\d\vp(x)}{2}\)\)^\frac12.
$$
The modulus~\eqref{newmod} has been recently introduced in~\cite{KLS14}. It is known (see~\cite{KLS}) that at least for $\a\in \N$ the modulus~\eqref{newmod}  is equivalent to the weighted Ditzian-Totik modulus of smoothness~\eqref{DTmod} with $u=\vp^\a$.

Let us extend the modulus of smoothness~\eqref{newmod} to the bivariate case. For this we denote $\mathcal{W}_\d^1(x,y)=\mathcal{W}_\d(x)$ and $\mathcal{W}_\d^2(x,y)=\mathcal{W}_\d(y)$. Let also $w_1(x,y)=w(x)$ and $w_2(x,y)=w(y)$.

For an admissible $f\,:\, J^2\to \R$, the partial moduli of smoothness  related to~\eqref{newmod} are defined by
\begin{equation*}\label{newmodP}
  \w_{\nu,\a}^{\vp,j}(f,t)_p=\sup_{0\le h\le t} \Vert \mathcal{W}_{\nu h}^{\a,j} \D_{h\vp_j}^{\nu,j} (f,\cdot)\Vert_{L_{p,w_i}(J^2)},\quad i,j=1,2,\quad i\neq j,
\end{equation*}
where $\D_{h\vp_j}^{\nu,j}=\D_{h\vp_je_j}^{\nu}$ and $e_1=(1,0)$, $e_2=(0,1)$.
The corresponding mixed modulus of smoothness is given by
\begin{equation*}\label{newmodM}
  \w_{\nu,\a_1,\a_2}^{\vp}(f,t_1,t_2)_p=\sup_{0\le h_j\le t_j,\,j=1,2} \Vert \mathcal{W}_{\nu h_1}^{\a_1,1}\mathcal{W}_{\nu h_2}^{\a_2,2} \D_{h_1\vp_1}^{\nu,1}\D_{h_2\vp_2}^{\nu,2} (f,\cdot)\Vert_{L_{p}(J^2)}.
\end{equation*}

Now we are ready to formulate a result which gives a refinement of Theorem~\ref{thL1} in the case of smooth functions.
In what follows, we denote the set of all (locally) absolutely continuous functions on $[a,b]$ by $AC([a,b])$ ($AC_{\rm loc}([a,b])$).

\begin{theorem}\label{thL2}
 Let $f\in B(J^2)$, $1\le p<\infty$, $r,s\in\N$, $\nu\in \N$, and $(m,n)\in \Np^2$. Suppose $f^{(r-1,0)}(\cdot,y)$ and $f^{(r,s-1)}(\cdot,y)$ or $f^{(0,s-1)}(x,\cdot)$ and $f^{(r-1,s)}(x,\cdot)$ belong to  $AC_{\rm loc}(J)$ for a.e. $x,y\in J$.  If, in addition, $\vp_1^rf^{(r,0)}$, $\vp_2^sf^{(0,s)}$, $\vp_{1}^r\vp_{2}^sf^{(r,s)}\in L_{p,w}(J^2)$, and $m>r$, $n>s$, then
\begin{equation*}\label{trig}
\begin{split}
    \Vert f-&\L_{m,n} f\Vert_{L_{p,w}(J^2)}\le CK_p(m,n)\Bigg( {m^{-r}}\w_{\nu,r-1/p}^{\vp,1}\(f^{(r,0)},m^{-1}\)_{p}
    \\
    &+{n^{-s}}\w_{\nu,s-1/p}^{\vp,2}\(f^{(0,s)},n^{-1}\)_{p}
+{m^{-r}}n^{-s}\w_{\nu,r-1/p,s-1/p}^{\vp}\(f^{(r,s)},m^{-1},n^{-1}\)_{p}\Bigg),
\end{split}
\end{equation*}
where the constant $C$ does not depend on $m$, $n$, and $f$.
\end{theorem}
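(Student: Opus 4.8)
The plan is to strip the bivariate problem down to one-dimensional interpolation in each variable, exploiting the fact that ${\rm LC}_{m,n}$ sits inside a full tensor product Chebyshev grid. First I would invoke Lemma~\ref{pass}, the concrete realisation of Theorem~\ref{th1} in our situation: take $\L_\n=\L_{m,n}$ on $X_\n={\rm LC}_{m,n}$ with $\Pi_\n=\Pi_{m,n}^{\mathcal{L}}$, and for $\L_\n'$ the tensor product Chebyshev interpolation $\mathcal{T}_{m,n}=L_m^{x}L_n^{y}$ on the full grid $X_\n'=\{(x_i,y_j)\}$ with $\Pi_\n'$ the corresponding tensor product space. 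Then ${\rm LC}_{m,n}\subset X_\n'$ and $\Pi_{m,n}^{\mathcal{L}}\subset\Pi_\n'$, while $\L_{m,n}f$ and $\mathcal{T}_{m,n}f$ agree on ${\rm LC}_{m,n}$ since both interpolate $f$ there, so \eqref{MainInt} holds. The Marcinkiewicz-Zygmund inequalities of Lemma~\ref{lemmz} then give
\[
  \Vert f-\L_{m,n}f\Vert_{L_{p,w}(J^2)}\le C\(E(f,\Pi_{m,n}^{\mathcal{L}})_{L_{p,w}(J^2)}+\Vert f-\mathcal{T}_{m,n}f\Vert_{L_{p,w}(J^2)}\),
\]
leaving two quantities: the tensor product interpolation error and the best approximation error in the triangular space $\Pi_{m,n}^{\mathcal{L}}$.

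For the interpolation term I would use the operator splitting
\[
  I-L_m^{x}L_n^{y}=(I-L_m^{x})+(I-L_n^{y})-(I-L_m^{x})(I-L_n^{y})
\]
and treat each summand with the one-dimensional Chebyshev interpolation estimate of~\cite{Pr84},~\cite{PrT},~\cite{PrXu},
\[
  \Vert g-L_m^{x}g\Vert_{L_{p,w}(J)}\le CK_p(m)\,m^{-r}\,\w_{\nu,r-1/p}^{\vp}(g^{(r)},m^{-1})_p,
\]
where $K_p(m)=\log(m+1)$ for $p=1$ and $K_p(m)=1$ for $1<p<\infty$; this is applied for almost every frozen variable and integrated by Fubini, the weight factorising as $w=w_1w_2=(\vp_1\vp_2)^{-1/p}$. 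The two pure terms, with $g=f(\cdot,y)$ and $g=f(x,\cdot)$, produce the first two summands of the bound. For the mixed term I would freeze $y$ and apply the $x$-estimate to $(I-L_n^{y})f$; since $L_n^{y}$ acts only in $y$ it commutes with $\partial_x^{r}$, so the governing derivative is $(I-L_n^{y})f^{(r,0)}$, and applying the $y$-estimate to the outcome yields the mixed modulus $\w_{\nu,r-1/p,s-1/p}^{\vp}(f^{(r,s)},m^{-1},n^{-1})_p$ together with the factor $m^{-r}n^{-s}$. The local absolute continuity hypotheses are exactly what is needed to pull the partial derivatives through the interpolation operators, and the alternative ``or'' hypotheses correspond to processing the mixed term in the order $y$-then-$x$ instead.

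The best approximation error $E(f,\Pi_{m,n}^{\mathcal{L}})_{L_{p,w}(J^2)}$ I would bound by the same three moduli, now without any logarithmic factor, by exhibiting an explicit near-best approximant in $\Pi_{m,n}^{\mathcal{L}}$: a tensor product of univariate de~la~Vall\'ee-Poussin-type operators whose degrees are allocated so as to respect the defining constraint $i/m+j/n<1$ of $\G_{m,n}^{\mathcal{L}}$, estimated by the same splitting together with the univariate weighted Jackson theorems for $\w_{\nu,\a}^{\vp}$. Since $K_p(m,n)\ge1$, this term is absorbed, and assembling the pieces gives the asserted inequality.

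The \emph{main obstacle} is the mixed term. Two things must be controlled there simultaneously: the one-dimensional estimate has to be uniform in the frozen variable so that it can legitimately be composed in the two directions, and the composition of the two weights $\mathcal{W}_{\nu h}^{\a}$ and of the two difference operators $\D_{h\vp_j}^{\nu,j}$ has to reproduce exactly the bivariate mixed modulus rather than a larger majorant. This rests on the commutation of $L_n^{y}$ with $\partial_x$ and on the clean factorisation of the weight, which decouples the two directions. The accompanying bookkeeping point is that the single logarithm per variable arising for $p=1$ must combine to precisely the factor $K_p(m,n)$ of the statement, and not to a higher power of the logarithm; keeping this count tight is exactly where the sharp form of the Marcinkiewicz-Zygmund constants in Lemma~\ref{lemmz} is used, whereas the construction of the triangular near-best approximant is routine.
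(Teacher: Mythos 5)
Your skeleton --- realizing Theorem~\ref{th1} on a finer Chebyshev grid, splitting the tensor error into two pure terms plus a mixed term, and handling $E(f,\Pi^{\mathcal{L}}_{m,n})$ by a near-best triangular approximant --- does mirror the paper's proof (Lemma~\ref{pass} combined with Lemma~\ref{thVtrig2}), but two of your choices leave genuine gaps. First, your intermediate operator is the tensor Lagrange--Chebyshev interpolant, whereas the paper takes the de la Vall\'ee-Poussin interpolation means $\V_{m,n}$ on the oversampled trigonometric grid $\phi_k=\pi k/(3m)$, $\psi_l=\pi l/(3n)$. This is not cosmetic: $\V_n$ interpolates at $6n$ points while reproducing $\Tp_{2n}$ (Lemma~\ref{lem1v}) and obeys the log-free sampling bound~\eqref{newnew} for all $1\le p<\infty$ \emph{including} $p=1$, which keeps the inner estimates free of logarithms and simultaneously absorbs the best-approximation term, since $\V_{\lfloor m/8\rfloor,\lfloor n/8\rfloor}f^*\in\Tp^{\mathcal{L}}_{m,n}$. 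In your scheme the factor $K_p(m)$ from the one-dimensional Lagrange estimate multiplies the factor $K_p(m,n)$ that Theorem~\ref{th1} already produces through Lemma~\ref{lemmz}, so at $p=1$ you end with $\log^2(m+1)\log^2(n+1)$ rather than the asserted $K_p(m,n)$ --- and a log-free $L_1$ bound for Lagrange interpolation is not available, the relevant Lebesgue constant growing logarithmically. Moreover, the univariate estimate you cite is not in Pr84/PrT/PrXu (those treat trigonometric interpolation of BV functions, and the moduli $\w^{\vp}_{\nu,\a}$ were introduced only in KLS14), and the right-hand MZ inequality you need on the full tensor grid for the full tensor space is not Lemma~\ref{lemmz}, which concerns the Lissajous points and $\Pi^{\mathcal{L}}_{m,n}$; the paper supplies it instead as Lemma~\ref{lemmz2T} on the oversampled grid.

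Second, and more fundamentally, you correctly name the mixed-term obstacle --- that the frozen-variable estimates must compose to the mixed modulus ``rather than a larger majorant'' --- but you do not resolve it, and commutation of the $y$-interpolant with $\partial_x$ plus factorization of $w$ do not suffice. Freezing $\psi$, applying a sup-based one-dimensional bound, and integrating in $\psi$ produces $\bigl(\int\sup_{h}\Vert\cdot\Vert^p\,{\rm d}\psi\bigr)^{1/p}$, which \emph{dominates} the mixed modulus $\sup_h\bigl(\int\!\!\int\cdots\bigr)^{1/p}$: the inequality points the wrong way. The paper's resolution is the averaged modulus $\w^{*\vp}_{\nu,\a}$ of~\eqref{avmW}: Lemma~\ref{lemVf*} proves the one-dimensional de la Vall\'ee-Poussin bound with this modulus (via Lemma~\ref{lem2v}, the Jackson inequality of Lemma~\ref{J}, and a dilation $f_\rho(x)=f(\rho x)$ reducing to $f^{(r-1)}\in AC([-1,1])$); its integral-in-$\tau$ structure interchanges with $\int{\rm d}\psi$ by Fubini, the difference $\D^{\nu,1}_{\tau\vp_1}$ commutes with $\V_{\infty,n}$, the one-dimensional bound is applied once more in the second variable, and only at the end does Lemma~\ref{mmm} return to the ordinary modulus --- see computation~\eqref{B1}. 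Without this averaging device, which is the technical heart of Lemma~\ref{thVtrig2}, your mixed-term argument does not close.
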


\begin{remark}\label{rem0}
Note that Theorem~\ref{thL1} is valid also for discontinuous functions of bounded variation while in Theorem~\ref{thL2} we have to require the existence of partial and mixed derivatives of at least first order. We have a similar situation in Theorem~\ref{thL+} and Theorem~\ref{thL++} stated below.
\end{remark}


It is also possible to obtain estimates of the error of approximation of functions $f$ by interpolation polynomials $\L_{m,n}f$ without using mixed moduli of smoothness, mixed derivatives, and the notion of bounded variation in the sense of Hardy-Krause.
To formulate such results we need some additional notation.

Let $\mathcal{P}_n$ be the space of all real-valued univariate algebraic polynomials of degree at most~$n$. In what follows, for simplicity, we denote
$$
E_n^\mathcal{P}(f)_{L_{p,u}(J)}=E(f,\mathcal{P}_n)_{L_{p,u}(J)}.
$$
Let also
$$
E_{m,\infty}^\Pp(f)_{L_{p,u}(J^2)}=E(f,\Pp_{m,\infty}(L_{p,u}))_{L_{p,u}(J^2)},
$$
where $\Pp_{m,\infty}(L_{p,u})$ is a class of functions $g$ such that $g\in L_{p,u}(J^2)$ and $g$ is an algebraic polynomial of degree at most $m$ in the first variable. By analogy, we define the class $\Pp_{\infty,n}(L_{p,u})$ and
$$
E_{\infty,n}^\Pp(f)_{L_{p,u}(J^2)}=E(f,\Pp_{\infty,n}(L_{p,u}))_{L_{p,u}(J^2)}.
$$

In what follows, for a sequence $\{a_k\}_{k\in \mathcal{I}_{n}}$, $\mathcal{I}_{n}=\{0,1,\dots,n-1\}$, of numbers, we denote
$$
\Vert \{a_k\}\Vert_{\widetilde{\ell}_p^n}=\(\frac1n\sum_{k=0}^{n-1} |a_k|^p\)^\frac1p.
$$

The following result is a counterpart of Theorem~\ref{thL2} for the errors of the best approximation. Recall that the points $x_k$ and $y_l$ are given by~\eqref{xy}.

\begin{theorem}\label{thL+}
  Let $f\in B(J^2)$, $1\le p<\infty$, $r,s\in \N$, and $(m,n)\in \Np^2$.
Suppose $f^{(r-1,0)}(\cdot,y)$, $f^{(0,s-1)}(x,\cdot)\in AC_{\rm loc}(J)$ for a.e. $x,y\in J$. If, in addition, $\vp_1^rf^{(r,0)}\in L_{p,w}(J^2)$ and $\vp_2^sf^{(0,s)}(x_k,\cdot)\in L_{p,w}(J)$ for all $k\in \mathcal{I}_{6m}$ and $m>r$, $n>s$, then
\begin{equation*}\label{L+1}
\begin{split}
    \Vert f-\L_{m,n}f\Vert_{L_{p,w}(J^2)}\le CK_p(m,n)\Big(m^{-r} &E_{m-r,\infty}^\Pp(f^{(r,0)})_{L_{p,\vp_1^r w}(J^2)}\\
&+n^{-s} \Vert \{E_{n-s}^\Pp(f^{(0,s)}(x_k,\cdot))_{L_{p,\vp_2^s w}(J)}\}\Vert_{\widetilde{\ell}_p^{6m}}\Big),
\end{split}
\end{equation*}
where the constant $C$ does not depend on $m$, $n$, and $f$.
\end{theorem}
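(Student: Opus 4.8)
The plan is to reduce the interpolation error on the Lissajous--Chebyshev nodes to a tensor-product interpolation error and then split the latter into two one-dimensional problems. Since ${\rm LC}_{m,n}$ lies inside the full tensor-product Chebyshev grid $\{(x_i,y_j)\}$ and the full tensor-product Chebyshev interpolation reproduces $f$ at every grid point (in particular on ${\rm LC}_{m,n}$), I would apply Theorem~\ref{th1} with $X_\n={\rm LC}_{m,n}$, $\Pi_\n=\Pi_{m,n}^{\mathcal{L}}$, $\L_\n=\L_{m,n}$, and with $X_\n'$, $\Pi_\n'$, $\L_\n'=\L_{m,n}'$ the data of the tensor-product grid; the compatibility~\eqref{MainInt} holds because both operators interpolate $f$ at the common nodes. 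The left-hand Marcinkiewicz--Zygmund inequality for $({\rm LC}_{m,n},\La_{m,n},\Pi_{m,n}^{\mathcal{L}})$ together with the right-hand one on the tensor grid (the content of Lemma~\ref{lemmz}) contributes exactly the factor $K_p(m,n)$, the accompanying best-approximation term in~\eqref{eq4} being of the same order; this is the reduction packaged in Lemma~\ref{pass}, namely
\begin{equation*}
\Vert f-\L_{m,n}f\Vert_{L_{p,w}(J^2)}\le CK_p(m,n)\,\Vert f-\L_{m,n}'f\Vert_{L_{p,w}(J^2)},
\end{equation*}
where $\L_{m,n}'=\L_m^{(1)}\L_n^{(2)}$ is the tensor product of the one-dimensional Chebyshev--Lobatto interpolation operators $\L_m^{(1)}$ (degree $m$ in the first variable) and $\L_n^{(2)}$ (degree $n$ in the second).

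The next step is the operator identity
\begin{equation*}
f-\L_{m,n}'f=(f-\L_m^{(1)}f)+\L_m^{(1)}(f-\L_n^{(2)}f).
\end{equation*}
This asymmetric splitting --- no outer operator on the first summand, the outer $\L_m^{(1)}$ on the second --- is precisely what produces the two different-looking terms of the statement and what avoids any mixed-derivative contribution (cf.\ Remark~\ref{rem0}). For the first summand I would read $f-\L_m^{(1)}f$ as a one-dimensional interpolation error in $x$ for a.e.\ fixed $y$ and invoke the classical weighted estimate for Chebyshev interpolation (see~\cite{Pr84},~\cite{PrXu}), of the form $\Vert g-\L_m^{(1)}g\Vert_{L_{p,w}(J)}\le Cm^{-r}E_{m-r}^{\mathcal{P}}(g^{(r)})_{L_{p,\vp^rw}(J)}$; integrating in $y$ and using the definition of $E_{m-r,\infty}^\Pp$ yields the first term $m^{-r}E_{m-r,\infty}^\Pp(f^{(r,0)})_{L_{p,\vp_1^rw}(J^2)}$. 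Here the hypotheses $f^{(r-1,0)}(\cdot,y)\in AC_{\rm loc}(J)$ and $\vp_1^rf^{(r,0)}\in L_{p,w}(J^2)$ are exactly what this one-dimensional estimate requires.

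The main work is the second summand $\L_m^{(1)}(f-\L_n^{(2)}f)$. For a.e.\ fixed $y$ this is an algebraic polynomial of degree at most $m$ in $x$, so its weighted $L_{p,w}(J)$-norm is controlled, via a one-dimensional Marcinkiewicz--Zygmund-type bound, by the discrete $\widetilde{\ell}_p$-norm of the nodal values. Passing to the trigonometric variable $x=\cos\phi$ (under which the Chebyshev-weighted $L_{p,w}$-norm becomes an unweighted $L_p$-norm) shows that a degree-$m$ polynomial is dominated with a constant uniform in $1\le p<\infty$ once one samples on the oversampled grid of $6m$ equidistant nodes; since the $m+1$ interpolation nodes of $\L_m^{(1)}$ are a subgrid of these $6m$ nodes, the finer discrete norm majorizes the coarse one and one obtains $\Vert\L_m^{(1)}h\Vert_{L_{p,w}(J)}\le CK_p(m)\Vert\{h(x_k)\}_{k\in\mathcal{I}_{6m}}\Vert_{\widetilde{\ell}_p^{6m}}$ for $h=(f-\L_n^{(2)}f)(\cdot,y)$. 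This is the origin of the index set $\mathcal{I}_{6m}$ and the norm $\widetilde{\ell}_p^{6m}$. At each node $x_k$ the same classical one-dimensional estimate gives $\Vert(f-\L_n^{(2)}f)(x_k,\cdot)\Vert_{L_{p,w}(J)}\le Cn^{-s}E_{n-s}^{\mathcal{P}}(f^{(0,s)}(x_k,\cdot))_{L_{p,\vp_2^sw}(J)}$, and interchanging the finite sum over $k$ with the integral in $y$ by Fubini reassembles $\Vert\{E_{n-s}^{\mathcal{P}}(f^{(0,s)}(x_k,\cdot))\}\Vert_{\widetilde{\ell}_p^{6m}}$. I expect this second term to be the main obstacle: one must pass from the continuous weighted $L_{p,w}$-norm in the first variable to a discrete evaluation on the $6m$-grid through an oversampled Marcinkiewicz--Zygmund inequality whose constant stays bounded as $p\to1$ (the residual logarithm for $p=1$ being absorbed into $K_p(m,n)$), while reconciling the interpolation nodes of $\L_m^{(1)}$ with the larger sampling grid and checking that the nodal regularity $\vp_2^sf^{(0,s)}(x_k,\cdot)\in L_{p,w}(J)$ for $k\in\mathcal{I}_{6m}$ is all that is needed. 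Collecting the two bounds completes the estimate.
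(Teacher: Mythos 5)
Your architecture is the paper's: reduce to an intermediate tensor-type interpolant via Theorem~\ref{th1} and Marcinkiewicz--Zygmund inequalities, split asymmetrically (your identity is exactly the paper's~\eqref{rep1}), then run one-dimensional estimates with Fubini over the $6m$ nodes and transfer to weighted algebraic best approximation via $x=\cos\phi$ and Ky's inequality~\eqref{Je2}. But there is a genuine gap in your choice of intermediate operator: you take the tensor-product \emph{Lagrange} interpolant $\L_m^{(1)}\L_n^{(2)}$ at the critical Chebyshev--Lobatto grid, and this breaks the case $p=1$. The reduction step already spends the entire factor $K_p(m,n)$ of~\eqref{K_p}, so every subsequent estimate must be log-free uniformly in $1\le p<\infty$. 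The two one-dimensional facts you invoke for $\L_m^{(1)}$ --- the operator bound $\Vert\L_m^{(1)}h\Vert_{L_{p,w}(J)}\le CK_p(m)\Vert\{h(x_k)\}\Vert_{\widetilde{\ell}_p^{6m}}$ and the error bound $\Vert g-\L_m^{(1)}g\Vert_{L_{p,w}(J)}\le Cm^{-r}E_{m-r}^\Pp(g^{(r)})_{L_{p,\vp^r w}(J)}$ --- each carry an extra $\log$ at $p=1$: this is the one-dimensional analogue of the $K_p$ factor in Lemma~\ref{lemmz}, i.e.\ the Lebesgue-constant obstruction to mean convergence of Lagrange interpolation in $L_1$ at critically sampled nodes, and it cannot be removed. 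Your parenthetical remark that ``the residual logarithm for $p=1$ is absorbed into $K_p(m,n)$'' is precisely where the argument fails: those logarithms \emph{multiply} the $K_1(m,n)=\log(m+1)\log(n+1)$ already produced by the reduction, so your route proves at best a bound of order $\log^2(m+1)\log(n+1)$ for $p=1$, strictly weaker than the theorem. (For fixed $1<p<\infty$, where $K_p=1$ and all one-dimensional constants are $C_p$, your scheme does go through.)

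The missing idea is the paper's use of the de la Vall\'ee-Poussin interpolatory means $\V_m$, sampled on the \emph{oversampled} equidistant trigonometric grid of $6m$ points (degree $4m-1$ against $6m$ samples): by~\eqref{newnew} these operators are bounded from discrete data into $L_p(\T)$ with a constant independent of $m$ for \emph{all} $1\le p<\infty$, and since $\V_m$ reproduces $\Tp_{2m}$ one gets $\Vert f-\V_mf\Vert_{L_p(\T)}\le C_p\widetilde{E}(f,\Tp_m)_{L_p(\T)}$ (Lemma~\ref{lem2vE}) with no logarithm even at $p=1$; combined with the Sendov--Popov estimate in Lemma~\ref{lem2v} and~\eqref{Je2}, this yields exactly your two one-dimensional bounds but uniformly in $p$, and Lemma~\ref{thV+} then carries out your splitting and Fubini computation verbatim with $\V$ in place of Lagrange. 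This also explains the hypothesis at all $6m$ nodes $x_k$, $k\in\mathcal{I}_{6m}$: the operator $\V_{m,\infty}$ genuinely samples the whole oversampled grid, not just $m+1$ Lobatto points. A secondary inaccuracy: with your tensor-Lagrange intermediate operator the best-approximation term in~\eqref{eq4} is not ``of the same order'' for free, since a tensor interpolant of coordinate degrees $(m,n)$ does not lie in $\Pi_{m,n}^\mathcal{L}$; the paper handles this in Lemma~\ref{pass} by inserting a second, coarser operator $\V_{\lfloor m/8\rfloor,\lfloor n/8\rfloor}$ whose degrees fall inside $\Gamma_{m,n}^\mathcal{L}$, and your write-up would need the analogous device.
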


\begin{remark}\label{rem1}
  The result symmetric  to Theorem~\ref{thL+} has the following form: If we suppose in Theorem~\ref{thL+} that $\vp_2^sf^{(0,s)}\in L_{p,w}(J^2)$ and $\vp_1^sf^{(r,0)}(\cdot,y_k)\in L_{p,w}(J)$ for all $k\in \mathcal{I}_{6n}$, then
\begin{equation*}
\begin{split}
    \Vert f-\L_{m,n}f\Vert_{L_p(J^2)}\le CK_p(m,n)\Big(n^{-s} &E_{\infty,{n-s}}^\Pp(f^{(0,s)})_{L_{p,\vp_2^s w}(J^2)}\\
&+m^{-r} \Vert \{E_{m-r}^\Pp(f^{(r,0)}(\cdot,y_k))_{L_{p,\vp_1^r w}(J)}\}\Vert_{\widetilde{\ell}_p^{6n}}\Big),
\end{split}
\end{equation*}
where the constant $C$ does not depend on $m$, $n$, and $f$.
\end{remark}

For not necessarily smooth functions, we obtain the following analogues of Theorem~\ref{thL1} in terms of the classical bounded variation.

\begin{theorem}\label{thL++}
  Let $f\in B(J^2)$, $1\le p<\infty$, $r,s\in \Z_+$, and $(m,n)\in \Np^2$. Suppose that $\widetilde{D}^{(r,0)}f(\cdot,y)\in BV(J)$ for a.e. $y\in J$ and $\widetilde{D}^{(0,s)}f(x_k,\cdot)\in BV(J)$ for all $k\in \I_{6m}$. If, in addition, $V_{1,J}(\widetilde{D}^{(r,0)}f)\in L_{p,w}(J)$, then
\begin{equation*}\label{v++1}
\begin{split}
    \Vert f-&\L_{m,n}f\Vert_{L_{p,w}(J^2)}\\
&\le CK_p(m,n)\({m^{-r-1/p}} \Vert V_{1,J}(\widetilde{D}^{(r,0)}f)\Vert_{L_{p,w}(\T)}+{n^{-s-1/p}} \Vert \{V_{2,J} (\widetilde{D}^{(0,s)}f(x_k,\cdot))\}\Vert_{\widetilde{\ell}_p^{6m}}\),
\end{split}
\end{equation*}
where the constant $C$ does not depend on $m$, $n$, and $f$.
\end{theorem}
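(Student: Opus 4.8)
My plan is to derive Theorem~\ref{thL++} from the abstract reduction in Theorem~\ref{th1}, comparing $\L_{m,n}f$ with a tensor-product Chebyshev interpolation operator. The key geometric fact is that the abscissae of ${\rm LC}_{m,n}$ lie in $\{\cos(i\pi/m)\}$ and the ordinates in $\{\cos(j\pi/n)\}$, so ${\rm LC}_{m,n}$ sits inside the refined tensor-product Chebyshev grid $X'=\{(\cos(i\pi/(6m)),\cos(j\pi/(6n)))\}$. I take $\L'_{m,n}=\L^x\otimes\L^y$, the tensor product of the two univariate Chebyshev--Gauss--Lobatto interpolation operators attached to $X'$, and let $\Pi'$ be the associated tensor-product polynomial space, which contains $\Pi_{m,n}^\mathcal{L}$. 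Both $\L_{m,n}f$ and $\L'_{m,n}f$ reproduce $f$ on ${\rm LC}_{m,n}$, so~\eqref{MainInt} holds; combining the Marcinkiewicz--Zygmund inequalities for ${\rm LC}_{m,n}$ and for the tensor grid (their constants together account for the factor $K_p(m,n)$, whose logarithms appear only at $p=1$) in Theorem~\ref{th1} gives
\begin{equation*}
\Vert f-\L_{m,n}f\Vert_{L_{p,w}(J^2)}\le CK_p(m,n)\big(E(f,\Pi_{m,n}^\mathcal{L})_{L_{p,w}(J^2)}+\Vert f-\L'_{m,n}f\Vert_{L_{p,w}(J^2)}\big).
\end{equation*}
Both terms on the right will be estimated by the same one-dimensional machinery below; this packaging is the rôle of Lemma~\ref{pass}.

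Next I pass to the periodic picture via $x=\cos\phi$, $y=\cos\psi$. The Chebyshev weight is exactly cancelled by the Jacobian, so $\Vert\cdot\Vert_{L_{p,w}(J)}$ becomes the unweighted $L_p$-norm on $[0,\pi]$; the Chebyshev--Gauss--Lobatto nodes become equidistant, the operators $\L^x,\L^y$ turn into univariate trigonometric interpolation on equidistant nodes, and $\widetilde{D}^{(r,0)}f$, $\widetilde{D}^{(0,s)}f$ become ordinary $\phi$- and $\psi$-derivatives. The one-dimensional input I rely on is the classical fact that for a periodic $g$ with $g^{(r)}$ of bounded variation the trigonometric interpolation error on equidistant nodes is $\mathcal{O}(N^{-r-1/p})\,V(g^{(r)})$; this follows from~\eqref{bv1} together with~\cite{PrXu} applied to the $r$th derivative.

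The heart of the argument is the asymmetric tensor splitting
\begin{equation*}
f-\L'_{m,n}f=(I-\L^x)f+\L^x\big((I-\L^y)f\big),
\end{equation*}
which, unlike the symmetric splitting, produces no mixed term --- this is precisely why Theorem~\ref{thL++} avoids the Hardy--Krause variation $H_{J^2}$. For the first summand I apply the one-dimensional bound in $x$ for a.e.\ fixed $y$, getting $Cm^{-r-1/p}V_{1,J}(\widetilde{D}^{(r,0)}f)(y)$, and then take the $L_{p,w}$-norm in $y$, which yields the first term of the theorem. For the second summand I first invoke a univariate Marcinkiewicz--Zygmund inequality in $x$ to dominate $\Vert\L^x g\Vert_{L_{p,w}(J^2)}$ by $\Vert\{\Vert g(x_k,\cdot)\Vert_{L_{p,w}(J)}\}\Vert_{\widetilde{\ell}_p^{6m}}$, the discrete norm over the $6m$ grid abscissae $x_k$; then I apply the one-dimensional bound in $y$ to $g(x_k,\cdot)=(I-\L^y)f(x_k,\cdot)$, obtaining $Cn^{-s-1/p}V_{2,J}(\widetilde{D}^{(0,s)}f(x_k,\cdot))$. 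This reproduces the second term and explains why the hypotheses are stated pointwise at the nodes $x_k$, $k\in\I_{6m}$. The same two-step estimate applied to a near-best tensor interpolant also bounds $E(f,\Pi_{m,n}^\mathcal{L})_{L_{p,w}}$ by the same right-hand side, closing the reduction.

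I expect the main obstacle to be twofold. First, establishing the Marcinkiewicz--Zygmund inequalities for the non-tensor system $({\rm LC}_{m,n},\La_{m,n},\Pi_{m,n}^\mathcal{L})$ with the sharp constant $K_p(m,n)$: since $\Pi_{m,n}^\mathcal{L}$ is a triangular, not a full tensor, polynomial space and the nodes form only ``half'' of the Chebyshev grid, the weights~\eqref{lll} and the subset relation must be exploited carefully, and the genuine logarithmic loss at $p=1$ has to be tracked so that it is counted once and not multiplied across the reduction and the one-dimensional steps. Second, the mixed second term: applying $\L^x$ after the $y$-interpolation forces the transition from a continuous $L_{p,w}$-norm to the discrete $\widetilde{\ell}_p^{6m}$-norm, and one must verify (via Fubini) that the univariate inequality in $x$ applies uniformly to the $y$-sections of $(I-\L^y)f$, so that the pointwise bounded-variation hypotheses at the $x_k$ are exactly what the argument consumes.
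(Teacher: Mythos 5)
Your architecture is essentially the paper's: the reduction via Theorem~\ref{th1} using the fact that ${\rm LC}_{m,n}$ sits inside a refined tensor Chebyshev grid is exactly Lemma~\ref{pass}; your asymmetric splitting $f-\L' f=(I-\L^x)f+\L^x\bigl((I-\L^y)f\bigr)$ is exactly the paper's representation~\eqref{rep1}, chosen for the same reason (no mixed/Hardy--Krause term); the passage to the discrete $\widetilde{\ell}_p^{6m}$-norm over the sections $x_k$ and the one-dimensional bounded-variation input are the content of Lemma~\ref{thV++}. Your device for absorbing $E(f,\Pi_{m,n}^\mathcal{L})_{L_{p,w}}$ by a near-best tensor interpolant also parallels the paper, with the caveat (which you gloss over, but it is fixable) that the interpolant's degrees must lie below the triangular threshold $\frac{i}{m}+\frac{j}{n}<1$, which is why Lemma~\ref{pass} uses the coarsened parameters $\lfloor m/8\rfloor,\lfloor n/8\rfloor$.

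The genuine gap is your choice of intermediate operator. You take $\L'$ to be the tensor product of actual Lagrange interpolation operators on the refined Chebyshev grid (equivalently, trigonometric interpolation on equidistant nodes), whereas the paper uses Szabados's de la Vall\'ee-Poussin interpolation means $\V_{m,n}$, which interpolate on an \emph{oversampled} grid ($6n$ nodes for degree at most $4n-1$, reproducing $\Tp_{2n}$) with a uniformly $L_1$-bounded kernel. This difference is fatal at the endpoint $p=1$: both one-dimensional ingredients you rely on carry a logarithmic loss for genuine Lagrange interpolation, namely the discrete-to-continuous bound $\Vert L_m g\Vert_{L_1}\le C\,\frac1m\sum_k|g(t_k)|$ (your step producing the $\widetilde{\ell}_p^{6m}$-norm) and the left-hand MZ inequality underlying the error estimate $\Vert g-L_n g\Vert_{L_1}\le Cn^{-1}V(g)$; the Prestin--Xu rate $n^{-1/p}V(f)$ you invoke is a $1<p<\infty$ result. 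These logarithms sit inside $\Vert f-\L' f\Vert_p$ and hence \emph{multiply} the factor $K_1(m,n)=\log(m+1)\log(n+1)$ already incurred in the Theorem~\ref{th1} reduction, so at $p=1$ your argument yields a strictly weaker bound than the theorem asserts; your own caveat about ``counting the log once'' cannot be implemented with Lagrange as the intermediate operator. The fix is precisely the paper's: replace $\L'$ by $\V_{m,n}$, for which inequality~\eqref{newnew} and Lemma~\ref{lem2v} hold with constants independent of $n$ for all $1\le p<\infty$ (via one-sided approximation, Lemma~\ref{lem2vE}, and~\eqref{mztrig}); with that substitution your proof coincides with the paper's. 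For $1<p<\infty$ your proposal is sound as written.
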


\begin{remark}\label{rem2}
  The result symmetric  to Theorem~\ref{thL++} has the following form: If we suppose in Theorem~\ref{thL++} that
$\widetilde{D}^{(0,s)}f(x,\cdot)\in BV(J)$ for a.e. $x\in J$ and $\widetilde{D}^{(r,0)}f(\cdot,y_k)\in BV(J)$ for all $k\in \I_{6n}$, and, in addition, $V_{2,J}(\widetilde{D}^{(0,s)}f)\in L_{p,w}(J)$, then
\begin{equation*}\label{v+2}
\begin{split}
    \Vert f-&\L_{m,n}f\Vert_{L_{p,w}(J^2)}\\
&\le CK_p(m,n)\({n^{-s-1/p}} \Vert V_{2,J}(\widetilde{D}^{(0,s)}f)\Vert_{L_{p,w}(\T)}+{m^{-r-1/p}} \Vert \{V_{1,J} (\widetilde{D}^{(r,0)}f(\cdot,y_l))\}\Vert_{\widetilde{\ell}_p^{6n}}\),
\end{split}
\end{equation*}
where the constant $C$ does not depend on $m$, $n$, and $f$.
\end{remark}

\section{Proof of the main results}

\subsection{Marcinkiewicz-Zygmund type inequalities}

Let us start from the MZ type inequality for algebraic polynomials  $P$ from $\Pi_{m,n}^\mathcal{L}$.
\begin{lemma}\label{lemmz}
Let $1\le p<\infty$ and $P\in \Pi_{m,n}^\mathcal{L}$. Then
\begin{equation}\label{mza1}
 c_p\(\sum_{(k,l)\in \I_{m,n}}\l_{k,l}|P(x_k,y_l)|^p\)^\frac1p \le \Vert P\Vert_{L_{p,w}(J^2)},
\end{equation}
\begin{equation}\label{mza2}
 \Vert P\Vert_{L_{p,w}(J^2)}\le C_pK_p(m,n)\(\sum_{(k,l)\in \I_{m,n}}\l_{k,l}|P(x_k,y_l)|^p\)^\frac1p,
\end{equation}
where $\lambda_{k,l}$ and $K_p(m,n)$ are given by~\eqref{lll} and~\eqref{K_p}, correspondingly.
\end{lemma}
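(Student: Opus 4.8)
The plan is to pass to the trigonometric picture and then compare the Lissajous node set with the full tensor-product Chebyshev--Gauss--Lobatto grid, of which, by Proposition~\ref{prop_int}, it is exactly the ``checkerboard'' half. First I would use that $w(x,y)^p=\bigl(\sqrt{1-x^2}\sqrt{1-y^2}\bigr)^{-1}$ for \emph{every} $p$, so that the substitution $x=\cos\phi$, $y=\cos\psi$ turns the weighted norm into an unweighted one,
$$
\Vert P\Vert_{L_{p,w}(J^2)}^p=\int_0^\pi\!\!\int_0^\pi |P(\cos\phi,\cos\psi)|^p\,{\rm d}\phi\,{\rm d}\psi,
$$
and sends $P\in\Pi_{m,n}^{\mathcal L}$ to a trigonometric polynomial $\widetilde P(\phi,\psi)=P(\cos\phi,\cos\psi)$, even in each variable, of degree at most $m-1$ in $\phi$ and at most $n$ in $\psi$; thus $\Pi_{m,n}^{\mathcal L}\subset\mathcal P_{m-1}\otimes\mathcal P_n$. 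The nodes $x_k=\cos(k\pi/m)$, $y_l=\cos(l\pi/n)$ are the images of the equidistant points $k\pi/m$, $l\pi/n$, so the full grid $G=\{(x_i,y_j):0\le i\le m,\ 0\le j\le n\}$ is a product of univariate Chebyshev--Gauss--Lobatto grids, and ${\rm LC}_{m,n}$ is its subset $G^+=\{(x_i,y_j)\in G:\ i+j\ \text{even}\}$. Writing $\mu_{i,j}$ for the product Gauss--Chebyshev--Lobatto weight (the product of the univariate weights $1/(2m),1/m$ and $1/(2n),1/n$), a comparison with~\eqref{lll} gives the relation $\lambda_{k,l}=2\mu_{k,l}$ on $\I_{m,n}$, which I will use throughout.

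For~\eqref{mza1} this is all that is needed. The classical univariate Marcinkiewicz--Zygmund inequality for trigonometric polynomials at equidistant points, tensorised in the two variables, gives the ``discrete $\le$ continuous'' bound $\sum_{(i,j)\in G}\mu_{i,j}|P(x_i,y_j)|^p\le C_p\Vert P\Vert_{L_{p,w}(J^2)}^p$ for every $P\in\mathcal P_m\otimes\mathcal P_n\supset\Pi_{m,n}^{\mathcal L}$, with $C_p$ independent of $m,n$ and \emph{no} logarithmic loss (this is the easy direction, valid for $p=1$ as well). Restricting the sum to $G^+$ and using $\lambda_{k,l}=2\mu_{k,l}$ yields $\sum_{\I_{m,n}}\lambda_{k,l}|P(x_k,y_l)|^p=2\sum_{G^+}\mu_{k,l}|P(x_k,y_l)|^p\le 2C_p\Vert P\Vert_{L_{p,w}(J^2)}^p$, which is~\eqref{mza1} with $c_p=(2C_p)^{-1/p}$.

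The inequality~\eqref{mza2} is the substantial part. The tensorised univariate \emph{upper} Marcinkiewicz--Zygmund inequality gives $\Vert P\Vert_{L_{p,w}(J^2)}^p\le C_p K_p(m,n)^p\sum_{(i,j)\in G}\mu_{i,j}|P(x_i,y_j)|^p$, where the factor $K_p(m,n)$ from~\eqref{K_p} enters precisely because the univariate $L_1$ inequality loses a logarithm (as the Dirichlet kernel shows) while for $1<p<\infty$ there is no loss. Consequently~\eqref{mza2} is reduced to the purely discrete domination
$$
\sum_{(i,j)\in G^-}\mu_{i,j}|P(x_i,y_j)|^p\le C\sum_{(k,l)\in G^+}\mu_{k,l}|P(x_k,y_l)|^p,\qquad G^-=G\setminus G^+,
$$
uniformly in $m,n$ and in $P\in\Pi_{m,n}^{\mathcal L}$; given it, the displayed upper bound together with $\lambda_{k,l}=2\mu_{k,l}$ yields~\eqref{mza2} with the constant $C_pK_p(m,n)$.

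The hard part will be exactly this checkerboard domination, and I expect it to be genuinely two-dimensional: a single $x$- or $y$-slice of $\Pi_{m,n}^{\mathcal L}$ is undersampled by ${\rm LC}_{m,n}$, so no iteration of one-dimensional inequalities can produce it, and it is the triangular index set $\Gamma_{m,n}^{\mathcal L}$ (the joint restriction $i/m+j/n<1$) that must be exploited. Two structural facts will be available. Since $\gcd(m,n)=1$, at least one of $m,n$ is odd; for $n$ odd the reflection $R:(x,y)\mapsto(x,-y)$ maps $G^-$ onto $G^+$ preserving the weights $\mu$, and it preserves $\Pi_{m,n}^{\mathcal L}$ because $C_l(-y)=(-1)^lC_l(y)$, so $\sum_{G^-}\mu_{i,j}|P|^p=\sum_{G^+}\mu_{i,j}|RP|^p$ with $RP\in\Pi_{m,n}^{\mathcal L}$. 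Moreover, because $\deg_x P\le m-1$, orthogonality of $P(\cdot,y)$ to $C_m$ and the exactness of Gauss--Chebyshev--Lobatto quadrature give the exact identity $\sum_{k=0}^m\mu_k^{(1)}(-1)^kP(x_k,y)=0$ for every $y$, the only obstruction in the $y$-variable being the top degree $n$, which is precisely the term corrected by $-\tfrac12\hat C_n(y_l)\hat C_n(y)$ in the reproducing kernel of Theorem~\ref{thErb}. The crux, then, is to combine these facts—equivalently, to show that $R$, or the positive-cubature/reproducing-kernel structure attached to $\I_{m,n}$ and the weights~\eqref{lll}, acts boundedly on the discrete $\ell_p$-norm uniformly in $m,n$—so as to bound $\sum_{G^-}$ by $\sum_{G^+}$ with no further logarithmic loss for $p>1$; a naive symmetrisation only swaps the two halves and does not close by itself, so the triangular support and the kernel correction must be used in an essential way.
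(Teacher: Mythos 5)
Your treatment of \eqref{mza1} is essentially sound: under $x=\cos\phi$, $y=\cos\psi$ the weighted norm does become the unweighted one on $[0,\pi]^2$, the comparison $\lambda_{k,l}=2\mu_{k,l}$ with the product Gauss--Chebyshev--Lobatto weights is correct, and the tensorized ``discrete $\le$ continuous'' inequality (which only needs $1/m$-, $1/n$-separation of the nodes, not oversampling) survives restriction to the even-parity half of the grid. The genuine gap is in \eqref{mza2}: you reduce it to the checkerboard domination $\sum_{(i,j)\in G^-}\mu_{i,j}|P(x_i,y_j)|^p\le C\sum_{(k,l)\in G^+}\mu_{k,l}|P(x_k,y_l)|^p$ uniformly in $m,n$, and this --- the entire substance of the hard direction --- is never proved; you yourself flag it as the crux and only list ingredients. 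Those ingredients do not close it. The reflection $R:(x,y)\mapsto(x,-y)$ (for $n$ odd) turns the $G^-$-sum into the $G^+$-sum of the \emph{different} polynomial $RP\in\Pi_{m,n}^{\mathcal{L}}$, and comparing the discrete $G^+$-norms of $RP$ and $P$ is precisely the kind of statement at issue. The row identity $\sum_{k=0}^{m}\mu_k^{(1)}(-1)^kP(x_k,y)=0$ is correct (exactness of the $(m+1)$-point Lobatto rule up to degree $2m-1$ plus $\deg_x P\le m-1$), but it constrains a single linear functional of each row and cannot by itself dominate $\sum|P|^p$ over the missing points for any $p$. Finally, the natural ``soft'' proof of the domination --- combine your full-grid upper bound with a two-sided MZ inequality on $G^+$ --- presupposes \eqref{mza2} itself, so as it stands the reduction is circular; by uniqueness of interpolation on ${\rm LC}_{m,n}$ the inequality does hold for each fixed $(m,n)$, but uniformity in $m,n$ is exactly what is missing.

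For comparison, the paper does not prove the lemma from scratch: \eqref{mza1} for all $1\le p<\infty$ and \eqref{mza2} for $1<p<\infty$ are quoted from Erb~\cite{E}, and only the case $p=1$ of \eqref{mza2} is proved, by a short argument entirely different from yours. Since $P=\L_{m,n}P$, one expands $P$ in the fundamental Lagrange polynomials \eqref{pol}, obtaining $\Vert P\Vert_{L_{1,w}(J^2)}\le\sum_{(k,l)\in\I_{m,n}}|P(x_k,y_l)|\,\Vert\ell_{m,n}(\cdot,\cdot;x_k,y_l)\Vert_{L_{1,w}(J^2)}$, and then invokes the Lebesgue-function-type bound \eqref{mza2x} from \cite{E}, \cite{DEKL} together with $(mn)^{-1}\le 2\lambda_{k,l}$; this is exactly where the factor $\log(m+1)\log(n+1)$ in $K_1(m,n)$ comes from. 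So where your plan stalls (the hard direction on the half-grid), the paper either cites the known result or bypasses any discrete full-grid comparison via the $L_{1,w}$-norms of the $\ell_{m,n}$. To make your route self-contained you would need an honest proof of the checkerboard inequality exploiting $\Gamma_{m,n}^{\mathcal{L}}$ (Erb's own proof goes through the quadrature and reproducing-kernel structure of ${\rm LC}_{m,n}$, not through a parity transfer on the full grid).
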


\begin{proof}
The proof of~\eqref{mza1} for all $1\le p<\infty$ as well as  the proof of~\eqref{mza2} in the case $1<p<\infty$ one can  find in~\cite{E}. Let us consider~\eqref{mza2} in the case $p=1$. By~\eqref{pol}, we get
\begin{equation}\label{mza1x}
  \Vert P\Vert_{L_{1,w}(J^2)}\le \sum_{(k,l)\in \I_{m,n}}|P(x_k,y_l)|\Vert \ell_{m,n}(\cdot,\cdot;x_k,y_l)\Vert_{L_{1,w}(J^2)}.
\end{equation}
At the same time, from~\cite{E} (see also~\cite{DEKL}) it follows
\begin{equation}\label{mza2x}
\sup_{(x,y)\in J^2}\Vert \ell_{m,n}(\cdot,\cdot;x,y)\Vert_{L_{1,w}(J^2)}\le C(mn)^{-1}\log(m+1)\log(n+1),
\end{equation}
where $C$ is some absolute constant.

Combining \eqref{mza1x} and \eqref{mza2x}, we proved the lemma.
\end{proof}

Now let us consider a trigonometric analogue of Lemma~\ref{lemmz} for the following bivariate trigonometric polynomials given by
\begin{equation*}
  \Tp_{m,n}^{\mathcal{L}}={\rm span}\{\cos(k\phi)\cos(l\psi)\,:\, (k,l)\in \Gamma_{m,n}^\mathcal{L}\}.
\end{equation*}

\begin{lemma}\label{lemmzT}
Let $1\le p<\infty$ and $T\in \Tp_{m,n}^\mathcal{L}$. Then
\begin{equation*}
 \Vert T\Vert_{L_{p}(\T^2)}\le C_pK_p(m,n)\(\frac1{mn}\sum_{(k,l)\in \I_{m,n}}\Big|T\Big(\frac{\pi k}m,\frac{\pi l}n\Big)\Big|^p\)^\frac1p.
\end{equation*}
\end{lemma}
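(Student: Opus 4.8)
The plan is to establish the trigonometric analogue of Lemma~\ref{lemmz} (inequality~\eqref{mza2}) by transferring the algebraic result through the standard substitution $x=\cos\phi$, $y=\cos\psi$. Given a trigonometric polynomial $T\in \Tp_{m,n}^{\mathcal{L}}$, observe that since $T$ is an even cosine polynomial in each variable with frequency support in $\G_{m,n}^\L$, there exists an algebraic polynomial $P\in \Pi_{m,n}^{\mathcal{L}}$ such that $T(\phi,\psi)=P(\cos\phi,\cos\psi)$; this is exactly the correspondence $\cos(k\phi)\cos(l\psi)\leftrightarrow C_k(x)C_l(y)$ used to define the two polynomial spaces, so the map $P\mapsto T$ is a bijection between $\Pi_{m,n}^{\mathcal L}$ and $\Tp_{m,n}^{\mathcal L}$. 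At the sampling nodes we have $T(\pi k/m,\pi l/n)=P(x_k,y_l)$ by the definition~\eqref{xy} of $x_k,y_l$, so the right-hand sides of the two inequalities match up to the treatment of the weights $\l_{k,l}$.

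First I would convert the $L_p(\T^2)$-norm of $T$ into a weighted algebraic norm. Writing the integral over $[-\pi,\pi]^2$ and using the evenness of $T$ in each variable to reduce to $[0,\pi]^2$, the change of variables $x=\cos\phi$, $y=\cos\psi$ gives ${\rm d}\phi\,{\rm d}\psi = {\rm d}x\,{\rm d}y/(\sqrt{1-x^2}\sqrt{1-y^2})$. Since the Chebyshev weight is $w(x,y)^p=1/(\sqrt{1-x^2}\sqrt{1-y^2})$, this produces exactly
\begin{equation*}
  \Vert T\Vert_{L_p(\T^2)}^p = 4\int_{J^2}|P(x,y)|^p w(x,y)^p\,{\rm d}x\,{\rm d}y = 4\Vert P\Vert_{L_{p,w}(J^2)}^p,
\end{equation*}
so that $\Vert T\Vert_{L_p(\T^2)} = 4^{1/p}\Vert P\Vert_{L_{p,w}(J^2)}$ up to the precise normalization of the torus measure (the absolute constant is harmless and can be absorbed into $C_p$).

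Next I would apply the algebraic inequality~\eqref{mza2} from Lemma~\ref{lemmz} to $P$, which bounds $\Vert P\Vert_{L_{p,w}(J^2)}$ by $C_pK_p(m,n)\bigl(\sum_{(k,l)\in\I_{m,n}}\l_{k,l}|P(x_k,y_l)|^p\bigr)^{1/p}$. It then remains to replace the node weights $\l_{k,l}$ from~\eqref{lll} by the uniform factor $1/(mn)$ appearing in the statement. Since $\l_{k,l}\le 2/(mn)$ for every $(k,l)$ (the interior case is the largest, while vertex and edge points carry smaller weights), we have $\sum \l_{k,l}|P(x_k,y_l)|^p \le \tfrac{2}{mn}\sum |T(\pi k/m,\pi l/n)|^p$, which yields the claimed bound after absorbing the constant $2^{1/p}$ into $C_p$.

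The main obstacle, and the only point requiring genuine care, is the change-of-variables step: one must verify that the evenness and periodicity of $T$ correctly reproduce each of the four quadrants of $[-\pi,\pi]^2$ so that the torus integral equals a fixed multiple of the algebraic integral, and that the Jacobian singularity at the endpoints $\phi\in\{0,\pi\}$ is exactly cancelled by the definition of $w$, making the integral finite and the identity exact rather than merely an inequality. Everything else is a direct transfer of Lemma~\ref{lemmz} together with the elementary comparison $\l_{k,l}\le 2/(mn)$, and no new estimate is needed.
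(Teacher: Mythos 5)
Your proof is correct and follows exactly the paper's own route: the paper's proof of Lemma~\ref{lemmzT} is precisely the substitution $x=\cos\phi$, $y=\cos\psi$ applied to inequality~\eqref{mza2}, using the evenness of $T$ in each variable. You merely make explicit the details the paper leaves implicit --- the factor $4$ from reducing $\T^2$ to $[0,\pi]^2$, the exact cancellation of the Jacobian by the Chebyshev weight $w$, and the comparison $\l_{k,l}\le 2/(mn)$ absorbing the node weights into $C_p$ --- all of which are accurate.
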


\begin{proof}
We only need to apply the standard trigonometric substitution $x=\cos\phi$, $y=\cos\psi$ to the inequality~\eqref{mza2} and to take into account that $T$ is an even polynomial in each variable. 
\end{proof}

For application of Theorem~\ref{th1}, we need the following polynomial set
 $$
\Tp_{m,n}'=\spann\{\cos(k\phi)\cos(l\psi)\,:\, (k,l)\in \I_{m,n}'\},
$$
where the index set is given by
 $$
  {\I}_{m,n}'=\left\{(i,j)\in \Z_+^2: \begin{array}{ccc}
                                              i=0,\dots,6m-1 \\
                                              j=0,\dots,6n-1
                                            \end{array}
  \right\}.
 $$

\begin{lemma}\label{lemmz2T}
Let $1\le p<\infty$ and $T\in \Tp_{m,n}'$. Then
\begin{equation*}
 c_p\(\frac1{36mn}\sum_{(k,l)\in \I_{m,n}'}\Big|T\Big(\frac{\pi k}{3m},\frac{\pi l}{3n}\Big)\Big|^p\)^\frac1p \le \Vert T\Vert_{L_{p}(\T^2)}.
\end{equation*}
\end{lemma}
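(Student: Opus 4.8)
The plan is to treat this as a standard tensor-product Marcinkiewicz--Zygmund inequality: both the space $\Tp_{m,n}'$ and the node set $\{(\pi k/(3m),\pi l/(3n))\}$ factor as products of univariate objects, so I would reduce the claim to a one-dimensional right-hand MZ inequality and then apply it once in each variable via Fubini.

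First I would isolate the univariate engine: for $1\le p<\infty$ and every univariate trigonometric polynomial $t$ of degree at most $M-1$ sampled at the $M$ equidistant nodes $2\pi j/M$ ($j=0,\dots,M-1$),
\begin{equation*}
  \frac1M\sum_{j=0}^{M-1}\Big|t\Big(\tfrac{2\pi j}M\Big)\Big|^p\le c^{-p}\,\Vert t\Vert_{L_p(\T)}^p,\qquad c=c(p)>0,
\end{equation*}
with $c$ independent of $M$. This is the classical right-hand (discrete $\le$ continuous) MZ inequality, which one may cite or prove directly. The point to watch is that in our setting $t(\cdot)=T(\cdot,\psi)$ has degree $\le 6m-1$ while we sample at exactly $M=6m$ nodes $\phi_k=\pi k/(3m)=2\pi k/(6m)$ (and symmetrically $6n$ nodes of degree $6n-1$ in $\psi$): the number of nodes equals the degree plus one, the critical ratio, so I must verify that the constant does not degenerate as $m,n\to\infty$. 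This is the only genuine obstacle. It is handled by the estimate $|t(\phi_k)|^p\le\frac{2^{p-1}}{h}\int_{I_k}|t|^p+2^{p-1}(h/2)^p\max_{I_k}|t'|^p$ on each arc $I_k$ of length $h=2\pi/M$, followed by a local Nikolskii inequality on arcs of length $\sim 1/\deg t'$ and Bernstein's inequality $\Vert t'\Vert_p\le(M-1)\Vert t\Vert_p$; the resulting bound stays finite precisely because $h(M-1)=2\pi(M-1)/M<2\pi$.

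With the univariate inequality available, I would finish by iterating. For each fixed node $\phi_k$ the slice $\psi\mapsto T(\phi_k,\psi)$ is a cosine polynomial of degree $\le 6n-1$, so applying the univariate bound in $\psi$ controls the inner sum by $\frac1{2\pi}\int_0^{2\pi}|T(\phi_k,\psi)|^p\,d\psi$. Interchanging the finite sum over $k$ with the $\psi$-integral and applying the univariate bound again in $\phi$ to the slices $\phi\mapsto T(\phi,\psi)$ (degree $\le 6m-1$) yields
\begin{equation*}
  \frac1{36mn}\sum_{(k,l)\in\I_{m,n}'}\Big|T\Big(\tfrac{\pi k}{3m},\tfrac{\pi l}{3n}\Big)\Big|^p\le c^{-2p}\,\Vert T\Vert_{L_p(\T^2)}^p .
\end{equation*}
Taking $p$-th roots and setting $c_p=c^2$ gives the stated inequality. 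The evenness of $T$ in each variable and the exact divisibility of the grid into $6m$ and $6n$ points are used only to make the two reductions clean; no $K_p(m,n)$ factor appears because we are proving the ``easy'' right-hand direction.
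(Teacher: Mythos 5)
Your proposal is correct and takes essentially the same route as the paper: there, Lemma~\ref{lemmz2T} is obtained in one line by tensorizing the univariate right-hand MZ inequality~\eqref{mztrig} for $6n$ equidistant nodes, which the paper simply cites from Xu's work on generalized Marcinkiewicz--Zygmund inequalities, exactly the Fubini-type iteration in each variable that you carry out explicitly. The only difference is that you also sketch a self-contained proof of the univariate inequality at the critical sampling ratio (via the local integral bound, local Nikolskii, and Bernstein), a step the paper delegates entirely to the reference.
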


\begin{proof}
The proof easily follows from the well-known one-dimensional result, see, e.g.,~\cite{Xu}:
\begin{equation}\label{mztrig}
  \bigg(\frac1{6n}\sum_{k=0}^{6n-1}\Big|T\Big(\frac{\pi k}{3n}\Big)\Big|^p\bigg)^\frac1p\le C_p\Vert T\Vert_{L_p(\T)}, \quad T\in \Tp_{6n},
\end{equation}
where $\Tp_n$ is the set of all real-valued univariate trigonometric polynomials of order at most~$n$.
\end{proof}

\subsection{Weighted averaged moduli of smoothness}
The following  weighted Ditzian-Totik modulus of smoothness of $f\in L_{p,u}(J)$ was considered in~\cite[see (6.1.9)]{DT}
\begin{equation}\label{wem}
  \begin{split}
    \w_\nu^{*\vp}(f,t)_{p,u}=&\(\frac1t\int_0^t\int_{-1+t^*}^{1-t^*}|u(x)\D_{\tau\vp}^\nu(f,x)|^p {\rm d}x{\rm d}\tau\)^\frac1p\\
     &+\(\frac1{t^*}\int_0^{t^*}\int_{-1}^{-1+At^*}|u(x)\overrightarrow{\D}_{\tau}^\nu(f,x)|^p {\rm d}x{\rm d}\tau\)^\frac1p\\
     &+\(\frac1{t^*}\int_0^{t^*}\int_{1-At^*}^{1}|u(x)\overleftarrow{\D}_{\tau}^\nu(f,x)|^p {\rm d}x{\rm d}\tau\)^\frac1p
  \end{split}
\end{equation}
(recall that $t^*=2\nu^2t^2$ and $A$ is some absolute constant).

For $\d>0$, we denote
\begin{equation*}
  \begin{split}
     \mathcal{D}_\d=\left\{x\in \R\,:\, 1-\frac{\d \vp(x)}{2}\ge |x|\right\}\setminus \{\pm 1\}=\left\{|x|\le\frac{4-\d^2}{4+\d^2}\right\}.
   \end{split}
\end{equation*}
In~\cite{KLS}, in connection with the modulus~\eqref{newmod}, it was introduced a  "modification"\, of the  averaged modulus of smoothness~\eqref{wem}. For $ f\in L_{p,\vp^\a}(J)$, it is defined by
\begin{equation}\label{avmW}
      \w_{\nu,\a}^{*\vp}(f,t)_p=\(\frac1{t}\int_0^{t} \Vert \mathcal{W}_{\nu \tau}^{\a}\D_{\tau\vp}^\nu (f,\cdot)\Vert_{L_{p}(\mathcal{D}_{\nu\tau})}^p {\rm d}\tau\)^\frac1p.
\end{equation}
In contrast to~\eqref{wem}, this modulus of smoothness is more convenient for the goal of this paper.

We have the following inequalities for the moduli of smoothness introduced above.

\begin{lemma}\label{mmm}
Let $\nu\in \N$, $\a>0$, and $\vp^\a f\in L_p(J)$, $1\le p<\infty$. Then
\begin{equation}\label{eqmmm}
  \w_\nu^\vp (f,t)_{p,\vp^\a}\le c\w_\nu^{*\vp} (f,t)_{p,\vp^\a}\le C\w_{\nu,\a}^{*\vp} (f,t)_{p}\le C\w_{\nu,\a}^{\vp} (f,t)_{p},
\end{equation}
where the constants $c$ and $C$  do not depend on $t$ and $f$.
\end{lemma}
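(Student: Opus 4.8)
The plan is to establish the chain one inequality at a time, since the outer two are standard and the middle one carries the real content. For the leftmost inequality $\w_\nu^\vp(f,t)_{p,\vp^\a}\le c\,\w_\nu^{*\vp}(f,t)_{p,\vp^\a}$ I would invoke the classical equivalence of the supremum-type Ditzian--Totik modulus \eqref{DTmod} with its integral-averaged companion \eqref{wem}; this is precisely the content of the relevant results in \cite{DT} (the averaged modulus being their (6.1.9)), valid for the Jacobi-type weight $u=\vp^\a$ with real $\a>0$. The only nontrivial direction here is that the supremum is dominated by the average, which follows from the averaging identity expressing $\D_{h\vp}^\nu f$ through $\D_{\tau\vp}^\nu f$ for $\tau$ comparable to $h$. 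The rightmost inequality $\w_{\nu,\a}^{*\vp}(f,t)_p\le\w_{\nu,\a}^\vp(f,t)_p$ is immediate: for each fixed $\tau\le t$ one has $\Vert\mathcal{W}_{\nu\tau}^\a\D_{\tau\vp}^\nu f\Vert_{L_p(\mathcal{D}_{\nu\tau})}\le\Vert\mathcal{W}_{\nu\tau}^\a\D_{\tau\vp}^\nu f\Vert_{L_p(J)}\le\w_{\nu,\a}^\vp(f,t)_p$ because $\mathcal{D}_{\nu\tau}\subset J$, and averaging the $p$-th power over $\tau\in[0,t]$ is bounded by the supremum, which is \eqref{newmod}.

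The crux is the middle inequality $\w_\nu^{*\vp}(f,t)_{p,\vp^\a}\le C\,\w_{\nu,\a}^{*\vp}(f,t)_p$, which I would split into the interior term of \eqref{wem} and its two boundary terms. For the interior part, I would first check that $[-1+t^*,1-t^*]\subset\mathcal{D}_{\nu\tau}$ for every $\tau\le t$ by comparing the endpoint $\tfrac{4-(\nu\tau)^2}{4+(\nu\tau)^2}$ of $\mathcal{D}_{\nu\tau}$ with $1-t^*$, where $t^*=2\nu^2t^2$. On this set the two weights are comparable: since $\vp(x)\ge\sqrt2\,\nu\tau$ there, the identity $\mathcal{W}_{\nu\tau}(x)^2=\vp(x)^2-\nu\tau\,\vp(x)+\tfrac{(\nu\tau)^2}4\vp(x)^2$ yields $\mathcal{W}_{\nu\tau}(x)\asymp\vp(x)$ uniformly in $x$ and $\tau$. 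Hence, after replacing $\vp^\a$ by $\mathcal{W}_{\nu\tau}^\a$ and enlarging the domain to $\mathcal{D}_{\nu\tau}$, the interior part of $\w_\nu^{*\vp}(f,t)_{p,\vp^\a}$ is bounded by a constant multiple of $\w_{\nu,\a}^{*\vp}(f,t)_p$ as in \eqref{avmW}.

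The main obstacle is the two boundary terms, in which the fixed-step forward and backward differences $\overrightarrow{\D}_\tau^\nu,\overleftarrow{\D}_\tau^\nu$ appear near $\pm1$, whereas \eqref{avmW} only sees the $\vp$-weighted symmetric difference on $\mathcal{D}_{\nu\tau}$. Near $-1$ I would match the nodes $\{x+i\tau\}_{i=0}^\nu$ of $\overrightarrow{\D}_\tau^\nu(f,x)$ with those of a symmetric difference by setting $\xi=x+\nu\tau/2$ and $\sigma=\tau/\vp(\xi)$, so that $\overrightarrow{\D}_\tau^\nu(f,x)=\D_{\sigma\vp}^\nu(f,\xi)$ exactly, with $\xi\in\mathcal{D}_{\nu\sigma}$. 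A short computation using $\vp(\xi)^2\approx2(1+x)+\nu\tau$ gives $\mathcal{W}_{\nu\sigma}(\xi)^2\approx\vp(x)^2+\tfrac{(\nu\tau)^2}4\ge\vp(x)^2$, so the weight only improves, $\vp^\a(x)\le C\,\mathcal{W}_{\nu\sigma}^\a(\xi)$. Changing variables $(x,\tau)\mapsto(\xi,\sigma)$ and controlling the Jacobian then bounds each boundary term by $\w_{\nu,\a}^{*\vp}(f,\sqrt{2\nu}\,t)_p$; since the range of $\sigma$ is inflated only by the fixed factor $\sqrt{2\nu}$, I would conclude by the standard property that enlarging the step of a modulus of smoothness by a constant multiplies it only by a constant, reducing to $\w_{\nu,\a}^{*\vp}(f,t)_p$. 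The delicate point throughout is the bookkeeping of this substitution and the step inflation near the endpoints; this is exactly the boundary analysis for which the modification \eqref{avmW} was designed in \cite{KLS}, and one may alternatively cite the corresponding equivalence established there.
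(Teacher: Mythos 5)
Your proposal is correct and takes essentially the same route as the paper, whose own proof consists of three citations — the scheme in \cite[pp.~56--57]{DT} for the first inequality, the proof of Lemma~6.1 in \cite{KLS} for the second, and "obvious" for the third — and your interior/boundary analysis for the middle inequality is precisely the \cite{KLS} scheme written out (domain inclusion $[-1+t^*,1-t^*]\subset\mathcal{D}_{\nu\tau}$, weight comparability $\mathcal{W}_{\nu\tau}\asymp\vp$ there, and the substitution $\xi=x+\nu\tau/2$, $\sigma=\tau/\vp(\xi)$ with Jacobian exactly $\vp(\xi)^{-1}$ near the endpoints). One minor slip: the exact identity is $\mathcal{W}_{\nu\sigma}(\xi)^2=\bigl(1-x-\nu\tau\bigr)\bigl(1+x\bigr)=\vp(x)^2-\nu\tau(1+x)\le\vp(x)^2$, so the weight does not "improve" as you claim via $\vp(x)^2+\tfrac{(\nu\tau)^2}{4}$, but it remains comparable ($\ge c\,\vp(x)^2$ for $x\in[-1,-1+At^*]$, $\tau\le t^*$, $t$ small), so the needed bound $\vp^\a(x)\le C\,\mathcal{W}_{\nu\sigma}^\a(\xi)$ and hence your whole argument still stand.
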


\begin{proof}
  The scheme of proving the first inequality in~\eqref{eqmmm} can be found in~\cite[pp.~56--57]{DT}. The second inequality can be shown analogously to the proof of Lemma~6.1 in~\cite{KLS}. The third inequality is obvious.
\end{proof}

Now let us introduce bivariate analogues of the modulus of smoothness~\eqref{avmW}.

For $f\,:\, J^2\to \R$ and $i,j=1,2$, where $i\neq j$, we define the partial averaged modulus of smoothness by
\begin{equation*}
  \w_{\nu,\a}^{*\vp,j}(f,t)_p=\(\frac1t\int_0^t {\rm d}\tau\int_{\mathcal{D}_{\nu\tau}}{\rm d} x_j\int_{-1}^1|\mathcal{W}_{\nu \tau}^{\a}(x_j) \D_{\tau\vp(x_j)}^{\nu,j} (f,x_1,x_2)|^p\frac{{\rm d} x_i}{\sqrt{1-x_i^2}}\)^\frac1p.
\end{equation*}
A corresponding mixed averaged modulus of smoothness is given by
\begin{equation*}
\begin{split}
    \w_{\nu,\a_1,\a_1}^{*\vp}&(f,t_1,t_2)_p\\
&=\(\frac1{t_1t_2}\int_0^{t_1}\int_0^{t_2} \Vert \mathcal{W}_{\nu \tau_1}^{\a_1,1}\mathcal{W}_{\nu \tau_2}^{\a_2,2} \D_{\tau_1\vp_1}^{\nu,1}\D_{\tau_2\vp_2}^{\nu,2} (f,\cdot)\Vert_{L_{p}(\mathcal{D}_{\nu\tau_1}\times \mathcal{D}_{\nu\tau_2})}^p {\rm d}\tau_1{\rm d}\tau_2\)^\frac1p.
\end{split}
\end{equation*}

\subsection{Weighted error of the best approximation by algebraic polynomials}

We will use the following Jackson-type inequalities for $E_n^\mathcal{P}(f)_{L_{p,u}(J)}$.

\begin{lemma}\label{J}
Let $0\le r<\nu$, $\a>0$, $f^{(r-1)}$ be locally absolutely continuous in $(-1,1)$, and $\vp^{\a+r}f^{(r)}\in L_p(J)$, $1\le p<\infty$. Then
\begin{equation}\label{Je1}
  E_n^\Pp(f)_{L_{p,\vp^\a}(J)}\le Cn^{-r} \w_{\nu-r}^\vp (f^{(r)},n^{-1})_{p,\vp^{\a+r}},\quad n>r.
\end{equation}
In particular,
\begin{equation}\label{Je2}
  E_n^\Pp(f)_{L_{p,\vp^\a}(J)}\le Cn^{-r} E_{n-r}^\Pp(f^{(r)})_{L_{p,\vp^{\a+r}}(J)},\quad n>r,
\end{equation}
where the constant $C$ does not depend on $n$ and $f$.
\end{lemma}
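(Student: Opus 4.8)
The plan is to obtain both inequalities inside the framework of weighted (Jacobi) Jackson theory for the symmetric Chebyshev-type weight $\vp^\a$, as developed in~\cite{DT} and refined in~\cite{KLS}, \cite{KLS14}. The estimate~\eqref{Je1} will be assembled from two ingredients. The first is the basic weighted Jackson inequality
\[
E_n^\Pp(g)_{L_{p,\vp^\b}(J)}\le C\,\w_{\mu}^\vp(g,n^{-1})_{p,\vp^\b},
\]
valid for every $\mu\in\N$ and every admissible exponent $\b>0$; for the symmetric weight $\vp^\b$ this is exactly the Ditzian--Totik Jackson theorem for Jacobi weights. The second is the weight-shifting derivative relation
\[
\w_{\nu}^\vp(f,t)_{p,\vp^\a}\le C\,t^{\,r}\,\w_{\nu-r}^\vp\bigl(f^{(r)},t\bigr)_{p,\vp^{\a+r}},
\]
which holds precisely under the stated hypotheses that $f^{(r-1)}$ be locally absolutely continuous on $(-1,1)$ and $\vp^{\a+r}f^{(r)}\in L_p(J)$ (the extra $\vp^r$ is the price of the $r$ differentiations). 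Applying the first ingredient with $g=f$, $\b=\a$, $\mu=\nu$, and then the second with $t=n^{-1}$, yields~\eqref{Je1}.

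For~\eqref{Je2} I would \emph{not} pass through~\eqref{Je1} by bounding the modulus by the best approximation: the naive inequality $\w_{\nu-r}^\vp(g,n^{-1})_{p,\vp^{\a+r}}\le CE_{n-r}^\Pp(g)_{L_{p,\vp^{\a+r}}}$ is false (already for $g(x)=x$ one has $\w_1^\vp(g,n^{-1})\asymp n^{-1}$ while $E_{n-1}^\Pp(g)=0$), so~\eqref{Je2} requires its own, Favard-type, argument. Let $Q\in\mathcal{P}_{n-r}$ realize $E_{n-r}^\Pp(f^{(r)})_{L_{p,\vp^{\a+r}}(J)}$, take $P\in\mathcal{P}_n$ to be an $r$-fold antiderivative of $Q$ with the $r$ free lower-order coefficients fixed so that $f-P$ has the correct vanishing at an interior point, and apply a weighted Hardy-type inequality to $h=f-P$, whose $r$-th derivative equals $f^{(r)}-Q$. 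This gives
\[
E_n^\Pp(f)_{L_{p,\vp^\a}(J)}\le \|\vp^\a(f-P)\|_{L_p(J)}\le C\,n^{-r}\,\|\vp^{\a+r}(f^{(r)}-Q)\|_{L_p(J)}=C\,n^{-r}E_{n-r}^\Pp\bigl(f^{(r)}\bigr)_{L_{p,\vp^{\a+r}}(J)},
\]
which is~\eqref{Je2}.

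The main obstacle is the interaction of differentiation with both the step weight $\vp$ and the singular weight near the endpoints $x=\pm1$, where $\vp$ vanishes: one must verify that the weighted Jackson inequality, the weight-shifting relation, and the Hardy-type inequality all hold with constants independent of $n$ and $f$ for the full range $\a>0$, including the boundary terms of the modulus~\eqref{DTmod} and the precise $n^{-r}$ gain in the Hardy estimate. These are exactly the places where the hypotheses—local absolute continuity of $f^{(r-1)}$ and $\vp^{\a+r}f^{(r)}\in L_p(J)$—are needed to justify the repeated integration against the weight. Once these weighted Bernstein/Jackson/Hardy estimates are in place, each being available in the Ditzian--Totik calculus and its extension in~\cite{KLS}, \cite{KLS14}, the assembly above is routine and all constants are tracked to be independent of $n$ and $f$.
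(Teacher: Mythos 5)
Your treatment of~\eqref{Je1} is consistent with the paper, which simply cites~\cite{KLS14} for that inequality; your two ingredients (the weighted Jackson inequality for the Jacobi-type weight $\vp^\a$ together with the weight-shifting estimate $\w_{\nu}^\vp(f,t)_{p,\vp^\a}\le Ct^{r}\w_{\nu-r}^\vp(f^{(r)},t)_{p,\vp^{\a+r}}$) are exactly the standard route by which the cited result is established, so there is nothing to object to there. For~\eqref{Je2}, your construction of $P\in\Pp_n$ as the $r$-fold antiderivative of the polynomial $Q\in\Pp_{n-r}$ realizing $E_{n-r}^\Pp(f^{(r)})_{L_{p,\vp^{\a+r}}(J)}$ is also precisely the paper's choice.

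The gap is the middle inequality of your final display, $\Vert\vp^\a(f-P)\Vert_{L_p(J)}\le Cn^{-r}\Vert\vp^{\a+r}(f^{(r)}-Q)\Vert_{L_p(J)}$, which you attribute to ``a weighted Hardy-type inequality.'' A Hardy-type inequality is a functional inequality valid for every $h$ with the prescribed interior vanishing, and $n$ enters its hypotheses nowhere, so it cannot produce the factor $n^{-r}$: applied to a fixed such $h$ for all $n$ it would force $\Vert\vp^\a h\Vert_{L_p(J)}=0$. The $n^{-r}$ gain does not come from the normalization of $P$; it comes from the fact that $h^{(r)}=f^{(r)}-Q$ is a best-approximation error and hence must oscillate in the $L_p$ sense, and extracting $n^{-r}$ from that is exactly the content of a weighted Favard-type inequality --- a statement about $E_n^\Pp(h)_{L_{p,\vp^\a}(J)}$, not about the norm $\Vert\vp^\a h\Vert_{L_p(J)}$ of your particular $h$. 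As written, your chain asserts that $P$ itself is a near-best approximant to $f$, which you have not proved and which does not follow from anything you invoke. The short repair is what the paper does: since $P\in\Pp_n$, one has $E_n^\Pp(f)_{L_{p,\vp^\a}(J)}=E_n^\Pp(f-P)_{L_{p,\vp^\a}(J)}$, and Ky's weighted Favard inequality (Theorem~1 of~\cite{Ku}), $E_n^\Pp(g)_{L_{p,\vp^\a}(J)}\le Cn^{-r}\Vert\vp^{r}g^{(r)}\Vert_{L_{p,\vp^\a}(J)}$, applied to $g=f-P$ finishes the proof. Equivalently --- and contrary to your remark that~\eqref{Je2} cannot be reached through~\eqref{Je1} --- you may apply~\eqref{Je1} to $f-P$ and then bound the modulus by the norm, $\w_{\nu-r}^\vp(f^{(r)}-Q,n^{-1})_{p,\vp^{\a+r}}\le C\Vert\vp^{\a+r}(f^{(r)}-Q)\Vert_{L_p(J)}$; your counterexample $g(x)=x$ only rules out substituting $f$ itself into~\eqref{Je1}, not $f-P$.
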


\begin{proof}
Inequality~\eqref{Je1} can be found in~\cite{KLS14}. The proof of~\eqref{Je2} follows from~\cite[Theorem~1]{Ku}. In particular, we have
$$
E_n^\Pp(f)_{L_{p,\vp^\a}(J)}=E_n^\Pp(f-P_n)_{L_{p,\vp^\a}(J)}\le Cn^{-r}\Vert \vp^r(f-P_n)^{(r)}\Vert_{L_{p,\vp^\a}(J)},
$$
where we choose the polynomial $P\in \Pp_n$ such that
$$
\Vert \vp^{r+\a}(f^{(r)}-P_n^{(r)})\Vert_{L_p(J)}=E_{n-r}^\Pp(f^{(r)})_{L_{p,\vp^{r+\a}}(J)}.
$$
\end{proof}

\subsection{De la Vall\'ee-Poussin means}

Let us consider the de la Vall\'ee-Poussin means of Lagrange interpolation polynomials (see~\cite{Sz}). These means will  play the role of  an intermediate approximant $\mathcal{L}_\n'$ in the application of Theorem~\ref{th1}.

We start form the one-dimensional case, in which de la Vall\'ee-Poussin means are given by
\begin{equation}\label{V1}
 \V_nf(\phi)=\V_n(f,\phi)=\frac1{3n}\sum_{k=0}^{6n-1}f\(t_k\)K_n\(\phi-t_k\),\quad t_k=\frac{\pi k}{3n},
\end{equation}
where
$$
K_n(\phi)=\frac12+\sum_{k=1}^{2n}\cos k\phi+\sum_{k=2n+1}^{4n-1}\frac{4n-k}{2n}\cos k\phi.
$$

Recall some basic properties of $\V_n(f,\phi)$ (see~\cite{Sz}).

\begin{lemma}\label{lem1v}
The following assertions hold:

  1)\quad $\deg \V_n f\le 4n-1$;

  2)\quad $\V_nf\(t_k\)=f\(t_k\),\quad k=0,\dots,6n-1$;

  3)\quad $\V_nT(\phi)=T(\phi)$ for any $T\in \Tp_{2n}$.

\end{lemma}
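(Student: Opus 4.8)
The plan is to verify the three assertions in order: the first is immediate from the shape of the kernel, the third follows from discrete orthogonality at the nodes, and the genuine work is concentrated in the interpolation property~(2), which I would reduce to a closed-form evaluation of $K_n$.

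For assertion~(1) I would simply observe that, by definition, $K_n$ is a cosine polynomial whose highest frequency is $4n-1$, so $K_n\in\Tp_{4n-1}$; a translation $K_n(\phi-t_k)$ does not raise the degree, so the finite linear combination $\V_nf$ again lies in $\Tp_{4n-1}$, whence $\deg\V_nf\le 4n-1$.

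The heart of the matter is assertion~(2), and the key idea is to put $K_n$ into closed form. First I would recognize the coefficient pattern ($1$ for $k\le 2n$ and $(4n-k)/(2n)$ for $2n<k\le 4n-1$) as the average of Dirichlet kernels $K_n=\frac{1}{2n}\sum_{N=2n}^{4n-1}D_N$, where $D_N(\phi)=\frac12+\sum_{k=1}^N\cos k\phi$. Writing this as a difference of partial sums $\sum_{N=0}^{4n-1}-\sum_{N=0}^{2n-1}$, applying the Fej\'er identity $\sum_{N=0}^{M-1}D_N(\phi)=\frac12\bigl(\sin(M\phi/2)/\sin(\phi/2)\bigr)^2$, and using $\sin^2 A-\sin^2 B=\sin(A+B)\sin(A-B)$, I expect to arrive at
\[
K_n(\phi)=\frac{\sin(3n\phi)\,\sin(n\phi)}{4n\,\sin^2(\phi/2)}.
\]
From this the interpolation property is transparent: at a node difference $t_j-t_k=\pi(j-k)/(3n)$ the factor $\sin(3n\phi)=\sin\bigl(\pi(j-k)\bigr)$ vanishes whenever $j\not\equiv k\pmod{6n}$, while the denominator stays nonzero, so $\frac{1}{3n}K_n(t_j-t_k)=0$; for $j\equiv k$ the limit $\phi\to 0$ (or a direct evaluation) gives $K_n(0)=3n$, hence $\frac{1}{3n}K_n(0)=1$. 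Thus $\frac{1}{3n}K_n(t_j-t_k)=\delta_{jk}$, and substituting into the definition of $\V_n$ yields $\V_nf(t_j)=f(t_j)$.

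For assertion~(3) I would pass to the exponential form $K_n(\phi)=\sum_{|\mu|\le 4n-1}a_\mu e^{\mathrm{i}\mu\phi}$, where $a_\mu=\tfrac12$ for $|\mu|\le 2n$, and expand an arbitrary $T=\sum_{|\nu|\le 2n}c_\nu e^{\mathrm{i}\nu\phi}\in\Tp_{2n}$ inside $\V_nT(\phi)=\frac{1}{3n}\sum_{k=0}^{6n-1}T(t_k)K_n(\phi-t_k)$. The inner sum over the $6n$ equally spaced nodes is governed by the discrete orthogonality relation $\frac{1}{6n}\sum_{k=0}^{6n-1}e^{\mathrm{i}(\nu-\mu)t_k}=1$ if $\nu\equiv\mu\pmod{6n}$ and $0$ otherwise. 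The crucial check is that for $|\nu|\le 2n$ no aliased frequency $\mu=\nu\pm 6n$ can fall into the range $|\mu|\le 4n-1$, so only $\mu=\nu$ survives; since the prefactor $\frac{6n}{3n}=2$ combines with $a_\nu=\tfrac12$ to give $1$, the identity collapses to $\V_nT=\sum_{|\nu|\le 2n}c_\nu e^{\mathrm{i}\nu\phi}=T$. The main obstacle is precisely the closed-form evaluation of $K_n$ in step~(2): once the de~la~Vall\'ee-Poussin kernel is recognized as an average of Dirichlet kernels and summed via the Fej\'er identity, both the node-annihilation and the normalization $K_n(0)=3n$ fall out immediately, and~(2) follows with no further estimates.
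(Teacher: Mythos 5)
Your proof is correct, and in fact the paper itself gives no proof of this lemma at all — it simply cites Szabados~\cite{Sz}, where these properties are established by exactly the route you take: summing the kernel in closed form, $K_n(\phi)=\sin(3n\phi)\sin(n\phi)/(4n\sin^2(\phi/2))$, to get the cardinality relation $\frac{1}{3n}K_n(t_j-t_k)=\delta_{jk}$ for (2), and discrete orthogonality of the exponentials over the $6n$ equispaced nodes, with the aliasing check $|\nu\pm 6n|>4n-1$ for $|\nu|\le 2n$, for (3). All your computations check out (including $K_n(0)=3n$, which can also be read off directly from the coefficients: $\frac12+2n+\sum_{j=1}^{2n-1}\frac{j}{2n}=3n$), so your argument supplies a complete self-contained verification of what the paper only quotes.
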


\begin{lemma}\label{lem2vE}
Let $f\in B(\T)$, $1\le p<\infty$, and $n\in \N$. Then
\begin{equation*}
  \Vert f-\V_nf\Vert_{L_p(\T)}\le C_p\widetilde{E}(f,\Tp_{n})_{L_p(\T)}.
\end{equation*}
\end{lemma}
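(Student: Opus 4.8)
The plan is to prove Lemma~\ref{lem2vE} by combining two ingredients: the defining properties of the de la Vall\'ee-Poussin means collected in Lemma~\ref{lem1v} (reproduction of low-order trigonometric polynomials, interpolation at the nodes $t_k$, and bounded degree), together with a Marcinkiewicz-Zygmund--type discretization of the $L_p(\T)$-norm. The key observation is that $\V_n$ interpolates $f$ at the $6n$ equidistant points $t_k=\pi k/(3n)$ and reproduces every $T\in \Tp_{2n}$. This is precisely the setting needed to apply the abstract Theorem~\ref{th2}, where the role of the polynomial space $\Pi_\n$ is played by $\Tp_{2n}$, the interpolation operator $\L_\n$ is $\V_n$, the node set is $\{t_k\}_{k=0}^{6n-1}$, and the weights are the uniform weights $\l_k=1/(6n)$.

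\medskip

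First I would verify that $\V_n$ fits the hypotheses of Theorem~\ref{th2}: by parts~2) and~3) of Lemma~\ref{lem1v} we have $\V_n f(t_k)=f(t_k)$ for all nodes and $\V_n T=T$ for all $T\in \Tp_{2n}$, so $\V_n$ is an interpolatory projection of the required type, and by part~1) its output lies in $\Tp_{4n-1}$. Next I would supply the two matching MZ inequalities associated with $(\{t_k\}, \{1/(6n)\}, \Tp_{2n}, \Vert\cdot\Vert_{L_p(\T)})$. The right-hand side inequality is exactly the one-dimensional estimate~\eqref{mztrig} used in the proof of Lemma~\ref{lemmz2T}, valid for any $T\in \Tp_{6n}\supset \Tp_{2n}$, giving the constant $M_1=1/C_p$. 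The left-hand side inequality, namely
\begin{equation*}
  \Vert T\Vert_{L_p(\T)}\le C_p'\(\frac1{6n}\sum_{k=0}^{6n-1}\Big|T\Big(\frac{\pi k}{3n}\Big)\Big|^p\)^\frac1p,\quad T\in \Tp_{2n},
\end{equation*}
is the companion Marcinkiewicz-Zygmund inequality for trigonometric polynomials of order $2n$ sampled at $6n$ equispaced points; since the oversampling ratio $6n/(2n)=3>1$ is strictly larger than one, this inequality holds with a constant depending only on $p$, by the classical theory (the same source~\cite{Xu}). This supplies $M_2=C_p'$.

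\medskip

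With both MZ inequalities in hand, Theorem~\ref{th2} applied to $\L_\n=\V_n$, $\Pi_\n=\Tp_{2n}$ yields immediately
\begin{equation*}
  \Vert f-\V_n f\Vert_{L_p(\T)}\le \(\frac{M_2}{M_1}+1\)\widetilde{E}(f,\Tp_{2n})_{L_p(\T)}\le C_p\,\widetilde{E}(f,\Tp_{n})_{L_p(\T)},
\end{equation*}
where in the last step I use the monotonicity $\widetilde{E}(f,\Tp_{2n})_{L_p(\T)}\le \widetilde{E}(f,\Tp_{n})_{L_p(\T)}$, which follows from the inclusion $\Tp_n\subset \Tp_{2n}$ (a one-sided approximant from $\Tp_n$ is also one from $\Tp_{2n}$). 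This is the desired bound, with $C_p=M_2/M_1+1$ absorbed into the claimed constant.

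\medskip

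The step I expect to be the main obstacle is the left-hand side MZ inequality for $\Tp_{2n}$ at the $6n$-point grid. Unlike the right-hand side inequality~\eqref{mztrig}, which the paper has already invoked verbatim, the lower bound on $\Vert T\Vert_{L_p(\T)}$ by the discrete $\ell_p$-sum is not stated explicitly in the excerpt and must either be cited from the standard Marcinkiewicz-Zygmund literature or derived. The clean way to handle it is to note that $\V_n$ itself acts as a bounded reconstruction operator: for $T\in \Tp_{2n}$ we have $T=\V_n T$, and expanding $\V_n T$ through~\eqref{V1} expresses $T$ as a linear combination of the sampled values $T(t_k)$ against the kernel $K_n$; estimating the $L_p$-norm of this representation via Young's inequality and the uniform $L_1$-boundedness of $K_n/(3n)$ then produces exactly the left-hand inequality with a $p$-independent structure. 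Thus the de la Vall\'ee-Poussin kernel does double duty, both defining the operator and furnishing the quantitative lower MZ bound, which is the conceptual crux of the argument.
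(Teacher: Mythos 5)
Your overall strategy---one-sided approximants, interpolation at the nodes $t_k$, reproduction of $\Tp_{2n}$, and a discrete--continuous norm comparison---is exactly the skeleton of the paper's proof, but the step where you invoke Theorem~\ref{th2} as a black box is not legitimate, and this is a genuine gap. Theorem~\ref{th2} is stated for $\L_\n f=L(f,X_\n,\cdot)$, the Lagrange interpolant \emph{in the space} $\Pi_\n$; its proof applies the left-hand MZ inequality \eqref{MZa} to the difference $q_\n-\L_\n f$, which must therefore lie in $\Pi_\n$. With your choice $\Pi_\n=\Tp_{2n}$ this fails: by Lemma~\ref{lem1v}, item~1, $\V_n f\in\Tp_{4n-1}$, not $\Tp_{2n}$, so $q_n-\V_n f\in\Tp_{4n-1}$ and the left-hand MZ inequality for $\Tp_{2n}$ at the $6n$ points cannot be applied to it. Nor can you repair this by enlarging $\Pi_\n$ to $\Tp_{4n-1}$: the left-hand MZ inequality for $\Tp_{4n-1}$ at the equispaced nodes $t_k=\pi k/(3n)$ is simply false, since $T(\phi)=\sin(3n\phi)\in\Tp_{3n}\subset\Tp_{4n-1}$ vanishes at every node while $\Vert T\Vert_{L_p(\T)}\ne 0$. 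So the inequality you flag as ``the main obstacle'' is not merely unproven in the excerpt---in the form you need it (applicable to $q_n-\V_n f$) it is not an MZ inequality for a polynomial class at all.

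The fix is the one your final paragraph gropes toward, but it must be stated as a bound on the \emph{operator} $\V_n$ acting on arbitrary bounded functions, not as an MZ inequality for $\Tp_{2n}$: one needs
\begin{equation*}
  \Vert \V_n g\Vert_{L_p(\T)}\le C_p\(\frac1n\sum_{k=0}^{6n-1}|g(t_k)|^p\)^\frac1p\quad\text{for all } g\in B(\T),
\end{equation*}
which is precisely inequality \eqref{newnew} in the paper (proved along the lines of Theorem~3.2.3 in~\cite{MM}; note that a naive Young-type argument needs the \emph{discrete} uniform bound $\sup_\phi\frac1{3n}\sum_k|K_n(\phi-t_k)|\le C$, a statement about the kernel's decay, not just $\Vert K_n\Vert_{L_1}\le C$). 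With this in hand one does not cite Theorem~\ref{th2} but repeats its proof: write $q_n-\V_n f=\V_n(q_n-f)$ (valid since $q_n\in\Tp_n\subset\Tp_{2n}$ and $\V_n$ reproduces $\Tp_{2n}$), apply the displayed operator bound, use $q_n(t_k)\le f(t_k)\le Q_n(t_k)$ to replace $|q_n(t_k)-f(t_k)|$ by $|q_n(t_k)-Q_n(t_k)|$, and finish with the right-hand MZ inequality \eqref{mztrig} applied to $q_n-Q_n\in\Tp_n\subset\Tp_{6n}$. That is the paper's argument verbatim; your proposal contains all of its ingredients but misassembles them by routing through an abstract theorem whose hypotheses $\V_n$ does not satisfy.
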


\begin{proof}
Let $q_n, Q_n\in \Tp_n$ be such that $q_n(t)\le f(t)\le Q_n(t)$ and $\Vert Q_n-q_n\Vert_{L_p(\T)}=\widetilde{E}(f,\Tp_{n})_{L_p(\T)}$. Similarly to~\eqref{eq8} and~\eqref{eq9}, we derive
\begin{equation}\label{+Dop1}
  \Vert f-\V_nf\Vert_{L_p(\T)}\le \widetilde{E}(f,\Tp_{n})_{L_p(\T)}+\Vert q_n-\V_nf\Vert_{L_p(\T)}.
\end{equation}
Next, using Lemma~\ref{lem1v}, item 3, the inequality
\begin{equation}\label{newnew}
  \Vert \V_n f\Vert_{L_p(\T)} \le C_p\(\frac 1n\sum_{k=0}^{6n-1} |f(t_k)|^p\)^\frac1p,
\end{equation}
which can be proved similarly to Theorem~3.2.3 in~\cite{MM}, and inequality~\eqref{mztrig}, we obtain
\begin{equation}\label{+Dop2}
  \begin{split}
    \Vert q_n-\V_nf\Vert_{L_p(\T)}&=\Vert \V_n(q_n-f)\Vert_{L_p(\T)}\le C\left(\frac 1n\sum_{k=0}^{6n-1}|q_n(t_k)-f(t_k)|^p\right)^\frac1p\\
    &\le C\left(\frac 1n\sum_{k=0}^{6n-1}|q_n(t_k)-Q_n(t_k)|^p\right)^\frac1p\\
    &\le C\Vert q_n-Q_n\Vert_{L_p(\T)}=C\widetilde{E}(f,\Tp_{n})_{L_p(\T)}.
  \end{split}
\end{equation}
Finally, combining~\eqref{+Dop1} and~\eqref{+Dop2}, we proved the lemma.
\end{proof}

\begin{lemma}\label{lem2v}
  Let $f\in B(\T)$, $1\le p<\infty$, and $n\in \N$.

1) If $f$ is an absolutely continuous function on $\T$ and $f'\in L_p(\T)$, then
\begin{equation*}
  \Vert f-\V_nf\Vert_{L_p(\T)}\le {C_p}{n^{-1}} {E}(f',\Tp_n)_{L_p(\T)}.
\end{equation*}

2) If $f\in BV(\T)$, then
\begin{equation*}
  \Vert f-\V_nf\Vert_{L_p(\T)}\le {C_p}{n^{-\frac1p}}V_\T(f).
\end{equation*}
\end{lemma}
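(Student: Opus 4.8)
The plan is to build both estimates on the two structural facts about $\V_n$ in Lemma~\ref{lem1v}, namely that $\V_n T=T$ for every $T\in\Tp_{2n}$, together with the discrete $L_p$-bound~\eqref{newnew}. For any $S\in\Tp_{2n}$ we then have $\V_n S=S$, whence
\begin{equation*}
 \Vert f-\V_n f\Vert_{L_p(\T)}\le \Vert f-S\Vert_{L_p(\T)}+\Vert \V_n(f-S)\Vert_{L_p(\T)}\le \Vert f-S\Vert_{L_p(\T)}+C_p\Big(\frac1n\sum_{k=0}^{6n-1}|(f-S)(t_k)|^p\Big)^{1/p}.
\end{equation*}
Hence it suffices to pick $S$ with a good $L_p$-rate and to control the discrete node sum of $f-S$. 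For the latter I would prove two elementary sampling inequalities (comparing $u(t_k)$ with the mean of $u$ over $[t_k,t_{k+1}]$ and bounding the defect by $\int_{t_k}^{t_{k+1}}|u'|$, resp.\ by the variation of $u$ on that interval): for $u\in AC(\T)$,
\begin{equation*}
 \Big(\frac1n\sum_{k=0}^{6n-1}|u(t_k)|^p\Big)^{1/p}\le C_p\Big(\Vert u\Vert_{L_p(\T)}+n^{-1}\Vert u'\Vert_{L_p(\T)}\Big),
\end{equation*}
and for $u\in BV(\T)$ the same bound with $n^{-1}\Vert u'\Vert_{L_p(\T)}$ replaced by $n^{-1/p}V_\T(u)$, using that the local variations over the $6n$ intervals sum to $V_\T(u)$.

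For part~1 I would first lower the right-hand side from $\Vert f'\Vert$ to $E(f',\Tp_n)$. Let $\tau\in\Tp_n$ be the best $L_p$-approximant of $f'$; subtracting its mean makes $\tau$ have mean zero while keeping $\Vert f'-\tau\Vert_{L_p(\T)}\le 2E(f',\Tp_n)_{L_p(\T)}$ (the mean is controlled by H\"older, since $\int_\T f'=0$). Let $G\in\Tp_n$ be the $2\pi$-periodic antiderivative of $\tau$; as $G\in\Tp_{2n}$ we have $\V_n G=G$, so $f-\V_n f=(f-G)-\V_n(f-G)$ and it remains to prove the Jackson-type estimate $\Vert u-\V_n u\Vert_{L_p(\T)}\le C_p n^{-1}\Vert u'\Vert_{L_p(\T)}$ for $u=f-G$, which has $\Vert u'\Vert_{L_p(\T)}=\Vert f'-\tau\Vert_{L_p(\T)}\le 2E(f',\Tp_n)_{L_p(\T)}$. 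To prove this estimate I would take $S=\mathcal V u$, where $\mathcal V$ is a fixed de la Vall\'ee-Poussin convolution operator of degree $\le 2n$ that reproduces $\Tp_n$ and satisfies $\Vert\mathcal V\Vert_{L_p\to L_p}\le C$: then $S\in\Tp_{2n}$, and $\Vert u-S\Vert_{L_p(\T)}\le C E(u,\Tp_n)_{L_p(\T)}\le C n^{-1}\Vert u'\Vert_{L_p(\T)}$ by the Jackson--Favard inequality, while, crucially, $(u-S)'=u'-\mathcal V u'$ because convolution commutes with differentiation, so $\Vert(u-S)'\Vert_{L_p(\T)}\le C E(u',\Tp_n)_{L_p(\T)}\le C\Vert u'\Vert_{L_p(\T)}$. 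Substituting into the two displayed inequalities yields $\Vert u-\V_n u\Vert_{L_p(\T)}\le C_p n^{-1}\Vert u'\Vert_{L_p(\T)}$, as required. The main obstacle is exactly this simultaneous control of $S$ and $S'$: an arbitrary near-best $L_p$-approximant of $u$ need not have $S'$ close to $u'$, and comparing the derivatives of two $L_p$-close polynomials in $\Tp_{2n}$ via Bernstein's inequality would cost a factor $n$ and destroy the rate; using a convolution operator, which differentiates through the kernel, is what circumvents this.

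For part~2 the quickest route is Lemma~\ref{lem2vE}, which already gives $\Vert f-\V_n f\Vert_{L_p(\T)}\le C_p\widetilde E(f,\Tp_n)_{L_p(\T)}$; it then remains to insert the classical estimate of the best one-sided trigonometric $L_p$-approximation of a function of bounded variation, $\widetilde E(f,\Tp_n)_{L_p(\T)}\le C_p n^{-1/p}V_\T(f)$ (see~\cite{SP},~\cite{HI},~\cite{H}). Alternatively, and fully self-containedly, part~2 follows from the scheme of part~1 with $S=\mathcal V f$: here $\Vert f-S\Vert_{L_p(\T)}\le C E(f,\Tp_n)_{L_p(\T)}\le C n^{-1/p}V_\T(f)$, the last inequality being Jackson's theorem together with the elementary bound $\int_\T|f(\cdot+h)-f|^p\,{\rm d}\phi\le |h|\,V_\T(f)^p$ for $f\in BV(\T)$, while a convolution operator is variation-bounded, $V_\T(\mathcal V f)\le\Vert\mathcal V\Vert\,V_\T(f)$, so that $V_\T(f-S)\le C V_\T(f)$; feeding this and the $BV$ sampling inequality into the first display gives $\Vert f-\V_n f\Vert_{L_p(\T)}\le C_p n^{-1/p}V_\T(f)$.
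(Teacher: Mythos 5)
Your proposal is correct, but for part 1 it takes a genuinely different route from the paper. The paper's own proof is two lines: it feeds Lemma~\ref{lem2vE}, i.e. $\Vert f-\V_nf\Vert_{L_p(\T)}\le C_p\widetilde{E}(f,\Tp_n)_{L_p(\T)}$, into two known estimates for the best \emph{one-sided} trigonometric approximation from Sendov--Popov~\cite{SP}, namely $\widetilde{E}(f,\Tp_n)_{L_p(\T)}\le C_p n^{-1}E(f',\Tp_n)_{L_p(\T)}$ (Theorem~8.1 there) and $\widetilde{E}(f,\Tp_n)_{L_p(\T)}\le C_p n^{-1/p}V_\T(f)$ (Theorem~8.2 with formula~(7)); so your ``quickest route'' for part~2 is exactly the paper's argument, applied to both parts. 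For part~1 you instead bypass one-sided approximation entirely: you combine the reproduction property $\V_nS=S$ for $S\in\Tp_{2n}$ (Lemma~\ref{lem1v}) with the discrete bound~\eqref{newnew}, prove elementary sampling inequalities controlling $\bigl(\frac1n\sum_k|u(t_k)|^p\bigr)^{1/p}$ by $\Vert u\Vert_{L_p}+n^{-1}\Vert u'\Vert_{L_p}$ (resp.\ $n^{-1/p}V_\T(u)$), and resolve the genuine difficulty of \emph{simultaneous} approximation of $u$ and $u'$ by choosing $S=\mathcal{V}u$ with a convolution-type de la Vall\'ee-Poussin operator, which differentiates through the kernel; together with the mean-zero antiderivative trick reducing $\Vert f'\Vert_{L_p}$ to $E(f',\Tp_n)_{L_p}$, all steps check out (the averaging bounds $|u(t_k)|\le|I_k|^{-1}\int_{I_k}|u|+\int_{I_k}|u'|$ and $|u(t_k)|\le|I_k|^{-1}\int_{I_k}|u|+V_{I_k}(u)$, H\"older, and $\sum_k V_{I_k}(u)^p\le V_\T(u)^p$ are all valid). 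What each approach buys: the paper's proof is shorter and consistent with its one-sided-approximation theme (cf.\ Theorem~\ref{thos}), but delegates the substance to~\cite{SP}; yours is essentially self-contained modulo~\eqref{newnew}, Favard's and Jackson's inequalities, and correctly diagnoses why a naive near-best approximant fails (Bernstein would cost a factor $n$) --- a point the one-sided machinery conceals.
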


\begin{proof}
The assertions of the lemma follow from Lemma~\ref{lem2vE} and the following two inequalities:
\begin{equation*}
  \widetilde{E}(f,\Tp_n)_{L_p(\T)}\le {C_p}{n^{-1}} {E}(f',\Tp_n)_{L_p(\T)},
\end{equation*}
\begin{equation*}
  \widetilde{E}(f,\Tp_n)_{L_p(\T)}\le {C_p}{n^{-\frac1p}}V_\T(f).
\end{equation*}
The first inequality can be found, e.g., in~\cite[Theorem 8.1]{SP}. The second one follows from~\cite[see Theorem 8.2 and the formula  (7) on p.~10]{SP}.
\end{proof}

In what follows we denote
$$
f^*(\phi)=f(\cos \phi).
$$

\begin{lemma}\label{lemVf*}
  Let $r, \nu\in \N$,  $f^{(r-1)}$ be locally absolutely continuous on $(-1,1)$, and $\vp^{r}f^{(r)}\in L_{p,w}(J)$, $1\le p<\infty$. Then
\begin{equation}\label{vf*0}
  \Vert f^* -\mathcal{V}_nf^*\Vert_{L_p(\T)}\le Cn^{-r} \w_{\nu,r-1/p}^{*\vp} (f^{(r)},n^{-1})_{p},\quad n>r,
\end{equation}
where the constant $C$ does not depend on $n$ and $f$.
\end{lemma}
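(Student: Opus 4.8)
The plan is to reduce the trigonometric de la Vall\'ee--Poussin approximation of the even function $f^*(\phi)=f(\cos\phi)$ to a \emph{weighted algebraic} best approximation on $J$, and then to feed this into the Jackson-type inequality of Lemma~\ref{J} together with the modulus comparison of Lemma~\ref{mmm}. Since $r\ge 1$, the first move is to verify that $f^*$ satisfies the hypotheses of the first-order estimate in Lemma~\ref{lem2v}, item~1, namely $f^*\in AC(\T)$ and $(f^*)'\in L_p(\T)$. Using $(f^*)'(\phi)=-\sin\phi\,f'(\cos\phi)=-\vp(\cos\phi)f'(\cos\phi)$ and the substitution $x=\cos\phi$, one gets $\Vert (f^*)'\Vert_{L_p(\T)}^p=2\int_{-1}^1|f'(x)|^p\vp(x)^{p-1}\,{\rm d}x$, so the condition reduces to $\vp f'\in L_{p,w}(J)$ (and this $L_1$-integrability of $(f^*)'$ automatically forces $f^*$ to be absolutely continuous on the whole circle, including at $\phi=0,\pi$). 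For $r=1$ this is exactly the hypothesis; for $r\ge 2$ it must be deduced from the top-order assumption $\vp^rf^{(r)}\in L_{p,w}(J)$ and the local absolute continuity of $f^{(r-1)}$ by integrating down from order $r$ to order $1$ with a weighted Hardy-type inequality near $x=\pm1$. I expect this endpoint integrability bookkeeping to be the most delicate routine point.

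With Lemma~\ref{lem2v}, item~1, in hand we obtain $\Vert f^*-\V_nf^*\Vert_{L_p(\T)}\le C_pn^{-1}E((f^*)',\Tp_n)_{L_p(\T)}$, and the next step is to estimate the trigonometric best approximation of the odd function $(f^*)'$. Since sine polynomials of degree at most $n$ are precisely the functions $\sin\phi\,R(\cos\phi)$ with $R\in\Pp_{n-1}$ (via $\sin(k\phi)=\sin\phi\,U_{k-1}(\cos\phi)$, with $U_{k-1}$ the Chebyshev polynomial of the second kind), restricting the infimum to this subclass gives $E((f^*)',\Tp_n)_{L_p(\T)}\le \inf_{R\in\Pp_{n-1}}\Vert \sin\phi\,(f'(\cos\phi)-R(\cos\phi))\Vert_{L_p(\T)}$. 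Evaluating the right-hand side by the same substitution $x=\cos\phi$ and using $\vp^{p-1}=(\vp^{1-1/p})^p$ yields $E((f^*)',\Tp_n)_{L_p(\T)}\le 2^{1/p}E_{n-1}^\Pp(f')_{L_{p,\vp^{1-1/p}}(J)}$, which is now a purely algebraic weighted quantity. I regard this reduction of the trigonometric approximation of the odd function $(f^*)'$ to a weighted algebraic approximation of $f'$ as the conceptual heart of the argument.

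It remains to extract the remaining $r-1$ orders of smoothness, and the key point is that the weight exponents align. Applying Lemma~\ref{J} to $g=f'$ with weight exponent $\a=1-1/p$, differentiation order $r-1$ (so that $g^{(r-1)}=f^{(r)}$) and Jackson order $\nu+r-1$ gives $E_{n-1}^\Pp(f')_{L_{p,\vp^{1-1/p}}(J)}\le C\,n^{-(r-1)}\w_\nu^\vp(f^{(r)},n^{-1})_{p,\vp^{r-1/p}}$, because $(1-1/p)+(r-1)=r-1/p$; here the integrability $\vp^{(1-1/p)+(r-1)}g^{(r-1)}=\vp^{r-1/p}f^{(r)}\in L_p(J)$ required by Lemma~\ref{J} is literally the assumption $\vp^rf^{(r)}\in L_{p,w}(J)$, and the condition $n>r$ guarantees $n-1>r-1$. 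Combining with the factor $n^{-1}$ from the first step produces $n^{-r}$, and finally Lemma~\ref{mmm} (with $\a=r-1/p$) passes from $\w_\nu^\vp(f^{(r)},n^{-1})_{p,\vp^{r-1/p}}$ to the averaged modulus $\w_{\nu,r-1/p}^{*\vp}(f^{(r)},n^{-1})_p$, which is exactly the asserted bound. Once the weight powers are matched, the remaining derivatives thus come for free from the algebraic Jackson inequality.
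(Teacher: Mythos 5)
Your core argument (the second and third paragraphs of your proposal) coincides with the paper's proof of Lemma~\ref{lemVf*}: the paper likewise invokes Lemma~\ref{lem2v}, item~1, to reduce the problem to $n^{-1}E((f^*)',\Tp_n)_{L_p(\T)}$; it bounds that best approximation by testing against the odd trigonometric polynomial $\sin\phi\,P_{n-1}(\cos\phi)$, where $P_{n-1}$ realizes $E_{n-1}^\Pp(f')_{L_{p,w\vp}(J)}$ and $w\vp=\vp^{1-1/p}$ (the paper writes this as an equality of norms, which absorbs your harmless factor $2^{1/p}$); it then applies the Jackson inequality~\eqref{Je1} with exactly your parameter bookkeeping ($\a=1-1/p$, differentiation order $r-1$, modulus order chosen so the resulting weight is $\vp^{(1-1/p)+(r-1)}=\vp^{r-1/p}$, and $n-1>r-1$ from $n>r$); and it finishes with Lemma~\ref{mmm}. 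So the conceptual heart you identify is precisely the paper's.

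The one divergence is your first step, and there your proposal is weaker than the paper as written. The paper never verifies $f^*\in AC(\T)$ and $(f^*)'\in L_p(\T)$ directly from the hypotheses; instead it first reduces to the case where $f^{(r-1)}$ is absolutely continuous on all of $[-1,1]$, by proving~\eqref{vf*0} for $f_\rho(x)=f(\rho x)$ and letting $\rho\to1$ (with a pointer to~\cite[p.~260]{DeLo}); after this reduction $f'$ is bounded on $J$, and the hypotheses of Lemma~\ref{lem2v} are immediate, so the endpoint integrability analysis you anticipate simply never arises. Your alternative route --- descending from $\vp^{r-1/p}f^{(r)}\in L_p(J)$ to $\vp^{1-1/p}f'\in L_p(J)$ by weighted Hardy-type estimates near $\pm1$ --- is in fact correct and fillable: setting $u=1\mp x$ and $F(u)=f^{(k)}(1\mp u)$, one H\"older step gives $u^{b}\bigl|\int_u^1 F'\bigr|\lesssim u^{1/2-1/p}\Vert u^{b+1/2}F'\Vert_{L_p(0,1)}$ whenever $(b+1/2)p'>1$, and $u^{1/2-1/p}\in L_p(0,1)$ for every $p\ge1$ (the case $p=1$ follows from Fubini), so one can descend one order at a time since the weight exponent drops by $1/2$ in $u$ per order. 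But you only announce this step --- indeed you flag it as the most delicate point --- so as submitted the proposal contains a declared gap exactly where the paper's dilation trick makes the difficulty evaporate. Either execute that Hardy computation or adopt the paper's reduction; with either repair the rest of your argument is complete and matches the paper line for line.
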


\begin{proof}

A simple observation is that it is sufficient to derive~\eqref{vf*0} for functions $f$ with an absolutely continuous derivative $f^{(r-1)}$ on $[-1,1]$. Indeed, first we can prove~\eqref{vf*0} for the functions $f_\rho(x)=f(\rho x)$ and then to take limit as $\rho\to 1$ (see also~\cite[p.~260]{DeLo}).

Thus, by Lemma~\ref{lem2v}, we have
\begin{equation}\label{vf*1+}
  \Vert f^*-\mathcal{V}_nf^*\Vert_{L_p(\T)}\le C n^{-1}E((f^*)',\Tp_n)_{L_p(\T)}.
\end{equation}
Let $P_{n-1}\in \mathcal{P}_{n-1}$ be such that
$$
\Vert f'-P_{n-1}\Vert_{L_{p,w\vp}(J)}=E_{n-1}^\Pp(f')_{L_{p,w\vp}(J)}.
$$
Then, using Lemma~\ref{J} and Lemma~\ref{mmm}, we derive
\begin{equation}\label{vf*2+}
  \begin{split}
     E((f^*)',\Tp_n)_{L_p(\T)}&\le \Vert \sin\phi f'(\cos\phi)-\sin\phi P_{n-1}(\cos\phi)\Vert_{L_p(\T)}\\
&=\Vert (f'-P_{n-1})\vp\Vert_{L_{p,w}(J)}=E_{n-1}^\Pp(f')_{L_{p,w\vp}(J)}\\
&\le Cn^{-r+1}\w_{\nu}^\vp(f^{(r)},n^{-1})_{p,w\vp^{r}}\\
&\le Cn^{-r+1} \w_{\nu,r-1/p}^{*\vp} (f^{(r)},n^{-1})_{p}.
   \end{split}
\end{equation}
Combining~\eqref{vf*1+} and~\eqref{vf*2+}, we get~\eqref{vf*0}.
\end{proof}

Now let us consider the multidimensional analogue of~\eqref{V1}.
Let  $f:\R^2\to\R$ be $2\pi$-periodic in each variable. We introduce the bivariate de la Vall\'ee-Poussin type interpolation operator by
$$
\V_{m,n} f(\phi,\psi)=\V_{m,n} (f,\phi,\psi)=\frac1{9mn}\sum_{(k,l)\in {\I}_{m,n}'} f(\phi_k,\psi_l) K_m(\phi-\phi_k) K_n(\psi-\psi_l),
$$
where
$$
\phi_k=\frac{\pi k}{3m},\quad \psi_l=\frac{\pi l}{3n}, \quad k=0,\dots,6m-1,\quad l=0,\dots,6n-1.
$$

We also need the following auxiliary operators:
$$
\V_{m,\infty}f(\phi,\psi)=\V_{m,\infty}(f,\phi,\psi)=\V_m(f(\cdot,\psi),\phi),
$$
$$
\V_{\infty,n}f(\phi,\psi)=\V_{\infty,n}(f,\phi,\psi)=\V_n(f(\phi,\cdot),\psi).
$$
It is easy to see that $\V_{m,n}(f,\phi,\psi)=\V_{m,\infty}(\V_{\infty,n}f,\phi,\psi)$.

In what follows, we  pass from the algebraic case of functions on $J^2$ to the case of periodic functions on $\T^2\simeq[0,2\pi)^2$, by using the standard substitution
$$
f^*(\phi,\psi)=f(\cos\phi,\cos\psi).
$$
Everywhere below we also denote $I=[0,\pi)$.

\begin{lemma}\label{pass}
Let $f\in B(J^2)$, $1\le p<\infty$, and $(m,n)\in \Np^2$. Then
\begin{equation*}
  \begin{split}
    \Vert f-&{\L}_{m,n}f\Vert_{L_{p,w}(J^2)}\\
&\le C_pK_p(m,n)\(\Vert f^*- \V_{\lfloor\frac{m}{8}\rfloor,\lfloor\frac{n}{8}\rfloor}f^*\Vert_{L_{p}(\T^2)}+\Vert f^*- \V_{m,n}f^*\Vert_{L_{p}(\T^2)}\).
   \end{split}
\end{equation*}
\end{lemma}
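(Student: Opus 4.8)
The plan is to pass to the torus via the substitution $f^*(\phi,\psi)=f(\cos\phi,\cos\psi)$ and then apply the abstract Theorem~\ref{th1} in the trigonometric setting, using the two Marcinkiewicz--Zygmund inequalities of Lemma~\ref{lemmzT} and Lemma~\ref{lemmz2T} together with the de la Vall\'ee--Poussin operator $\V_{m,n}$ as the intermediate approximant $\L_\n'$. First I would record that the change of variables $x=\cos\phi$, $y=\cos\psi$ turns the Chebyshev weight $w$ into the Jacobian of the substitution, so that for every admissible $g$ one has $\Vert g\Vert_{L_{p,w}(J^2)}=c_p\Vert g^*\Vert_{L_p(\T^2)}$, where $g^*$ is even and $2\pi$-periodic in each variable. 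In particular $\Vert f-\L_{m,n}f\Vert_{L_{p,w}(J^2)}=c_p\Vert f^*-(\L_{m,n}f)^*\Vert_{L_p(\T^2)}$, and $(\L_{m,n}f)^*\in \Tp_{m,n}^\mathcal{L}$ is precisely the trigonometric interpolant of $f^*$ at the nodes $X=\{(\pi k/m,\pi l/n):(k,l)\in \I_{m,n}\}$, i.e.\ at the torus images of ${\rm LC}_{m,n}$.

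Next I would invoke Theorem~\ref{th1} on $D=\T^2$ with $\Vert\cdot\Vert_p=\Vert\cdot\Vert_{L_p(\T^2)}$, applied to $f^*$, taking $\L_\n$ equal to the trigonometric interpolation operator in $\Tp_{m,n}^\mathcal{L}$ at the nodes $X$ (so that $\L_\n f^*=(\L_{m,n}f)^*$) with $\Pi_\n=\Tp_{m,n}^\mathcal{L}$, and $\L_\n'=\V_{m,n}$ with $\Pi_\n'=\Tp_{m,n}'$. The nodes and weights are $X_\n=X$, $\l_\k\equiv 1/(mn)$ and $X_\n'=\{(\pi k/(3m),\pi l/(3n)):(k,l)\in \I_{m,n}'\}$, $\l_\k'\equiv 1/(36mn)$. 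Then $X_\n\subset X_\n'$ since $\pi k/m=\pi(3k)/(3m)$ with $3k\le 3m\le 6m-1$, while $\Pi_\n\subset\Pi_\n'$ since $\G_{m,n}^\L\subset \I_{m,n}'$, and $\l_\k=36\,\l_\k'$, so $\g=36$. The required left-hand MZ inequality for $(X_\n,\La_\n,\Tp_{m,n}^\mathcal{L})$ with constant $M=C_pK_p(m,n)$ is Lemma~\ref{lemmzT}, the right-hand MZ inequality for $(X_\n',\La_\n',\Tp_{m,n}')$ with constant $M'=c_p$ is Lemma~\ref{lemmz2T}, and the matching condition~\eqref{MainInt} holds because at every node of $X$ both operators reproduce $f^*$: the first by interpolation on ${\rm LC}_{m,n}$, the second because each point of $X$ is a de la Vall\'ee--Poussin node, so $\V_{m,n}f^*$ reproduces $f^*$ there by item~2 of Lemma~\ref{lem1v} applied in each variable. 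Since $K_p(m,n)\ge 1$, Theorem~\ref{th1} yields
\begin{equation*}
  \Vert f^*-(\L_{m,n}f)^*\Vert_{L_p(\T^2)}\le C_pK_p(m,n)\(E(f^*,\Tp_{m,n}^\mathcal{L})_{L_p(\T^2)}+\Vert f^*-\V_{m,n}f^*\Vert_{L_p(\T^2)}\).
\end{equation*}

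It then remains to control the best-approximation term by the coarser de la Vall\'ee--Poussin mean. As $f^*$ is even in each variable, so is $\V_{\lfloor m/8\rfloor,\lfloor n/8\rfloor}f^*$, hence it is a cosine polynomial; by item~1 of Lemma~\ref{lem1v} its degrees are at most $4\lfloor m/8\rfloor-1\le m/2-1$ in $\phi$ and $4\lfloor n/8\rfloor-1\le n/2-1$ in $\psi$. For indices $(k,l)$ in this range $k/m+l/n<1$, so $\V_{\lfloor m/8\rfloor,\lfloor n/8\rfloor}f^*\in \Tp_{m,n}^\mathcal{L}$, which gives $E(f^*,\Tp_{m,n}^\mathcal{L})_{L_p(\T^2)}\le \Vert f^*-\V_{\lfloor m/8\rfloor,\lfloor n/8\rfloor}f^*\Vert_{L_p(\T^2)}$. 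Feeding this into the displayed estimate and converting back to $J^2$ by the first-step norm equivalence finishes the proof.

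I expect the main obstacle to be the index bookkeeping that makes both auxiliary means land in the correct spaces simultaneously: the finer grid $X_\n'$ must contain the images of the Lissajous nodes so that~\eqref{MainInt} is available, while the coarse mean of order $(\lfloor m/8\rfloor,\lfloor n/8\rfloor)$ must still produce a polynomial inside the narrow triangular index set $\G_{m,n}^\L$. The factor $8$ is forced by the degree bound $4N-1$ for $\V_N$, which pushes each partial degree below $m/2$ and $n/2$ respectively, exactly what is needed to keep $k/m+l/n<1$.
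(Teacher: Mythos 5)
Your proposal is correct and follows essentially the same route as the paper: pass to the torus via $f^*$, apply Theorem~\ref{th1} with the left-hand MZ inequality of Lemma~\ref{lemmzT} and the right-hand one of Lemma~\ref{lemmz2T} (with $\gamma=36$, $M=C_pK_p(m,n)$, $M'=c_p$), use $\V_{m,n}$ on the finer grid as the intermediate operator, and then absorb $E(f^*,\Tp_{m,n}^{\mathcal{L}})_{L_p(\T^2)}$ into the error of the coarse de la Vall\'ee--Poussin mean. The only (harmless) difference is in the last step: you verify directly from the degree bound $4\lfloor m/8\rfloor-1<m/2$ (and evenness) that $\V_{\lfloor m/8\rfloor,\lfloor n/8\rfloor}f^*\in\Tp_{m,n}^{\mathcal{L}}$, whereas the paper routes this through the intermediate space $\Tp_{\lfloor m/12\rfloor,\lfloor n/12\rfloor}'$; your direct inclusion is, if anything, cleaner.
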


\begin{proof}
Consider the interpolation polynomial
\begin{equation*}
\begin{split}
  \L_{m,n}^\mathcal{T} f^*(\phi,\psi)&=\sum_{(k,l)\in \mathcal{I}_{m,n}}f^*(\phi_{3k},\psi_{3l})\ell_{m,n}(\cos\phi,\cos\psi;x_k,y_l)\\
 &=\sum_{(k,l)\in \mathcal{I}_{m,n}}f(x_k,y_l)\ell_{m,n}(\cos\phi,\cos\psi;x_k,y_l).
\end{split}
\end{equation*}
It is clear that
$$
{\L}_{m,n}f(\cos\phi,\cos\psi)=\L_{m,n}^{\mathcal{T}}f^*(\phi,\psi)
$$
and
\begin{equation}\label{eq*}
  \Vert f-{\L}_{m,n}f\Vert_{L_{p,w}(J^2)}=\Vert f^*- \L_{m,n}^{\mathcal{T}}f^*\Vert_{L_{p}(I^2)}=\frac14\Vert f^*- \L_{m,n}^{\mathcal{T}}f^*\Vert_{L_{p}(\T^2)}.
\end{equation}
It is also easy to see that $\L_{m,n}^\mathcal{T} f^*(\phi_{3k},\psi_{3l})=f^*(\phi_{3k},\psi_{3l})=\V_{m,n}f^*(\phi_{3k},\psi_{3l})$ for any $(k,l)\in \I_{m,n}$, $\L_{m,n}^{\mathcal{T}}f^*\in \Tp_{m,n}^{\mathcal{L}}\subset \Tp_{m,n}'\ni \V_{m,n}f^*$, and $(\phi_{3k},\psi_{3l})_{(k,l)\in \I_{m,n}}\subset (\phi_k,\psi_l)_{(k,l)\in \I_{m,n}'}$. Thus, using Theorem~\ref{th1} with Lemmas~\ref{lemmzT} and~\ref{lemmz2T}, and taking into account~\eqref{eq*},
we derive
\begin{equation*}
  \begin{split}
    &\Vert f-{\L}_{m,n}f\Vert_{L_{p,w}(J^2)}=\frac14\Vert f^*-\L_{m,n}^{\mathcal{T}}f^*\Vert_{L_{p}(\T^2)}\\
&\le CK_p(m,n)\(E(f^*,\Tp_{m,n}^{\mathcal{L}})_{L_p(\T^2)}+\Vert f^*- \V_{m,n}f^*\Vert_{L_{p}(\T^2)}\)\\
&\le CK_p(m,n)\(E(f^*,\Tp_{\lfloor\frac{m}{12}],[\frac{n}{12}\rfloor}')_{L_p(\T^2)}+\Vert f^*- \V_{m,n}f^*\Vert_{L_{p}(\T^2)}\)\\
&\le CK_p(m,n)\(\Vert f^*- \V_{\lfloor\frac{m}{8}\rfloor,\lfloor\frac{n}{8}\rfloor}(f^*)\Vert_{L_{p}(\T^2)}+\Vert f^*- \V_{m,n}f^*\Vert_{L_{p}(\T^2)}\).
   \end{split}
\end{equation*}
The lemma is proved.
\end{proof}

\begin{lemma}\label{thVtrig1}
Under the conditions of Theorem~\ref{thL1}, we have for any $m,n\in \N$
\begin{equation*}\label{est1V}
\begin{split}
    \Vert f^*-\V_{m,n} f^*\Vert_{L_p(\T^2)}&\le C\bigg( {m^{-\frac1p-r}}\Vert V_{1,J} (\widetilde{D}^{(r,0)}f)\Vert_{L_{p,w}(J)}\\
&+{n^{-\frac1p-s}}\Vert V_{2,J} (\widetilde{D}^{(0,s)}f)\Vert_{L_{p,w}(J)}+m^{-\frac1p-r}n^{-\frac1p-s}H_{J^2}(\widetilde{D}^{(r,s)}f)\bigg),
\end{split}
\end{equation*}
where the constant $C$ does not depend on $m$, $n$, and $f$.
\end{lemma}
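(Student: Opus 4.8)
The plan is to exploit the tensor-product structure $\V_{m,n}=\V_{m,\infty}\V_{\infty,n}$ recorded just before Lemma~\ref{pass} and to split the error accordingly. Writing $A=I-\V_{m,\infty}$ and $B=I-\V_{\infty,n}$, which commute since they act in different variables, one has $\V_{m,\infty}\V_{\infty,n}=(I-A)(I-B)$, hence
\[
 f^*-\V_{m,n}f^*=Af^*+Bf^*-ABf^*.
\]
I would bound the three summands separately: the pure terms $Af^*$ and $Bf^*$ will produce the first two terms on the right-hand side, and the mixed term $ABf^*$ will produce the Hardy--Krause term. Throughout I use the identities $\partial_\phi^r f^*=(\widetilde{D}^{(r,0)}f)^*$, $\partial_\psi^s f^*=(\widetilde{D}^{(0,s)}f)^*$, and $\partial_\phi^r\partial_\psi^s f^*=(\widetilde{D}^{(r,s)}f)^*$, which are immediate from the definition of $\widetilde{D}^{(r,s)}f$, together with the fact that as $\phi$ runs over $\T$ the map $\phi\mapsto\cos\phi$ covers $J$ twice, so that $V_{\T,\phi}[(\widetilde{D}^{(r,0)}f)^*(\cdot,\psi)]=2\,V_{1,J}(\widetilde{D}^{(r,0)}f)(\cos\psi)$, and similarly in $\psi$.

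The basic tool is the one-dimensional estimate: if $g^{(r-1)}$ is absolutely continuous on $\T$ and $g^{(r)}\in BV(\T)$, then $\|g-\V_m g\|_{L_p(\T)}\le C m^{-r-1/p}V_\T(g^{(r)})$. This follows from Lemma~\ref{lem2vE} together with the one-sided approximation inequalities $\widetilde{E}(g,\Tp_m)_p\le Cm^{-1}E(g',\Tp_m)_p$ and $\widetilde{E}(h,\Tp_m)_p\le Cm^{-1/p}V_\T(h)$ from~\cite{SP} (already used in Lemma~\ref{lem2v}), after iterating the ordinary Jackson inequality to peel off $r$ derivatives and using $E\le\widetilde{E}$. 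For the term $Af^*$, I fix $\psi$ and apply this to $g=f^*(\cdot,\psi)$, whose $r$th derivative is $(\widetilde{D}^{(r,0)}f)^*(\cdot,\psi)$; this gives $\|Af^*(\cdot,\psi)\|_{L_p(\T_\phi)}\le Cm^{-r-1/p}V_{1,J}(\widetilde{D}^{(r,0)}f)(\cos\psi)$. Raising to the $p$th power, integrating in $\psi$ over $\T$, and substituting $y=\cos\psi$ (so that $w(y)^p=(1-y^2)^{-1/2}$) yields $\|Af^*\|_{L_p(\T^2)}\le Cm^{-r-1/p}\|V_{1,J}(\widetilde{D}^{(r,0)}f)\|_{L_{p,w}(J)}$. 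The symmetric argument in $\psi$ gives the analogous bound for $Bf^*$ with $n$, $s$, and $V_{2,J}(\widetilde{D}^{(0,s)}f)$.

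For the mixed term I would first integrate in $\phi$ for each fixed $\psi$, using the one-dimensional estimate and the commutation $\partial_\phi^r\V_{\infty,n}=\V_{\infty,n}\partial_\phi^r$ (valid since $\V_{\infty,n}$ acts only in $\psi$), which gives $\partial_\phi^r Bf^*=(I-\V_{\infty,n})(\widetilde{D}^{(r,0)}f)^*$; thus
\[
 \|ABf^*\|_{L_p(\T^2)}^p\le Cm^{-rp-1}\int_\T\big(V_{\T,\phi}[(I-\V_{\infty,n})(\widetilde{D}^{(r,0)}f)^*(\cdot,\psi)]\big)^p\,d\psi.
\]
To estimate the inner variation I would, for each partition $\{\phi_i\}$ of $\T$, apply the triangle inequality in $L_p(\T_\psi)$ to $\sum_i|(I-\V_{\infty,n})[u(\phi_{i+1},\cdot)-u(\phi_i,\cdot)]|$, where $u=(\widetilde{D}^{(r,0)}f)^*$, and then apply the one-dimensional estimate in the $\psi$-direction to each increment, using $\partial_\psi^s u=(\widetilde{D}^{(r,s)}f)^*$. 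This bounds the $L_p(\psi)$-norm of the partition sum by $Cn^{-s-1/p}\sum_i V_{\T,\psi}[(\widetilde{D}^{(r,s)}f)^*(\phi_{i+1},\cdot)-(\widetilde{D}^{(r,s)}f)^*(\phi_i,\cdot)]$. The combinatorial point is that, after the substitution $x=\cos\phi$, $y=\cos\psi$, the supremum over $\{\phi_i\}$ of this last sum equals, up to an absolute constant from the double covering of $J$ by $\T$, the Vitali/Hardy--Krause mixed variation $H_{J^2}(\widetilde{D}^{(r,s)}f)$.

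The hard part will be the interchange of the total-variation supremum in $\phi$ with the $L_p$-integral in $\psi$: the previous step bounds $\|S_\Pi\|_{L_p(\psi)}$ uniformly over partitions $\Pi$, whereas what is needed is $\|\sup_\Pi S_\Pi\|_{L_p(\psi)}=\|V_{\T,\phi}[\cdots]\|_{L_p(\psi)}$. I would resolve this by choosing a single countable refining sequence of partitions whose union is dense and contains the (at most countably many) jump locations, so that the partial variation sums increase pointwise a.e.\ to $V_{\T,\phi}[\cdots](\psi)$; monotone convergence then upgrades the uniform bound on $\|S_\Pi\|_{L_p}$ to the desired bound $\|V_{\T,\phi}[\cdots]\|_{L_p(\psi)}\le Cn^{-s-1/p}H_{J^2}(\widetilde{D}^{(r,s)}f)$. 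Combining the three estimates gives the claimed inequality; the regularity needed to differentiate $f^*$ up to order $r$ in $\phi$ and order $s$ in $\psi$, and the bounded variation of the resulting derivatives, are furnished by the hypothesis $\widetilde{D}^{(r,s)}f\in HBV(J^2)$ (with the bound being vacuous on any slice where the relevant variation is infinite).
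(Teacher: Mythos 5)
Your proposal is correct and coincides in substance with the paper's own proof: the paper passes to the periodic setting via $x=\cos\phi$, $y=\cos\psi$ and then establishes the estimate by invoking Lemma~\ref{lem2v}, item~2, and ``repeating the proofs of the main results from~\cite{PrT}'' for the operator $\V_{m,n}$ on $I^2$, and your decomposition $f^*-\V_{m,n}f^*=Af^*+Bf^*-ABf^*$ --- with the univariate one-sided/BV estimates built from Lemma~\ref{lem2vE} and~\cite{SP} for the pure terms, the Vitali-variation bookkeeping for the mixed term, and the variation identities under the double covering of $J$ by $\T$ --- is precisely that \cite{PrT}-type argument (the same tensor splitting the paper uses explicitly in the proof of Lemma~\ref{thVtrig2}). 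You simply execute in full the details the paper outsources to the citation, including the monotone-convergence interchange of the variation supremum with the $L_p$-integral via refining partitions containing the countably many jump abscissas, which is handled at least as carefully as in the paper itself.
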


\begin{proof}
From the conditions of the lemma, it follows that $(f^*)^{(r,s)}\in HBV(I^2)$. Thus, using Lemma~\ref{lem2v}, item 2, and repeating the proofs of the main results from~\cite{PrT} for the interpolation operator  $\V_{m,n} f^*$ on $I^2$, we obtain
\begin{equation}\label{trigVV}
\begin{split}
    \Vert f^*-&\V_{m,n} f^*\Vert_{L_p(\T^2)}=4\Vert f^*-\V_{m,n} f^*\Vert_{L_p(I^2)}\\
&\le C\bigg( {m^{-\frac1p-r}}\Vert V_{1,I} ((f^*)^{(r,0)})\Vert_{L_p(I)}\\
&\quad\quad\quad\quad+{n^{-\frac1p-s}}\Vert V_{2,I} ((f^*)^{(0,s)})\Vert_{L_p(I)}+m^{-\frac1p-r}n^{-\frac1p-s}H_{I^2}((f^*)^{(r,s)})\bigg).
\end{split}
\end{equation}
Using the substitution $x=\cos \phi$ and $y=\cos\psi$, it is easy to see that
\begin{equation}\label{vf*1}
  \Vert V_{1,I} ((f^*)^{(r,0)})\Vert_{L_p(I)}=\Vert V_{1,J} (\widetilde{D}^{(r,0)}f)\Vert_{L_{p,w}(J)},
\end{equation}
\begin{equation}\label{vf*2}
\Vert V_{2,I} ((f^*)^{(0,s)})\Vert_{L_p(I)}=\Vert V_{2,J} (\widetilde{D}^{(0,s)}f)\Vert_{L_{p,w}(J)},
\end{equation}
and
$$
H_{I^2}((f^*)^{(r,s)})=H_{J^2}(\widetilde{D}^{(r,s)}f).
$$

Thus, combining the above three inequalities and~\eqref{trigVV}, we proved the lemma.
\end{proof}

A sharper result can be obtained in terms of moduli of smoothness but only for smooth functions.

\begin{lemma}\label{thVtrig2}
Under the conditions of Theorem~\ref{thL2}, we have for any $m>r$ and $n>s$
\begin{equation*}
\begin{split}
    \Vert f^*-&\mathcal{V}_{m,n} f^*\Vert_{L_{p}(\T^2)}\le C\Bigg( {m^{-r}}\w_{\nu,r-1/p}^{\vp,1}\(f^{(r,0)},m^{-1}\)_{p}
    \\
    &+{n^{-s}}\w_{\nu,s-1/p}^{\vp,2}\(f^{(0,s)},n^{-1}\)_{p}
+{m^{-r}}n^{-s}\w_{\nu,r-1/p,s-1/p}^{\vp}\(f^{(r,s)},m^{-1},n^{-1}\)_{p}\Bigg),
\end{split}
\end{equation*}
where the constant $C$ does not depend on $m$, $n$, and $f$.
\end{lemma}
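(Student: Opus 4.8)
The plan is to exploit the tensor-product structure $\V_{m,n}=\V_{m,\infty}\V_{\infty,n}=\V_{\infty,n}\V_{m,\infty}$, where the two factors act on different variables and therefore commute (this factorization was already recorded above). First I would apply to $f^*$ the operator identity
$$
I-\V_{m,\infty}\V_{\infty,n}=(I-\V_{m,\infty})+(I-\V_{\infty,n})-(I-\V_{m,\infty})(I-\V_{\infty,n})
$$
and estimate the three resulting terms in $L_p(\T^2)$ by the triangle inequality. These three terms will produce precisely the three summands on the right-hand side of the claimed bound.

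For the first (pure) term $\Vert (I-\V_{m,\infty})f^*\Vert_{L_p(\T^2)}$ I would freeze the second variable: for a.e.\ $\psi$ the slice $\phi\mapsto f^*(\phi,\psi)$ equals $g_\psi^*$ with $g_\psi(x)=f(x,\cos\psi)$, and $\V_{m,\infty}$ acts on it as the univariate operator $\V_m$. The smoothness hypotheses of Theorem~\ref{thL2} (together with Fubini applied to $\vp_1^rf^{(r,0)}\in L_{p,w}(J^2)$) guarantee that for a.e.\ $\psi$ the slice $g_\psi$ satisfies the assumptions of Lemma~\ref{lemVf*}, so
$$
\Vert g_\psi^*-\V_m g_\psi^*\Vert_{L_p(\T)}\le Cm^{-r}\w_{\nu,r-1/p}^{*\vp}\big(f^{(r,0)}(\cdot,\cos\psi),m^{-1}\big)_p.
$$
Raising this to the $p$-th power, integrating in $\psi$ over $\T$, and substituting $y=\cos\psi$ (so that $\mathrm{d}\psi$ becomes, up to a constant, $\mathrm{d}y/\sqrt{1-y^2}$) reproduces, up to a constant, the partial averaged modulus $\w_{\nu,r-1/p}^{*\vp,1}(f^{(r,0)},m^{-1})_p$; the elementary bound $\w^{*\vp}\le\w^{\vp}$ (the last inequality in Lemma~\ref{mmm}, in its partial form) then yields the ordinary partial modulus $\w_{\nu,r-1/p}^{\vp,1}$. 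The second term is handled symmetrically by freezing the first variable and applying $\V_n$ in the second, using $f^{(0,s-1)}(x,\cdot)\in AC_{\rm loc}$ and $\vp_2^sf^{(0,s)}\in L_{p,w}(J^2)$.

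The heart of the argument is the mixed term $\Vert(I-\V_{m,\infty})(I-\V_{\infty,n})f^*\Vert_{L_p(\T^2)}$. Here I would use that $\V_{\infty,n}$ operates solely on the second variable, so it commutes with the first-variable difference operator $\D_{h\vp_1}^{\nu,1}$ and with passage to the algebraic $r$-th derivative in the first variable. Applying the univariate estimate of Lemma~\ref{lemVf*} in the first variable to $(I-\V_{\infty,n})f^*$ and commuting these operators, the first-variable derivative falls onto $f^{(r,0)}$ while the factor $I-\V_{\infty,n}$ survives in the second variable; iterating the same univariate estimate in the second variable, now applied to $f^{(r,s)}$, produces the extra $n^{-s}$ and the $s$-th difference in $\psi$. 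Collecting the two substitutions $x=\cos\phi$, $y=\cos\psi$ and the associated Chebyshev weights, this assembles into $m^{-r}n^{-s}\w_{\nu,r-1/p,s-1/p}^{*\vp}(f^{(r,s)},m^{-1},n^{-1})_p$, which is dominated by the ordinary mixed modulus $\w_{\nu,r-1/p,s-1/p}^{\vp}$.

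I expect the mixed term to be the main obstacle. One must justify the commutation of $\V_{\infty,n}$ with the first-variable operations on the rough slices, carry out the iteration cleanly while tracking the two weights generated by the substitutions, and verify that the two one-dimensional averaged moduli combine exactly into the bivariate averaged mixed modulus before invoking the bivariate form of $\w^{*\vp}\le\w^{\vp}$. The ``or'' in the hypotheses of Theorem~\ref{thL2} is exactly what secures the iterated regularity and a.e.\ slice conditions needed for this double application of Lemma~\ref{lemVf*}.
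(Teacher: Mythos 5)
Your proposal matches the paper's proof essentially step for step: the same inclusion--exclusion decomposition $I-\V_{m,n}=(I-\V_{m,\infty})+(I-\V_{\infty,n})-(I-\V_{m,\infty})(I-\V_{\infty,n})$, the pure terms handled slicewise by Lemma~\ref{lemVf*} with Fubini and the Chebyshev substitution, and the mixed term treated exactly as in the paper's estimate~\eqref{B1} by commuting $\V_{\infty,n}$ with the first-variable difference operator, iterating the univariate bound onto $f^{(r,s)}$, and finishing with Lemma~\ref{mmm}. The only cosmetic difference is that the paper fixes one branch of the ``or'' hypothesis at the outset rather than invoking it at the end, so no substantive gap remains.
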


\begin{proof}
It is sufficient to consider the case $f^{(0,s-1)}(x,\cdot), f^{(r-1,s)}(\cdot,y)\in AC_{\rm loc}(J)$ for a.e. $x,y\in J$.
Denote
$$
f^*-\B_{m,n}f^*=(I-\V_{m,\infty})(f^*-\V_{\infty,n}f^*).
$$
By Lemma~\ref{lemVf*}, Fubini's theorem, and Lemma~\ref{mmm}, we obtain
\begin{equation}\label{B1}
  \begin{split}
    &\Vert f^*-\B_{m,n}(f^*)\Vert_{L_p(\T^2)}^p\\
&=\int_0^{2\pi} \Vert (I-\V_{m,\infty})(f^*(\cdot,\psi)-\V_{\infty,n}f^*(\cdot,\psi))\Vert_{L_p(\T)}^p {\rm d}\psi\\
&\le Cm^{-rp}\int_0^{2\pi} \w_{\nu,r-1/p}^{*\vp}( f^{(r,0)}(\cdot,\psi)-\V_{\infty,n}(f^{(r,0)}(\cdot,\psi)),m^{-1})_p^p {\rm d}\psi\\
&=Cm^{-rp+1}\int_0^{2\pi} {\rm d}\psi\int_0^{1/m}{\rm d}\tau_1\\
&\quad\quad\times\int_{\mathcal{D}_{\nu\tau_1}} \left| \mathcal{W}_{\nu\tau_1}^{r-1/p}(x_1)\D_{\tau_1\vp(x_1)}^{\nu,1} \(f^{(r,0)}(x_1,\cos\psi)-\V_{\infty,n} (f^{(r,0)}(x_1,\cdot),\cos\psi)\)\right|^p {\rm d}x_1\\
&=Cm^{-rp+1}\int_0^{1/m}\int_{\mathcal{D}_{\nu\tau_1}} |\mathcal{W}_{\nu\tau_1}^{r-1/p}(x_1)|^p {\rm d}x_1 {\rm d}\tau_1\\
&\quad\quad\times \int_0^{2\pi}  \left|\D_{\tau_1\vp(x_1)}^{\nu,1} f^{(r,0)}(x_1,\cos\psi)-\V_{\infty,n} (\D_{\tau_1\vp(x_1)}^{\nu,1}f^{(r,0)}(x_1,\cdot),\cos\psi)\right|^p{\rm d}\psi \\
&\le Cm^{-rp+1}\int_0^{1/m}\int_{\mathcal{D}_{\nu\tau_1}} |\mathcal{W}_{\nu\tau_1}^{r-1/p}(x_1)|^p {\rm d}x_1 {\rm d}\tau_1\\
&\quad\quad\times n^{-sp+1}\int_0^{1/n}\int_{\mathcal{D}_{\nu\tau_2}}
|\mathcal{W}_{\nu\tau_2}^{s-1/p}(x_2) \D_{\tau_1\vp(x_1)}^{\nu,1}\D_{\tau_2\vp(x_2)}^{\nu,2}f^{(r,s)}(x_1,x_2)|^p {\rm d}x_2 {\rm d}\tau_2\\
&\le C m^{-rp}n^{-sp}\w_{\nu,r-1/p,s-1/p}^\vp(f^{(r,s)},n^{-1},m^{-1})_p^p.
  \end{split}
\end{equation}
By analogy, we obtain the following estimates
\begin{equation}\label{B2}
\Vert f^*-\V_{m,\infty}f^*\Vert_{L_p(\T^2)}\le Cm^{-r}\w_{\nu,r-1/p}^{\vp,1}(f^{(r,0)},m^{-1})_p,
\end{equation}
\begin{equation}\label{B3}
\Vert f^*-\V_{\infty,n}f^*\Vert_{L_p(\T^2)}\le Cn^{-s}\w_{\nu,s-1/p}^{\vp,2}(f^{(0,s)},n^{-1})_p.
\end{equation}

Finally, taking into account that
$$
f^*-\V_{m,n}f^*=f^*-\V_{m,\infty}f^*+f-\V_{\infty,n}f^*+\B_{m,n}f^*-f^*
$$
and combining~\eqref{B1}--\eqref{B3}, we proved the lemma.

\end{proof}

To obtain estimates of the error of approximation under less restrictive conditions on the function, we use another approach based on the following representation (see also~\cite{PrT})
\begin{equation}\label{rep1}
\begin{split}
  f^*(\phi,\psi)-&\V_{m,n}f^*(\phi,\psi)\\
&=f^*(\phi,\psi)-\V_{m,\infty}f^*(\phi,\psi)+\V_{m,\infty}\(f^*(\cdot,\psi)-\V_{\infty,n}f^*(\cdot,\psi)\)(\phi).
\end{split}
\end{equation}

\begin{lemma}\label{thV+}
Under the conditions of Theorem~\ref{thL+}, we have for any $m>r$, $n>s$
\begin{equation}\label{v+1}
\begin{split}
    \Vert f^*-\V_{m,n}f^*\Vert_{L_p(\T^2)}\le C\Big(m^{-r} &E_{m-r,\infty}^\Pp(f^{(r,0)})_{L_{p,\vp_1^r w}(J^2)}\\
&+n^{-s} \Vert \{E_{n-s}^\Pp(f^{(0,s)}(x_k,\cdot))_{L_{p,\vp_2^s w}(J)}\}\Vert_{\widetilde{\ell}_p^{6m}}\Big),
\end{split}
\end{equation}
where the constant $C$ does not depend on $m$, $n$, and $f$.
\end{lemma}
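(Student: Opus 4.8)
The plan is to start from the splitting~\eqref{rep1}, which writes $f^*-\V_{m,n}f^*$ as the sum of the first-variable error $f^*-\V_{m,\infty}f^*$ and the smoothed residual $\V_{m,\infty}\(f^*(\cdot,\psi)-\V_{\infty,n}f^*(\cdot,\psi)\)$, and to estimate the two terms separately and then add them by the triangle inequality. The single tool I would prepare first is the one-dimensional estimate
\begin{equation*}
  \Vert g^*-\V_\mu g^*\Vert_{L_p(\T)}\le C\mu^{-\rho}E_{\mu-\rho}^\Pp(g^{(\rho)})_{L_{p,\vp^\rho w}(J)},\qquad \mu>\rho,
\end{equation*}
valid whenever $g^{(\rho-1)}\in AC_{\rm loc}(J)$ and $\vp^\rho g^{(\rho)}\in L_{p,w}(J)$. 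This is exactly the chain already used at the start of the proof of Lemma~\ref{lemVf*}: item~1 of Lemma~\ref{lem2v} reduces the left-hand side to $C\mu^{-1}E_{\mu-1}^\Pp(g')_{L_{p,\vp w}(J)}$, and then the iterated Jackson inequality~\eqref{Je2} of Lemma~\ref{J} (applied to $g'$ to order $\rho-1$ with weight exponent $\a=1-1/p$, noting $\vp^{1-1/p}=\vp w$ and $\vp^{\rho-1/p}=\vp^\rho w$) lowers the order from $1$ to $\rho$. As in Lemma~\ref{lemVf*}, the reduction from $AC_{\rm loc}$ to genuine absolute continuity on $[-1,1]$ is handled by a dilation argument.

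For the first term I would apply Fubini in $\psi$, so that $\Vert f^*-\V_{m,\infty}f^*\Vert_{L_p(\T^2)}^p=\int_0^{2\pi}\Vert f^*(\cdot,\psi)-\V_m(f^*(\cdot,\psi))\Vert_{L_p(\T)}^p\,{\rm d}\psi$; for a.e.\ $\psi$ the slice $f(\cdot,\cos\psi)$ satisfies the hypotheses of the one-dimensional estimate (by the slice assumptions of Theorem~\ref{thL+} together with $\vp_1^rf^{(r,0)}\in L_{p,w}(J^2)$), so the integrand is bounded by $Cm^{-rp}E_{m-r}^\Pp(f^{(r,0)}(\cdot,\cos\psi))_{L_{p,\vp^r w}(J)}^p$. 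The decisive step is then the change of variables $y=\cos\psi$: writing $\int_0^{2\pi}=2\int_0^\pi$ and using ${\rm d}\psi=\vp(y)^{-1}{\rm d}y$ turns the integral into $2\int_J E_{m-r}^\Pp(f^{(r,0)}(\cdot,y))_{L_{p,\vp^r w}(J)}^p\,\vp(y)^{-1}{\rm d}y$, and since $\vp_1^r w=\vp(x)^{r-1/p}\vp(y)^{-1/p}$ the factor $\vp(y)^{-1}$ supplied by the Jacobian is precisely the $y$-part of the weight $(\vp_1^r w)^p$. Hence this integral equals $2\,E_{m-r,\infty}^\Pp(f^{(r,0)})_{L_{p,\vp_1^r w}(J^2)}^p$, because the best approximation in $\Pp_{m-r,\infty}(L_{p,\vp_1^r w})$ is realized slicewise in the second variable, and the first term contributes the required $Cm^{-r}E_{m-r,\infty}^\Pp(f^{(r,0)})_{L_{p,\vp_1^r w}(J^2)}$.

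For the second term I would use the $L_p$-boundedness of $\V_{m,\infty}$ on its nodes: by Fubini and the discrete bound~\eqref{newnew} applied in the first variable, $\Vert\V_{m,\infty}(f^*-\V_{\infty,n}f^*)\Vert_{L_p(\T^2)}^p\le \frac{C}{m}\sum_{k=0}^{6m-1}\Vert f^*(\phi_k,\cdot)-\V_n(f^*(\phi_k,\cdot))\Vert_{L_p(\T)}^p$, where $\phi_k=\pi k/(3m)$ and $x_k=\cos\phi_k$. Applying the one-dimensional estimate now in the \emph{second} variable to each slice $f(x_k,\cdot)$ (whose hypotheses are guaranteed by the node assumptions $\vp_2^sf^{(0,s)}(x_k,\cdot)\in L_{p,w}(J)$ and $f^{(0,s-1)}(x,\cdot)\in AC_{\rm loc}(J)$) bounds each summand by $Cn^{-sp}E_{n-s}^\Pp(f^{(0,s)}(x_k,\cdot))_{L_{p,\vp_2^s w}(J)}^p$; rewriting $\frac1m\sum_{k=0}^{6m-1}=6\cdot\frac1{6m}\sum_{k=0}^{6m-1}$ in terms of the $\widetilde{\ell}_p^{6m}$-norm then yields the second summand of~\eqref{v+1}, and summing the two contributions gives the claim.

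I expect the main obstacle to be the weight bookkeeping in the first term: one must verify that the Jacobian $\vp(y)^{-1}$ of the substitution $y=\cos\psi$ matches exactly the $y$-dependence of the weight $\vp_1^r w$, and that the slicewise best-approximation quantities reassemble into the genuinely bivariate error $E_{m-r,\infty}^\Pp(f^{(r,0)})_{L_{p,\vp_1^r w}(J^2)}$ (which relies on the fact that approximation by $\Pp_{m-r,\infty}(L_{p,u})$ splits over the second variable). The second term is comparatively routine once~\eqref{newnew} and the one-dimensional estimate are in hand, the only asymmetry being that the $x$-variable is sampled discretely rather than integrated, which is why its slice weight carries no extra $\vp(x_k)$ factor.
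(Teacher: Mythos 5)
Your proof follows the paper's argument essentially verbatim: the same decomposition~\eqref{rep1} with the triangle inequality, the same two-step reduction of each term via Lemma~\ref{lem2v}, item 1, followed by the iterated Jackson inequality~\eqref{Je2} in the chain of~\eqref{vf*2+}, and the identical treatment of the second term via the discrete bound~\eqref{newnew} and the slicewise one-dimensional estimate at the nodes $x_k$. The only cosmetic difference is in the first term, where you argue slicewise via Fubini and the substitution $y=\cos\psi$ while the paper works directly with a single bivariate near-best approximant $P_{m-1,\infty}\in \Pp_{m-1,\infty}(L_{p,w})$; note that your claimed \emph{equality} between the integrated slicewise errors and $E_{m-r,\infty}^\Pp(f^{(r,0)})_{L_{p,\vp_1^r w}(J^2)}^p$ is stronger than needed (its reverse direction would require a measurable selection of slicewise best approximants), but your bound only uses the trivial direction --- every $g\in\Pp_{m-r,\infty}(L_{p,\vp_1^r w})$ restricts to a slicewise competitor --- so the argument is sound.
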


\begin{proof}
Analogously to the proof of Lemma~\ref{lemVf*}, it is sufficient to consider only the case $f^{(r-1,0)}(\cdot,y)$, $f^{(0,s-1)}(x,\cdot)\in AC([-1,1])$ for a.e. $x,y\in J$.

By~\eqref{rep1}, we have
\begin{equation}\label{pr1+v}
  \Vert f^*-\V_{m,n}f^*\Vert_{L_p(\T^2)}\le \Vert f^*-\V_{m,\infty}f^*\Vert_{L_p(\T^2)}+\Vert \V_{m,\infty}(f^*-\V_{\infty,n}f^*)\Vert_{L_p(\T^2)}.
\end{equation}
Using Lemma~\ref{lem2v}, item 1, we get
\begin{equation}\label{pr2+v}
  \Vert f^*-\V_{m,\infty}f^*\Vert_{L_p(\T^2)}\le Cm^{-1} E((f^*)^{(1,0)},\Tp_{m,\infty}(L_p))_{L_p(\T^2)},
\end{equation}
where $\Tp_{m,\infty}(L_p)$ is the class of functions $g$ such that $g\in L_p(\T^2)$ and $g$ is a trigonometric polynomial of degree at most $m$ in the first variable.

Let $P_{m-1,\infty}\in \Pp_{m-1,\infty}(L_{p,w})$ be such that
$$
\Vert f^{(1,0)}-P_{m-1,\infty}\Vert_{L_{p,w\vp_1}(J^2)}=E_{m-1,\infty}^\Pp(f^{(1,0)})_{L_{p,w\vp_1}(J^2)}.
$$
Then, by~\eqref{Je2}, it is easy to see that
\begin{equation}\label{P1inf}
  \begin{split}
    E((f^*)^{(1,0)},&\Tp_{m,\infty}(L_p))_{L_p(\T^2)}\\
&\le \Vert \sin\phi f^{(1,0)}(\cos\phi,\cos\psi)-\sin\phi P_{m-1,\infty}(\cos\phi,\cos\psi)\Vert_{L_p(\T^2)}\\
&\le CE_{m-1,\infty}^\Pp(f^{(1,0)})_{L_{p,w\vp_1}(J^2)}\le C n^{-r+1}E_{m-r,\infty}^\Pp(f^{(r,0)})_{L_{p,w\vp_1^{r}}(J^2)}.
  \end{split}
\end{equation}
Thus, combining~\eqref{pr2+v} and~\eqref{P1inf}, we get
\begin{equation}\label{P1inf1}
  \begin{split}
 \Vert f^*-\V_{m,\infty}f^*\Vert_{L_p(\T^2)}\le C n^{-r}E_{m-r,\infty}^\Pp(f^{(r,0)})_{L_{p,w\vp_1^{r}}(J^2)}.
  \end{split}
\end{equation}

Next, by~\eqref{newnew}, we derive
\begin{equation}\label{pr3+v}
  \begin{split}
    \Vert \V_{m,\infty}(f^*-\V_{\infty,n}f^*)\Vert_{L_p(\T^2)}^p&=\int_0^{2\pi} \Vert \V_{m,\infty}(f^*(\cdot,\psi)-\V_{\infty,n}f^*(\cdot,\psi))\Vert_{L_p(\T)}^p {\rm d}\psi\\
&\le C\int_0^{2\pi}\frac1m\sum_{k=0}^{6m-1}|f^*(\phi_k,\psi)-\V_{\infty,n}f^*(\phi_k,\psi)|^p {\rm d}\psi\\
&=\frac Cm\sum_{k=0}^{6m-1}\Vert f^*(\phi_k,\cdot)-\V_{\infty,n}f^*(\phi_k,\cdot)\Vert_{L_p(\T)}^p.
  \end{split}
\end{equation}
By Lemma~\ref{lem2v}, item 1, \eqref{Je2}, and~\eqref{vf*2+}, we get
\begin{equation}\label{pr4+v}
  \begin{split}
    \Vert f^*(\phi_k,\cdot)-\V_{\infty,n}f^*(\phi_k,\cdot)\Vert_{L_p(\T)} &\le Cn^{-1}
E_n^\Tp((f^*)'(\phi_k,\cdot))_{L_p(\T)}\\
&\le Cn^{-s} E_{n-s}^\Pp(f^{(0,s)}(x_k,\cdot))_{L_{p,\vp_2^s w}(J)}.
  \end{split}
\end{equation}

Finally, combining~\eqref{pr1+v}--\eqref{pr4+v}, we obtain~\eqref{v+1}.
\end{proof}

By analogy, using Lemma~\ref{lem2v}, item 2, and equalities~\eqref{vf*1} and~\eqref{vf*2}, one can prove the following result in terms of functions of bounded variation.

\begin{lemma}\label{thV++}
Under the conditions of Theorem~\ref{thL++}, we have for any $m,n\in \N$
\begin{equation*}\label{v++1}
\begin{split}
    \Vert f^*-&\V_{m,n}f^*\Vert_{L_p(\T^2)}\\
&\le C\({m^{-r-1/p}} \Vert V_{1,J}(\widetilde{D}^{(r,0)}f)\Vert_{L_{p,w}(\T)}+{n^{-s-1/p}} \Vert \{V_{2,J} (\widetilde{D}^{(0,s)}f(x_k,\cdot))\}\Vert_{\widetilde{\ell}_p^{6m}}\),
\end{split}
\end{equation*}
where the constant $C$ does not depend on $m$, $n$, and $f$.
\end{lemma}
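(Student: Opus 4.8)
The plan is to mirror the proof of Lemma~\ref{thV+}, replacing item~1 of Lemma~\ref{lem2v} (the smooth, best-approximation estimate) by item~2 (the bounded-variation estimate) and tracking total variations in place of errors of best approximation. Since $\widetilde{D}^{(r,0)}f(\cdot,y)\in BV(J)$ forces the lower-order $\phi$-derivatives of $f^*(\cdot,y)$ to be absolutely continuous when $r\ge 1$, no genuine smoothing reduction is needed and I would work with $f^*$ directly. First I would split the error by the representation~\eqref{rep1}, exactly as in~\eqref{pr1+v}:
\begin{equation*}
\Vert f^*-\V_{m,n}f^*\Vert_{L_p(\T^2)}\le \Vert f^*-\V_{m,\infty}f^*\Vert_{L_p(\T^2)}+\Vert \V_{m,\infty}(f^*-\V_{\infty,n}f^*)\Vert_{L_p(\T^2)},
\end{equation*}
and estimate the two summands separately.

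The engine for both terms is a one-dimensional estimate: if $g^{(r)}\in BV(\T)$, then $\Vert g-\V_m g\Vert_{L_p(\T)}\le Cm^{-r-1/p}V_\T(g^{(r)})$. I would derive it from Lemma~\ref{lem2vE}, which gives $\Vert g-\V_m g\Vert_{L_p(\T)}\le C\widetilde{E}(g,\Tp_m)_{L_p(\T)}$, followed by the one-sided Jackson chain already used in Lemma~\ref{lem2v}: reduce the one-sided error to the two-sided error of $g'$ via \cite[Theorem~8.1]{SP}, apply the (classical trigonometric analogue of) Jackson's inequality~\eqref{Je2} to pass from $g'$ down to $g^{(r)}$, and finish with the bounded-variation rate $E(g^{(r)},\Tp_m)_{L_p(\T)}\le\widetilde{E}(g^{(r)},\Tp_m)_{L_p(\T)}\le Cm^{-1/p}V_\T(g^{(r)})$ of \cite[Theorem~8.2]{SP}, so that the exponents assemble as $m^{-1}\cdot m^{-(r-1)}\cdot m^{-1/p}=m^{-r-1/p}$. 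For the first summand I would apply this slice-wise in $\psi$ to $g=f^*(\cdot,\psi)$, whose $r$-th $\phi$-derivative is $(f^*)^{(r,0)}(\cdot,\psi)\in BV(\T)$ for a.e. $\psi$; raising to the $p$-th power, integrating in $\psi$ by Fubini, and invoking~\eqref{vf*1} gives
\begin{equation*}
\Vert f^*-\V_{m,\infty}f^*\Vert_{L_p(\T^2)}\le Cm^{-r-1/p}\Vert V_{1,J}(\widetilde{D}^{(r,0)}f)\Vert_{L_{p,w}(J)}.
\end{equation*}

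For the second summand I would proceed as in~\eqref{pr3+v}, using the Nikolskii-type bound~\eqref{newnew} for $\V_m$ in the first variable:
\begin{equation*}
\Vert \V_{m,\infty}(f^*-\V_{\infty,n}f^*)\Vert_{L_p(\T^2)}^p\le \frac{C}{m}\sum_{k=0}^{6m-1}\Vert f^*(\phi_k,\cdot)-\V_n f^*(\phi_k,\cdot)\Vert_{L_p(\T)}^p,
\end{equation*}
where $\phi_k=\pi k/(3m)$, so that $x_k=\cos\phi_k$. Here the hypothesis $\widetilde{D}^{(0,s)}f(x_k,\cdot)\in BV(J)$ is used precisely at these $6m$ nodes, which is exactly why the statement carries a discrete $\widetilde{\ell}_p^{6m}$-norm rather than a genuine $L_p$-norm in the first variable. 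Applying the one-dimensional estimate above in the second variable to each slice $f^*(\phi_k,\cdot)$ and using the slice version of~\eqref{vf*2} yields $\Vert f^*(\phi_k,\cdot)-\V_n f^*(\phi_k,\cdot)\Vert_{L_p(\T)}\le Cn^{-s-1/p}V_{2,J}(\widetilde{D}^{(0,s)}f(x_k,\cdot))$; rewriting $\frac1m\sum_{k=0}^{6m-1}$ as the $\widetilde{\ell}_p^{6m}$-average then produces the second term on the right-hand side, and combining the two bounds completes the proof.

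The main obstacle I anticipate is the one-dimensional BV estimate with $r$ derivatives: item~2 of Lemma~\ref{lem2v} is stated only for $r=0$, so I must splice together the one-sided-to-two-sided reduction, the trigonometric Jackson inequality, and the bounded-variation best-approximation rate in the correct order, taking care that the exponent $r+1/p$ genuinely factors as $m^{-r}\cdot m^{-1/p}$ and that $E\le\widetilde{E}$ is used where only the two-sided quantity is available. The remaining points are routine: the Fubini interchange for the $BV$-valued slices, and the bookkeeping of the even $2\pi$-periodic extension relating $V_\T$, $V_I$, and the weighted variations $V_{1,J}$, $V_{2,J}$ through~\eqref{vf*1}--\eqref{vf*2}.
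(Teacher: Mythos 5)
Your proposal is correct and takes essentially the same route as the paper, which proves this lemma in a single line ``by analogy'' with Lemma~\ref{thV+}: the splitting~\eqref{rep1}, the discretization~\eqref{pr3+v} via~\eqref{newnew}, item~2 of Lemma~\ref{lem2v} in place of item~1, and the substitution identities~\eqref{vf*1}--\eqref{vf*2}. Your one-dimensional engine estimate $\Vert g-\V_m g\Vert_{L_p(\T)}\le Cm^{-r-1/p}V_\T(g^{(r)})$ --- assembled from Lemma~\ref{lem2vE}, the reduction $\widetilde{E}(g,\Tp_m)_{L_p(\T)}\le Cm^{-1}E(g',\Tp_m)_{L_p(\T)}$, the trigonometric Jackson inequality, and the bound $\widetilde{E}(g^{(r)},\Tp_m)_{L_p(\T)}\le Cm^{-1/p}V_\T(g^{(r)})$ from~\cite{SP} --- together with its slice-wise application at the nodes $\phi_k$ is precisely the detail the paper leaves implicit, and your exponent bookkeeping and use of the hypotheses at the $6m$ nodes are accurate.
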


\begin{remark}
The corresponding results symmetric to Lemma~\ref{thV+} and Lemma~\ref{thV++} can be obtained by using the equality
\begin{equation*}\label{rep2}
\begin{split}
  f^*(\vp,\psi)-&\V_{m,n}f^*(\vp,\psi)\\
&=f^*(\vp,\psi)-\V_{\infty,n}f^*(\vp,\psi)+\V_{\infty,n}\(f^*(\vp,\cdot)-\V_{m,\infty}f^*(\vp,\cdot)\)(\psi)
\end{split}
\end{equation*}
and repeating the proofs of these lemmas. See also Remarks~\ref{rem1} and~\ref{rem2}.
\end{remark}

\subsection{Proofs of the main theorems}

The proofs of Theorem~\ref{thL1}, Theorem~\ref{thL2}, Theorem~\ref{thL+}, and Theorem~\ref{thL++}   follow from Lemma~\ref{pass} and Lemmas~\ref{thVtrig1},~\ref{thVtrig2},~\ref{thV+}, and~\ref{thV++},  correspondingly. Let us also mention that the above Lemmas~\ref{thVtrig1}--\ref{thV++} are valid without the assumption $(m,n)\in \Np^2$.

\subsection{Final remarks}
Note that the above results with minor changes are also valid in the case of the Padua points~\cite{CMV}, the Xu points~\cite{Xu1996}, and the Lissajous-Chebyshev node points of the (non-degenerate) Lissajous curve~\cite{ErbKaethnerAhlborgBuzug2015}. To see this one only needs to note that the above sets of points are subsets of the Chebyshev grid $\{(x_k,y_k)\,:\, (k,l)\in \mathcal{I}_{m,n}'\}$. Then one can apply the corresponding auxiliary results from Section~5.

%

\end{document}